\def\set@curr@file#1{%
  \begingroup
    \escapechar\m@ne
    \xdef\@curr@file{\expandafter\string\csname #1\endcsname}%
  \endgroup
}
\def\quote@name#1{"\quote@@name#1\@gobble""}
\def\quote@@name#1"{#1\quote@@name}
\def\unquote@name#1{\quote@@name#1\@gobble"}
\newcommand{\ds}[1]{\displaystyle{#1}}
\newcommand{\ns}{\!\!}
\newcommand{\nss}{\ns\ns}
\newcommand{\nsss}{\nss\nss}
\newcommand{\qqquad}{\qquad\qquad}
\newcommand{\qqqquad}{\qqquad\qqquad}
\newcommand{\set}[1]{\mathcal{#1}}
\newcommand{\class}[1]{\mathscr{#1}}
\newcommand{\Haus}{\mathscr{H}}
\newcommand{\bs}{\setminus}
\newcommand{\haus}{\mathscr{H}}
\newcommand{\nats}{\mathbb{N}}
\newcommand{\whls}{\mathbb{W}}
\newcommand{\re}{\mathbb{R}}
\newcommand{\rem}{\re^m}
\newcommand{\ren}{\re^n}
\newcommand{\reo}{\ov{\re}}
\newcommand{\bll}{\set{B}}
\newcommand{\sph}{\set{S}}
\newcommand{\ann}{\set{A}}
\newcommand{\cnt}{\class{C}}
\newcommand{\smfrac}[2]{{\textstyle{\frac{#1}{#2}}}}
\newcommand{\ee}{\mathrm{e}}
\newcommand{\lint}[1]{\int\limits_{#1}}
\newcommand{\luint}[2]{\int\limits_{#1}^{#2}}
\newcommand{\flint}[1]{\fint\limits_{#1}}
\newcommand{\dd}{\mathrm{d}}
\newcommand{\der}[2]{\frac{\dd #1}{\dd #2}}
\newcommand{\grd}{\te{\nabla}}
\newcommand*\bdot{\mathpalette\bdot@{.65}}
\newcommand*\bdot@[2]{\mathbin{\vcenter{\hbox{\scalebox{#2}{$\m@th#1\bullet$}}}}}
\newcommand*\bddot{\mathpalette\bddot@{.65}}
\newcommand*\bddot@[2]{\mathbin{\vcenter{\hbox{\scalebox{#2}
    {$\m@th#1\smash{{}_{\bullet}^{\bullet}}$}}}}}
\newcommand{\circled}[2][]{%
  \tikz[baseline=(char.base)]{%
    \node[shape = circle, draw, inner sep = .5pt]
    (char) {\phantom{\ifblank{#1}{#2}{#1}}};%
    \node at (char.center) {\makebox[0pt][c]{#2}};}}
\newcommand{\ve}[1]{\vectorsym{#1}}
\newcommand{\vv}{\ve{v}}
\newcommand{\vw}{\ve{w}}
\newcommand{\vx}{\ve{x}}
\newcommand{\vy}{\ve{y}}
\newcommand{\vz}{\ve{z}}
\newcommand{\lp}{\left(}
\newcommand{\rp}{\right)}
\newcommand{\ls}{\left[}
\newcommand{\rs}{\right]}
\newcommand{\lc}{\left\{}
\newcommand{\rc}{\right\}}
\newcommand{\te}[1]{\tensorsym{#1}}
\def\loc{\operatorname{loc}}
\def\dist{\operatorname{dist}}
\def\supp{\operatorname{supp}}
\def\esssup{\operatorname*{ess\;sup}}
\def\rank{\operatorname{rank}}
\def\diam{\operatorname{diam}}
\def\card{\operatorname{card}}
\newcommand{\nd}{\quad\text{ and }\quad}
\newcommand{\frl}{\quad\text{ for all }}
\newcommand{\ov}[1]{\overline{#1}}
\newcommand{\vep}{\varepsilon}
\newcommand{\wh}[1]{\widehat{#1}}
\newcommand{\suprd}[1]{#1^\text{rd}}
\newcommand{\supth}[1]{#1^\text{th}}
\newcommand\reallywidecheck[1]{%
\savestack{\tmpbox}{\stretchto{%
  \scaleto{%
    \scalerel*[\widthof{\ensuremath{#1}}]{\kern-.6pt\bigwedge\kern-.6pt}%
    {\rule[-\textheight/2]{1ex}{\textheight}}
  }{\textheight}%
}{0.5ex}}%
\stackon[1pt]{#1}{\scalebox{-1}{\tmpbox}}%
}
\newcommand{\dom}{\Omega}
\newcommand{\bnd}{\Gamma}
\newcommand{\bbD}{\mathbb{D}}
\newcommand{\valp}{\ve{\alpha}}
\newcommand{\vzet}{\ve{\zeta}}
\newcommand{\vg}{\ve{g}}
\newcommand{\vP}{\ve{P}}
\newcommand{\tI}{\te{I}}
\newcommand{\vom}{\ve{\omega}}
\newcommand{\orb}{\set{O}}
\theoremstyle{plain}
\newtheorem{theorem}{Theorem}[section]
\newtheorem{lemma}[theorem]{Lemma}
\newtheorem{corollary}[theorem]{Corollary}
\theoremstyle{definition}
\newtheorem{definition}{Definition}[section]
\newtheorem{example}{Example}[section]
\theoremstyle{remark}
\newtheorem*{remark}{Remark}
\begin{document}

\title{Nonlocal Poincar\'{e} Inequalities for Integral Operators with Integrable Nonhomogeneous Kernels}

\author{Mikil Foss\footnote{203 Avery Hall,
    Lincoln, NE 68588-0130 USA, University of Nebraska-Lincoln\newline
    email: mfoss3@unl.edu\newline\newline
    The author's research is supported by the NSF award DMS-1716790}}

\maketitle

\begin{abstract}
The paper provides two versions of nonlocal Poincar\'e-type inequalities for integral operators with a convolution-type structure and functions satisfying a zero-Dirichlet like condition. The inequalities extend existing results to a large class of nonhomogeneous kernels with supports that can vary discontinuously and need not contain a common set throughout the domain. The measure of the supports may even vanish allowing the zero-Dirichlet condition to be imposed on only a lower-dimensional manifold, with or without boundary. The conditions may be imposed on sets with co-dimension larger than one or even at just a single point. This appears to currently be the first such results in a nonlocal setting with integrable kernels. The arguments used are direct, and examples are provided demonstrating the explicit dependence of bounds for the Poincar\'{e} constant upon structural parameters of the kernel and domain.\\[5pt]
\emph{MSC2010}: 26D10, 45P05, 45E10, 47G10\\[5pt]
\emph{Keywords}: Nonlocal Poincar\'{e} inequalities, Nonlocal operators, Integrable kernels, Dirichlet problems, Lower dimensional boundaries
\end{abstract}

\section{Introduction}\label{S:Intro}
\subsection{Overview}\label{SS:Overview}
Nonlocal frameworks have empowered mathematical modelers with new tools to capture phenomena with non-differentiable or even discontinuous features. Classical models employ operators that inherently assume the solution possesses some level of differentiability. Modeling with nonlocal operators can expand the space of possible solutions to functions that lack such smoothness. Areas in which nonlocal models have had success include image processing~\cite{GilOsh:08a,KatFoiEgi:10a}, diffusion models~\cite{AndMazRos:14a}, population models~\cite{CovDup:07a}, flocking models~\cite{PoySol:17a}, phase transitions~\cite{Gal:18a,Han:04a}, and material deformation with failure~\cite{Sil:00a,Sil:17a,SilEptWec:07a}. A comprehensive discussion of the applications and analysis for nonlocal models and operators, with many additional references, is available in the monograph~\cite{Du:19a}.

In this paper, we establish Poincar\'{e} like inequalities for a broad family of nonlocal operators. Roughly speaking, Poincar\'{e} inequalities deliver a bound on either the norm, or the variance, of a function in terms of some measure for the total change of the function throughout its domain. The well-known classical Poincar\'{e} inequality states that for each $1\le p<\infty$, there exists a constant $C<\infty$ such that
\[
  \|u\|_{L^p(\dom)}\le C\|\grd u\|_{L^p(\dom)},
\]
for all $u\in W^{1,p}(\dom)$, with zero trace on the boundary $\partial\dom$. Here the domain $\dom\subset\ren$ is an open bounded set. There are many variants of this inequality, but our results can be best described as a nonlocal version of this one. In particular, we require $u$ to satisfy, in some sense, a zero-trace condition. In a classical (local) context, such inequalities are a key ingredient in establishing existence and stability for variational and PDE problems. The Poincar\'{e} constant is also related to the conditioning number and stability in certain methods for numerical solutions.

Our focus is on Poincar\'{e} inequalities where the measure for the total change in the function has a weighted difference operator $\Delta_{\vz}u(\vx):=[u(\vx+\vz)-u(\vx)]w(\vx,\vz)$ as their basis. Using $L$ to denote the space of $\re$-valued measurable functions, we assume $w\in L(\ren\times\ren)$ and $u\in L(\ren)$. (We may always extend $u$ by zero as needed.) Under various additional structural assumptions this operator has been identified as a nonlocal or two-point gradient~\cite{Du:19a,DuGunLeh:12a,GilOsh:08a}. We consider two related quantifications of the total change in $u$ over the domain $\dom$, both of which involve ``accumulating'' $\Delta_{\vz}u(\vx)$ over neighborhoods of $\vx\in\dom$. We establish conditions under which this nonlocal measure for total change yields a bound for $u$ in $L^p$, with $1\le p<\infty$.

In the remainder of this section, we give a general description of the results proved in this paper. The significance of the results, in the context of related work that can be found in the literature, is discussed in Section~\ref{SS:Significance}, and the organization of the remainder of the paper is in Section~\ref{SS:Organization}. After establishing some notation, in Section~\ref{S:IntroExamples}, we continue the introduction with a demonstration of the arguments used in the context of a few examples.

\subsubsection{Nonlocal Poincar\'{e} Inequality I}\label{SSS:IntroPoinc1}
The first version of the Poincar\'{e} inequality is stated in Theorem~\ref{T:MainPoinc1} and has the general form
\begin{equation}\label{E:IntroMainPoinc1}
  \|u\|^p_{L^p(\dom)}
  \le
  C\|\mathbb{G}u(\vx)\|_{L^p(\dom)}^p,
\end{equation}
with
\[
  \mathbb{G}u(\vx):=\lint{Z(\vx)}[u(\vx+\vz)-u(\vx)]
    \rho(\vx,\vz)\gamma(\vx)^\frac{1}{p}\dd\vz.
\]
Here $Z:\dom\twoheadrightarrow\ren$, $\rho\in L(\ren\times\ren)$, and $\gamma\in L^+(\ren)$. (We use $\twoheadrightarrow$ to indicate a set-valued function and use $L^+$ to denote the space of nonnegative measurable functions.) Define $\Psi:\dom\twoheadrightarrow\ren$ by $\Psi(\vx):=\vx+Z(\vx)$. The main requirements on the kernel components $\rho$ and $\gamma$ are
\begin{enumerate}[(i)]
\item\label{A:IntroRho} $\rho(\vx,\cdot)\in L^\frac{p}{p-1}(\ren)$ and
\[
  \lint{Z(\vx)}\rho(\vx,\vz)\dd\vz=1,\frl\vx\in\dom;
\]
\item\label{A:IntroGamma} $\gamma\ge 1$ and there exists $0<\nu<1$ such that
\[
  \lint{\Psi^{-1}(\vy)}R(\vx)\gamma(\vx)\dd\vx\le\nu\gamma(\vy),\frl\vy\in\dom.
\]
\end{enumerate}
Here $R(\vx):=\|\vz\mapsto\rho(\vx,\vz)\|^{p-1}_{L^\frac{p}{p-1}(Z(\vx))}$. There are some additional measurability assumptions, and we refer to the statement of Theorem~\ref{T:MainPoinc1} for these. If the set $Z$ is independent of $\vx$ and symmetric about $\ve{0}$ and $\rho(\vx,\cdot)=|Z|^{-1}$, for all $\vx\in\dom$, then $Z^{-1}=Z$ and~(\ref{A:IntroGamma}) becomes a reverse Jensen's inequality and is satisfied by any $\gamma$ that is uniformly concave. We also observe that $\mathbb{G}u\in L(\dom)$ whenever $u\in L^1(\dom\cup\bnd)$. As with the local Poincar\'{e} inequality, we require $u$ to satisfy a zero-Dirichlet like condition. Let $\bnd\subseteq\ren$ be a measurable set such that $\bigcup_{\vx\in\dom}\Psi(\vx)\subseteq\dom\cup\bnd$. Our nonlocal zero-trace conditions are
\[\tag{BC}
  u=0\text{ a.e. on }\bnd
  \nd
  \lint{\dom\bs\bnd}\lp\flint{\Psi(\vx)}|u(\vy)|\dd\vy\rp^p\gamma(\vx)\dd\vx<\infty.
\]
The term in the parentheses is the mean-value of $|u|$ over $\Psi(\vx)$. If $|\bnd|>0$, then the first condition in~(BC) becomes a volume-constraint. If in addition, we find $\Psi$ has uniformly positive measure in $\dom$ and $\gamma\in L^\infty$, then the second condition in~(BC) is always satisfied by $u\in L^1$. The uniform positivity of $|\Psi|$, however, is not a requirement of Theorem~\ref{T:MainPoinc1}. In fact, with $d_\bnd$ denoting the distance to $\bnd$, we may have $\Psi(\vx)\subseteq\bll_{d_\bnd(\vx)}(\vx)$, for all $\vx\in\dom$, so $\bnd$ has zero measure. In Lemma~\ref{L:DistClass}, we provide a large class of $\gamma\in L^\infty_{\loc}(\ren\bs\bnd)$ that satisfy~(\ref{A:IntroGamma}). In this class, we find $\gamma(\vx)\sim d_\bnd(\vx)^{-\beta}$, with $\beta>0$. Since we only assume $u\in L^1$, the first assumption in~(BC) is empty. The second assumption, however, requires $|u(\vx)|$ to converge to zero in the sense of averages as $\bnd$ is approached.

\subsubsection{Nonlocal Poincar\'{e} Inequality II}\label{SSS:IntroPoinc2}
A second version of the Poincar\'{e} inequality is provided in Theorem~\ref{T:MainPoinc2}. It has the form
\begin{equation}\label{E:IntroMainPoinc2}
  \|u\|^p_{L^p(\dom)}
  \le
  C\lint{\dom}\lint{Z(\vx)}
    \left|u(\vx+\vz)-u(\vx)\right|^p\mu(\vx,\vz)\dd\vz\dd\vx,
\end{equation}
with $\mu\in L^+(\ren\times\ren)$. The other assumptions on $\mu$ are somewhat technical but allow for integrable or continuous kernels. Furthermore, the supports need not contain a common set, so there may be $\vx_1,\vx_2\in\dom$ such that $Z(\vx_1)\cap Z(\vx_2)=\emptyset$. The definition of $\Psi$ and the assumptions for $\bnd$ are the same as before. We see that we can obtain~\eqref{E:IntroMainPoinc2} from~\eqref{E:IntroMainPoinc1} when $\rho(\vz,\vz)=|Z(\vx)|^{-1}$ and there is a constant $K<\infty$ such that $\mu(\vx,\vz)\ge K\gamma(\vx)|Z(\vx)|^{-1}$, for $\vx\in\dom$ and $\vz\in Z(\vx)$. A challenge to using Theorem~\ref{T:MainPoinc1}, however, is identifying $\Psi^{-1}$ and a weight $\gamma$ that satisfies assumption~(\ref{A:IntroGamma}). For Theorem~\ref{T:MainPoinc2}, we instead assume that the sets $\Psi(\vx)$ provide a family of paths that connect the points in $\dom$ to points in $\bnd$, where it is assumed that $u=0$ (see Figure~1 in Section~\ref{S:IntroExamples}). The paths are represented by the forward orbits of a family of discrete dynamical systems with a common absorption set contained within $\bnd$. The constant in the Poincar\'{e} inequality is related to the volume changes of $\supp\mu$ compounded by the number of ``steps'' in the path to the absorption set and blows-up if $|\Psi(\vx)|\to0$, which must happen if $|\bnd|=0$. Supplementing the dynamical system argument with a version of Poincar\'{e} Inequality I allows us to obtain inequalities of the form~\eqref{E:IntroMainPoinc2} even when $|\bnd|=0$. As an application, in Example~\ref{Ex:ManExample2} we establish the following: if $\dom$ is a convex set, $m\in\{0,1,\dots,n-1\}$, and $\bnd\subseteq\ov{\dom}$ is a compact $m$-dimensional $\cnt^2$-manifold without boundary, then for each $\beta>n-m$ and $\delta_0>0$, there exists $C<\infty$ such that
\begin{equation}\label{E:IntroLowDim}
  \|u\|_{L^p(\dom)}^p
  \le
  C\lint{\dom\bs\bnd}\frac{1}{\delta(\vx)^\beta}
  \flint{\dom\cap\bll_{\delta(\vx)}(\vx)}
    |u(\vy)-u(\vx)|^p\dd\vy\dd\vx,
\end{equation}
for every $u\in L^1(\dom)$ satisfying~(BC). Here $\delta(\vx)=\min\{\delta_0,d_\bnd(\vx)\}$, for each $\vx\in\dom$. The set $\bnd$, on which~(BC) is imposed, may be some or all of $\partial\dom$, or be contained in the interior of $\dom$. Moreover, the co-dimension can be larger than one. For example $\bnd$ can be a discrete set of points or a filament within $\ov{\dom}$. One can extend Example~\ref{Ex:ManExample2} to accommodate manifolds with an $m-1$-dimensional boundary, provided $\beta>n-m+1$ in an open neighborhood of the boundary of $\bnd$. Relaxing the smoothness requirement on $\bnd$ is briefly discussed in Remark~\ref{R:SmoothMan}.

\subsection{Significance of Results}\label{SS:Significance}

In this section, we provide a broader context for our results. Poincar\'{e} inequalities are an important component in the study of many mathematical problems. For example, the direct method for existence and standard arguments for stability rely on coercivity of the associated functional in an appropriate space, which is typically provided by a Poincar\'{e} inequality~\cite{BelMor:14a,DuTia:18a,FosRadWri:18a,HinRad:12a}. The dependence of an upper bound for the Poincar\'{e} constant on structural parameters in the problem is also often essential. Bounds on the Poincar\'{e} constant provide information about the conditioning number and stability of the numerical methods~\cite{AksMen:10a,AksPar:11a,AksUnl:14a}. In many models, the nonlocal operator appears with a parameter that captures the concentration of the kernel allowing one to consider a limit as the nonlocality vanishes. Identifying the local operator that corresponds to this limit relies on the bound for the Poincar\'{e} constant scaling properly with respect to the parameter~\cite{FosRad:18a,DuMen:15a}. We will outline some of the arguments for Poincar\'{e} inequalities that exist in the current literature and then describe the distinguishing features of this paper's results. For a much more comprehensive survey of the literature, we refer to~\cite{Du:19a}.

To facilitate the rest of the discussion, we introduce the parameter $\delta>0$ that captures the nonlocality of the operators. For each $\delta>0$, suppose that $J_\delta\in L^+([0,\infty))$ and $\valp_\delta\in L^2(\ren)$ satisfy
\begin{align}
\nonumber
  & \supp J_\delta\subseteq[0,\delta], &&&&
  \supp\valp_\delta\subseteq\ov{\bll}_\delta(\ve{0}),\\
\label{E:AlphaProp}
  &\valp_\delta(-\vz)=-\valp_\delta(\vz), && \text{and} &&
  \lint{\bll_\delta(\ve{0})}\nss\vz\bdot\valp_\delta(\vz)\dd\vz=1.
\end{align}
Thus, as $\delta\to0$, the masses of $J_\delta$ and $\valp_\delta$ concentrate around the origin. We set $\bnd_\delta:=\ann_{[0,\delta]}(\dom)$. Here, with $I$ an interval and $E\subseteq\ren$, we use $\ann_{I}(E)$ to denote the annular region $\{\vx\in\ren:d_E(\vx)\in I\}$, where $d_E$ is defined in~\eqref{D:Distance}.

In~\cite{DuTia:18a}, an argument for a one-dimensional version of~\eqref{E:IntroMainPoinc1} is provided. The proof relies on an analysis of the Fourier transform of $\mathbb{G}_\delta u$ and thus requires $u$ to satisfy periodic boundary conditions and have zero-mean over $\dom$. Additionally, the kernel $\rho_\delta$ is assumed to be odd and homogeneous throughout $\dom$. Ultimately, with $0<\alpha<1$, they show the stronger result that there is $C<\infty$, independent of $u$, such that $\|u\|_{H^\alpha(\dom)}\le C\|\mathbb{G}_\delta u\|_{L^2(\dom)}$. Moreover, in the limit as $\delta\to0$, the Poincar\'{e} constant becomes independent of $\delta$ and $\mathbb{G}_\delta$ converges to the classical derivative operator. The proof, however, requires the kernel to satisfy $\rho_\delta(z)=J_\delta(|z|)z^{-1}\sim|z|^{-1-\alpha}$ near the origin. (Note that $\rho_\delta=J_\delta$ in the notation of~\cite{DuTia:18a}.) Thus the kernel is not an integrable function, and the operator is understood in the sense of a Cauchy principle value. Under similar requirements, this result has been extended in~\cite{DuLee:19a} to nonsymmetric kernels supported on the half-interval $(0,\delta)$.

Regarding inequalities of the form~\eqref{E:IntroMainPoinc2}, we describe three strategies of proof. We assume that $u=0$ a.e. on $\bnd_\delta$. In the context of a nonlocal diffusion problem, a Poincar\'{e} inequality of the form~\eqref{E:IntroMainPoinc2} was proved in~\cite{AndMazRos:08a} (see also~\cite{AksMen:10a,AksPar:11a}). It is assumed that $J_\delta$ has the additional properties $\supp J_\delta=[0,\delta]$, $J_\delta\in\cnt([0,\infty))$, and $\|J_\delta\|_{L^1(\ren)}=1$. The Poincar\'{e} inequality is obtained by iterating and summing the inequalities
\[
  \|u\|_{L^p(S_k)}^p
  \le
  C'_k\lint{S_k}\lint{S_{k-1}}
    |u(\vy)-u(\vx)|^p J_\delta(\|\vy-\vx\|_{\ren})\dd\vy\dd\vx
  +C''_k\|u\|_{L^p(S_{k-1})}^p
\]
through $k=0,\dots,k_0$. Here $S_{-1}:=\bnd_\delta$ and $S_k:=\dom\cap\ann_{(kr,(k+1)r]}(\partial\dom)$, for a sufficiently small $0<r\le\frac{\delta}{2}$. The collection $\{S_k\}_{k=0}^{k_0}$ must be a decomposition of $\dom$ into concentric annular regions, so $k_0\sim\frac{\diam(\dom)}{r}$. Hence, for the resulting Poincar\'{e} constant, one finds that there exists an $M>1$ such that $C\sim M^\frac{\diam(\dom)}{r}\ge M^\frac{\diam(\dom)}{\delta}$, which scales poorly as $\delta\to0$. For local-nonlocal correspondence or condition analysis a different argument is needed. A second strategy to proving~\eqref{E:IntroMainPoinc2} requires establishing some variant of the following compactness result: if $V$ is a closed subspace of $L^2(\dom\cup\bnd_\delta)$ such that the only constant function is the zero function and $\{u_k\}_{k=1}^\infty\subset V$ converges weakly to some $u\in V$, then
\begin{equation}\label{E:Compactness}
  \lim_{k\to\infty}\lint{\dom}\lint{\bll_\delta(\vx)}
    |u_k(\vy)-u_k(\vx)|^2 J_\delta(\|\vy-\vx\|_{\ren})\dd\vy\dd\vx=0
  \Longrightarrow
  \lim_{k\to\infty}\|u_k\|_{L^2(\dom)}=0.
\end{equation}
The Poincar\'{e} inequality then follows a proof by contradiction. An argument along these lines has been used in~\cite{AksMen:10a,AksPar:11a,AksUnl:14a,DuMen:13a}. In~\cite{Du:19a} (Lemma~5.11), a Poincar\'{e} inequality with nonhomogeneous support $\bll_{d_{\partial\dom}(\vx)}(\vx)$ is proved. Poincar\'{e}-Korn type inequalities, in a vector-valued setting, are provided in ~\cite{DuMen:14a,DuMen:14b,DuMen:15a,Men:12a}. For versions of~\eqref{E:Compactness} with a general $1<p<\infty$, we refer to~\cite{BouBreMir:01a,DuMen:15a,Pon:03a,Pon:04a}. While this yields a Poincar\'{e} inequality that holds for general $\delta>0$, as this is an indirect argument, it is only possible to identify the dependence of the Poincar\'{e} constant on $\delta$ for sufficiently small $\delta$ . The last argument, that we describe, can be found in a currently unpublished work~\cite{DicEstTuc:Pre}. It follows a standard proof for the local Poincar\'{e} inequality based on Gauss' theorem~\cite{GueLee:96a}. Since $\valp_\delta$ is antisymmetric, the Fubini-Tonelli theorem and the change of variables $\vy\mapsto\vx+\vz$ yields, for any $\vx_0\in\ren$,
\begin{multline}
\label{E:ThirdStrat}
  \lint{\dom\cup\bnd_\delta}|u(\vy)|^2(\vy-\vx_0)
    \bdot\lint{\bll_\delta(\ve{y})}\ns\valp_\delta(\vy-\vx)\dd\vx\dd\vy\\
  \ns=\lint{\dom\cup\bnd_\delta}\lint{\bll_\delta(\ve{0})}
    |u(\vx+\vz)|^2(\vx+\vz-\vx_0)\bdot\valp_\delta(\vz)\dd\vz\dd\vx=0.
\end{multline}
Using this, H\"{o}lder's inequality, and the assumptions in~\eqref{E:AlphaProp},
\[
  \|u\|^2_{L^2(\dom)}
  =
  \lint{\dom\cup\bnd_\delta}\lint{\bll_\delta(\ve{0})}
    \lp|u(\vx)|^2-|u(\vx+\vz)|^2\rp(\vx+\vz-\vx_0)
      \bdot\valp_\delta(\vz)\dd\vz\dd\vx
  \le
  r_02^\frac{1}{2}I_1^\frac{1}{2}\cdot I_2^\frac{1}{2},
\]
with $r_0:=\sup_{\vy\in\dom\cup\bnd_\delta}\|\vy-\vx_0\|_{\ren}$,
\begin{align*}
  I_1&:=\lint{\dom\cup\bnd_\delta}\lint{\bll_\delta(\ve{0})}
    \ls|u(\vx+\vz)|^2+|u(\vx)|^2\rs\dd\vz\dd\vx,
\intertext{and}
  I_2&:=\lint{\dom\cup\bnd_\delta}\lint{\bll_\delta(\ve{0})}
    |u(\vx+\vz)-u(\vx)|^2\|\valp_\delta(\vz)\|_{\ren}^2\dd\vz\dd\vx.
\end{align*}
Selecting $\vx_0\in\dom\cup\bnd_\delta$ so that $r_0=\frac{1}{2}\diam(\dom\cup\bnd_\delta)$ one may obtain the Poincar\'{e} inequality
\begin{equation}\label{E:IntroPoinc3}
  \|u\|^2_{L^2(\dom)}
  \le
  \delta^n\diam(\dom\cup\bnd_\delta)^2||\bll_1|
  \lint{\dom\cup\bnd_\delta}\lint{\bll_\delta(\ve{0})}
    |u(\vx+\vz)-u(\vx)|^2\|\valp_\delta(\vz)\|_{\ren}^2\dd\vz\dd\vx.
\end{equation}
If $Z=\ann_{(\tau\delta,\delta)}(\ve{0})$ and $\valp_\delta(\vz)=A\vz\|\vz\|^{-\beta-1}\chi_{Z}(\vz)$, for some $\beta\neq n-1$ and $0<\tau<1$, then assumption~\eqref{E:AlphaProp} implies $A=\frac{n-\beta+1}{n(1-\tau^{n-\beta+1})}\delta^{\beta-1}|\bll_\delta|^{-1}$ and~\eqref{E:IntroPoinc3} becomes
\[
  \|u\|^2_{L^2(\dom)}
  \le
  \lp\frac{(n-\beta+1)\diam(\dom\cup\bnd_\delta)}{n(1-\tau^{n-\beta+1})}\rp^2\nss
  \ns\lint{\dom\cup\bnd_\delta}\!\flint{Z}
    \lp\frac{\delta}{\|\vz\|_{\ren}}\rp^{2\beta}
    \frac{|u(\vx+\vz)-u(\vx)|^2}{\|\vz\|_{\ren}^2}\dd\vz\dd\vx.
\]
The argument can easily be expanded to exponents $1<p<\infty$ by using the inequality $|b|^p-|a|^p\le\lp|a|^{p-1}+|b|^{p-1}\rp|b-a|$. When $\beta=0$ and $p=2$, we see that the Poincar\'{e} constant above is comparable to the one produced in Example~\ref{Ex:BasicEx2}. A shortcoming for this argument is that the outer domain of integration in the upper bound includes the annular region $\bnd_\delta$. This is a byproduct of the Fubini-Tonelli theorem used to obtain~\eqref{E:ThirdStrat}.

In this paper, we provide broadly applicable Poincar\'{e} inequalities in an arbitrary spatial dimension $n\in\nats$. We focus on function spaces satisfying a zero-Dirichlet like condition~(BC). The set $\bnd$ may include some, none, or all of an annular neighborhood around $\dom$ or be completely contained within $\dom$. The arguments described above require $u\in L^p(\dom)$ a priori. In contrast, we need only assume $u\in L^1(\dom\cup\bnd)$ in addition to~(BC) and in some cases, $u\in L(\dom\cup\bnd)$ (Remark~\ref{R:SpecPoincCases} and Corollaries~\ref{C:PoincCor1} and~\ref{C:PoincCor2}). Under the assumptions of this paper, the $p$-integrability of $u$ is a consequence of the Poincar\'{e} inequalities. Furthermore, the proofs are based on direct arguments. As can be seen in some of the examples, this allows us identify an upper bound for the Poincar\'{e} constant with explicit dependence on $\dom$ and on the structural parameters for the kernel.

A more significant contribution of this paper towards the study and well-posedness of nonlocal problems is the flexibility allowed for the kernels. Many physical applications are inherently nonhomogeneous, a feature that is reflected in the kernel of the operator. One can see that the Poincar\'{e} inequalities described earlier in this section can be extended to nonhomogeneous kernels $\mu$ that dominate either $J_\delta$ or $\|\valp_\delta\|^2_{\ren}$, but this requires the support of the kernel to uniformly contain a fixed symmetric set of positive measure; i.e. there exists $E\subseteq\ov{\bll}_\delta$ such that $|E|>0$ and $E\subseteq\supp\mu(\vx,\cdot)$, for all $\vx\in\dom$. (We note that Lemma 5.11 in~\cite{Du:19a} is an exception to this statement.) There are important settings that are incompatible with this requirement, such as models for fractured materials. Theorems~\ref{T:MainPoinc1} and~\ref{T:MainPoinc2} allow kernels without any such uniform support or symmetry. In fact, there can be open sets $\dom_1,\dom_2\subset\dom$ such that $\dom_1\cap\dom_2=\emptyset$, $\partial\dom_1\cap\partial\dom_2\neq\emptyset$ and $\supp\mu(\vx_1,\cdot)\cap\supp\mu(\vx_2,\cdot)=\emptyset$, for all $\vx_1\in\dom_1$ and $\vx_2\in\dom_2$. Thus the support of $\mu$ deforms discontinuously across $\partial\dom_1\cap\partial\dom_2$ and has no common support between the two regions (see Example~\ref{Ex:Discontinuous}).

This work also contributes to the study of problems with Dirichlet like constraints on sets with co-dimension larger than one. There has recently been much interest in studying PDEs with Dirichlet conditions on low dimensional sets. A theory for elliptic problems, including trace theorems, Harnack inequalities, regularity results, and harmonic measures, has already been developed in~\cite{MayGuyFen:19b,MayGuyFen:18a,MayFenZha:18a,LewNys:18a,MayZha:19a}. Very little has been done within a nonlocal framework. As explained in the previous subsection, one may have $\vx_0\in\dom$ where $|\supp\mu(\vx,\cdot)|\to0$ as $\vx\to\vx_0$, which allows one to consider a zero-Dirichlet like condition on a set $\bnd$ with dimension $0\le m<n$. In this setting, the Poincar\'{e} inequality~\eqref{E:IntroLowDim} is obtained in Example~\ref{Ex:ManExample2}. For the purposes of comparison, put $s=\frac{\beta}{p(n-m)}>\frac{1}{p}$ and recall that $\delta(\vx)=\min\{d_\bnd(\vx),\delta_0\}$. We may rewrite~\eqref{E:IntroLowDim} as
\begin{equation}\label{E:IntroPoincLowDim}
  \|u\|_{L^p(\dom)}^p
  \le
  C\underbrace{\lint{\dom\bs\bnd}\flint{\bll_{\delta(\vx)}(\vx)}
    \left|\frac{u(\vy)-u(\vx)}{\delta(\vx)^{s(n-m)}}\right|^p\dd\vy\dd\vx}
    _{:= J_{s,\bnd}^p[u]}.
\end{equation}
Let us denote the set of functions for which $J^p_{s,\bnd}$ is finite by $\class{J}^p_{s,\bnd}$. The finiteness of $J^p_{s,\bnd}[u]$ implies, in some sense, that $u$ has $s$-order differentiability localized at $\bnd$. The factor $(n-m)$ accounts for the scaling for the measure of the tubular $r$-neighborhood of $\bnd$ with respect to $r$. Due to the Poincar\'{e} inequality, the $p$-integrability of $u\in\class{J}^p_{s,\bnd}$ is expected, but outside any tubular neighborhood of $\bnd$, the function $u$ might exhibit no additional regularity. This is a generalization of Lemma 5.11 of~\cite{Du:19a} where $p=2$ and $\bnd=\partial\dom$, so $m=n-1$. In~\cite{TiaDu:17a}, it was shown that one can identify a trace that is continuous from $\class{J}^2_{1,\partial\dom}$ is finite into the Sobolev–Slobodeckij space $W^{\frac{1}{2},2}(\partial\dom)$. It seems natural to expect a similar result for the space $\class{J}^p_{s,\bnd}$. In fact, ongoing work suggests that, as in the setting of Besov spaces~\cite{JonWal:84a}, for $0\le m\le n-1$ not necessarily an integer, there is a trace operator that continuously maps $\class{J}^p_{s,\bnd}$ into $W^{s',p}(\bnd)$, with $s'=\lp s-1/p\rp(n-m)$. This is currently work that is still in progress. The Poincar\'{e} inequality in~\eqref{E:IntroPoincLowDim} appears to be the first result towards developing a theory for nonlocal problems with constraints on higher co-dimensional sets.

\subsection{Organization of Paper}\label{SS:Organization}

The next section establishes the notation used throughout the paper. In Section~\ref{S:IntroExamples}, we present the arguments used for the main results specialized to a couple of basic examples. The results from the literature that we need are collected in Section~\ref{SS:CitedResults}, and the main lemmas are proved in Section~\ref{SS:NewLemma}. The main Poincar\'{e} inequalities and a couple of corollaries are stated in Section~\ref{S:MainResults}. We provide a complete proof for Poincar\'{e} Inequality II. The proof for a Poincar\'{e} Inequality I is essentially the same as one of the components of the argument for Poincar\'{e} Inequality II. To demonstrate the applicability of the results, we produce several examples in Section~\ref{S:GenExamples}.

\section{Definitions and Notation}\label{S:Definitions}

In this section, we summarize terminology and notation that is common throughout the paper. Notation that is specific to a particular argument is introduced as needed.

The class of Borel-measurable sets in $\ren$ is denoted by $\class{B}(\ren)$ and the Lebesgue-measurable sets are denoted by $\class{L}(\ren)$. Unless indicated otherwise, Lebesgue-measurability is intended for sets and maps. Let $E,F\subseteq\ren$ be given. Given $\vx\in\ren$, the set $\vx+E\subseteq\ren$ is the translation of $E$ by $\vx$. If $E\in\class{B}(\ren)$, its $s$-dimensional Hausdorff measure is denoted by $\haus^s(E)$. We use $\chi_E:\ren\to\{0,1\}$ for the \textbf{characteristic function} for $E$. In addition to the standard notation for $L^p$ spaces, if $G\subseteq\rem$ is measurable (i.e., Lebesgue-measurable), we use $L(E;G)$ for the space of $G$-valued Lebesgue measurable maps. The codomain may be omitted if it is $\re$, and $L^+(E)$ is the space of non-negative measurable functions. If $u\in L(E)$ is differentiable at $\vx\in E$, then $\partial_{\vx}u(\vx)\in\ren$ denotes its gradient. The \textbf{distance function} from $E$ is $d_E:\ren\to[0,\infty)$ defined by
\begin{equation}\label{D:Distance}
  d_E(\vx):=\inf\lc\|\vx-\vy\|_{\ren}:\vy\in E\rc.
\end{equation}
It is well-known that $d_E$ is a Lipschitz function, is differentiable a.e. on $\ren$, and $\|\partial_{\vx}d_E\|_{\ren}=1$ for a.e. $\vx\in\ren$. If $E$ is compact, then by the Kuratowski–Ryll-Nardzewski measurable selection theorem~\cite{AliBor:06a}, there exists a measurable \textbf{metric projection} $\vP_E:\ren\to E$ such that $\|\vx-\vP_E(\vx)\|_{\ren}=d_E(\vx)$, so $\vP_E(\vx)$ is a point in $E$ that is nearest to $\vx$.
The \textbf{Hausdorff distance} between $E$ and $F$ is
\[
  \dist(E,F):=\max\lc\sup_{\vx\in E}d_F(\vx),\sup_{\vy\in F}d_E(\vy)\rc.
\]
For each $r>0$, the open (\textbf{tubular}) \textbf{$r$-neighborhood} (or \textbf{$r$-parallel neigborhood}) of $E$ is $\bll_r(E):=\lc\vx\in\ren: d_E(\vx)<r\rc$ and the \textbf{$r$-boundary} of $E$ is $\sph_r(E):=\lc\vx\in\ren: d_E(\vx)=r\rc$. Given an interval $I\subseteq[0,\infty)$, we define the annular neighborhood $\ann_I(E):=\bigcup_{r\in I}\sph_r$. For brevity, we will use $\ann_r(E):=\ann_{(0,r)}(E)$, and we may write $\bll_r(\vx)$, $\sph_r(\vx)$, or $\ann_I(\vx)$ if $E=\{\vx\}$. A superscript will be included when the dimension of the set needs to be made explicit, so for example, $\sph_r^{n-1}(\vx):=\lc\vy\in\ren:\|\vy-\vx\|_{\ren}=r\rc$. The center set is $E=\{\ve{0}\}$ or irrelevant whenever it is unspecified. Given $0<\theta\le\pi$, $\vx\in\ren$, and $\vom\in\sph_1^n$, set
\begin{equation}\label{D:Cone}
  \set{C}_\theta(\vx;\vom)
  :=
  \lc\vy\in\ren:(\vy-\vx)\bdot\vom
    >\cos\theta\cdot\|\vy-\vx\|_{\ren}
  \rc,
\end{equation}
so $\set{C}_\theta(\vx;\vom)$ is an $n$-dimensional open cone, with central axis parallel to $\vom$, aperture $2\theta$, and vertex at $\vx$. The set $\set{C}_\frac{\pi}{2}(\vx;\vom)$ is the open half-space of $\ren$ bounded by the hyperplane with normal vector $\vom$ and containing $\vx$. The $\supth{i}$-coordinate vector is $\ve{e}_i\in\ren$. Given a multi-index $\ve{i}=(i_1,\dots,i_m)\in\{1,\dots,n\}^m$, with $1\le i_1<\cdots<i_m\le n$, we define an associated $m$-dimensional coordinate-hyperplane by
\begin{equation}\label{D:Hyperplane}
  \mathbb{H}^m=\mathbb{H}^m_{\ve{i}}:=\{\vx\in\ren:x_{i_1}=\cdots=x_{i_m}=0\}.
\end{equation}
As in~\cite{AliBor:06a}, we use $\Psi:E\twoheadrightarrow F$ to denote a \textbf{correspondence}, so $\Psi(\vx)\subseteq F$, for each $\vx\in E$. The \textbf{inverse correspondence} of $\Psi$ is $\Psi^{-1}:F\twoheadrightarrow E$ defined by
\[
  \Psi^{-1}(\vy):=\{\vx\in E:\vy\in\Psi(\vx)\}.
\]

\begin{definition} Let $U\subseteq\ren$ be an open set.
\begin{enumerate}[(a)]
\item Recall that $V$ is \textbf{compactly contained} $U$, denoted by $V\subset\subset U$, if $\ov{V}$ is a compact subset of $U$.
\item We say that a measurable set $E$ is \textbf{essentially compactly contained} in $U$, denoted $E\subset\subset U$ a.e., if there exists a set $V\subset\subset U$ such that $|E\bs\ov{V}|=0$.
\end{enumerate}
\end{definition}

\begin{definition}\label{D:CountCont} Let $E\subset\rem$ be a measurable set, and let $\class{U}$ be a countable collection of sets.
\begin{enumerate}[(a)]
\item We call $\class{U}$ a \textbf{countable open partition of $E$} if
\begin{itemize}
\item $U\subseteq E$ and $U$ is open, for each $U\in\class{U}$;
\item For each $U_1,U_2\in\class{U}$, we find $U_1\cap U_2\neq\emptyset$ if and only if $U_1=U_2$ (i.e. the members of $\class{U}$ are mutually disjoint);
\item $\left|E\bs\bigcup_{U\in\class{U}}U\right|=0$.
\end{itemize}
\item We will call a map $\vy:E\to\ren$ \textbf{countably Lipschitz} on a countable open partition $\class{U}$ of $E$ if, for each $U\in\class{U}$, there exists $K_U<\infty$ such that
\[
  \|\vy(\vx_2)-\vy(\vx_1)\|_{\ren}\le K_U\|\vx_2-\vx_1\|_{\rem},
  \frl\vx_1,\vx_2\in U.
\]
\item We say that $\vy:E\to\ren$ is \textbf{countably injective} on $\class{U}$ if $\vy$ is injective on each $U\in\class{U}$.
\end{enumerate}
\end{definition}
One may note that the graph of a countably Lipschitz $\vy$ can be identified with an $m$-dimensional rectifiable set in $\re^{m+n}$.

\begin{definition}
A map $\vy:\dom\to\ren$ is a \textbf{Lusin map} if for any Lebesgue null set $E\subseteq\dom$, the image $\vy(E)$ is also a Lebesgue null set.
\end{definition}

It will be useful to adopt some terminology related to discrete dynamical systems.
\begin{definition}\label{D:DynSystem}
Let $\vy:\ren\to\ren$ be given.
\begin{enumerate}[(a)]
\item The (\textbf{discrete}) \textbf{dynamical system} generated by $\vy$ is $\bbD=\bbD[\vy]:\ren\times\whls\to\ren$ defined by $\bbD[\vy](\vx,k)=\bbD(\vx,k):=\vy^k(\vx)$, where
\[
  \vy^k(\vx):=\left\{\begin{array}{ll}
    \vx, & k=0,\\
    \vy(\vy^{k-1}(\vx)):=\underbrace{(\vy\circ\cdots\circ\vy)}_{k\text{ times}}(\vx),
      & k\in\nats.
  \end{array}\right.
\]
\item The (\textbf{forward}) \textbf{orbit} of a point $\vx\in\ren$ in $\mathbb{D}$ is $\orb^+(\vx)=\orb^+_{\bbD}(\vx):=\{\vy^k(\vx)\}_{k=0}^\infty$. We similarly define the (\textbf{forward}) \textbf{orbit} of a set $E\subseteq\dom$ in $\bbD$ by $\orb^+(E)=\orb^+_{\bbD}(E):=\{\vy^k(E)\}_{k=0}^\infty$.
\item\label{D:Absorbtion} An open set $U\subseteq\ren$ \textbf{essentially absorbs} a measurable set $E\subseteq\ren$ in $\bbD$ if there exists a finite \textbf{absorption index} $\alpha\in\whls$ defined by
\[
  \alpha:=\min\lc k_0\in\whls
    :\vy^{k}(E)\subset\subset U\text{ a.e., for all }k\ge k_0\rc.
\]
(This is a modification of the usual definition; see~\cite{RobTea:05a}).
\end{enumerate}
\end{definition}

\begin{definition}\label{D:ParamDynSystem}
Let $\Phi\subseteq\ren$ and $\vy:\ren\times\Phi\to\ren$ be given.
\begin{enumerate}[(a)]
\item We call $\{\bbD_{\vzet}\}_{\vzet\in\Phi}$ a \textbf{parameterized} (\textbf{discrete}) \textbf{dynamical system} generated by $\{\vy(\cdot,\vzet)\}_{\vzet\in\Phi}$, with parameter space $\Phi$, if $\bbD_{\vzet}=\bbD[\vy(\cdot,\vzet)]$ is a discrete dynamical system for each $\vzet\in\Phi$.
\item  We say that an open set $U\subseteq\ren$ \textbf{essentially absorbs} a measurable $E$ in $\{\bbD_{\vzet}\}_{\vzet\in\Phi}$ if $U$ essentially absorbs $E$ a.e., for each $\vzet\in\Phi$.
\end{enumerate}
\end{definition}
In the case of a parameterized dynamical system, the absorption index may depend on $\vzet\in\Phi$.

\begin{definition}\label{D:BanachInd}
Given a map $\vy:E\to\ren$ the \textbf{Banach indicatrix} or \textbf{multiplicity function} of $\vy$ at $\vw\in\vy(E)$ over $F\subseteq\ren$ is
\[
  N_{\vy}(\vw,F):=\card(\vy^{-1}(\{\vw\})\cap F).
\]
\end{definition}

The following definition is taken from~\cite{Haj:93a}.
\begin{definition}\label{D:AppDiff}
A map $\vy:E\to\rem$ is \textbf{approximately totally differentiable} at a density point $\vx_0\in\ren$ of $E$ if there exists $\te{F}\in\re^{m\times n}$ such $\vx_0$ is also a density point of
\[
  \lc\vx\in E:\frac{\|\vy(\vx)-\vy(\vx_0)-\te{F}(\vx-\vx_0)\|_{\rem}}
    {\|\vx-\vx_0\|_{\ren}}<\vep\rc,
  \frl\vep>0.
\]
\end{definition}

\section{Introductory Examples}\label{S:IntroExamples}

In this subsection, we present a couple of examples to provide a context in which to describe the key ideas underlying the proofs. Additional, more general, examples are found in Section~\ref{S:GenExamples}. We will use notation and definitions found in Section~\ref{S:Definitions}.

\subsection{Argument for Poincar\'{e} Inequality I}
The first example is a simplified version of Theorem~\ref{T:MainPoinc1}.
\begin{example}\label{Ex:BasicEx1}
Put $\eta:=\frac{2}{n+1}\cdot\frac{\haus^{n-1}(\bll^{n-1})}{\haus^n(\bll^n)}$. Let $0<\delta\le 1$ be given. Define the correspondences $\Psi:\dom\twoheadrightarrow\ren$ by
\[
  \Psi(\vx):=\bll_\delta(\vx)\cap\set{C}_{\frac{\pi}{2}}(\vx;\ve{e}_1)
    =\{\vy\in\bll_\delta(\vx):(\vy-\vx)\bdot\ve{e}_1>0\}.
\]
Set $\bnd:=\ov{\bigcup_{\vx\in\dom}\Psi(\vx)}\bs\dom$. If $u\in L^1(\dom\cup\bnd)$ satisfies $u=0$ a.e. on $\bnd$, then
\begin{equation}\label{E:BasicEx1Poinc}
  \|u\|^2_{L^2(\dom)}
  \le
  \frac{4(1+\diam(\dom))^3}{\eta^2}\lint{\dom}\left|\,
    \flint{\Psi(\vx)}\frac{u(\vy)-u(\vx)}{\delta}\dd\vy\right|^2\dd\vx.
\end{equation}
\end{example}
\begin{remark}
\begin{enumerate}[(a)]
\item We only require $u$ to be defined on $\dom\cup\bnd$, but it can always be extended by zero. As mentioned in the introduction, if $u\in L^1(\ren)$, then the upper bound is well-defined a.e. in $\dom$.
\item We have set the exponent to $p=2$ for the sake of simplicity only. In Example~\ref{Ex:BasicEx1Ext}, we provide the generalization to $1\le p<\infty$ and nonhomogeneous measurable supports. Example~\ref{Ex:SignChange1} provides an inequality similar to~\eqref{E:BasicEx1Poinc} with a sign-changing kernel with symmetric support.
\item The value of $\eta<1$ is the nonzero coordinate of the centroid for a hemisphere of $\bll_1(\ve{0})$. For the first few dimensions, we find $n=1\Rightarrow\eta=\frac{1}{2}$, $n=2\Rightarrow\eta=\frac{4}{3\pi}$, and $n=3\Rightarrow\eta=\frac{3}{8}$.
\end{enumerate}
\end{remark}
Since~\eqref{E:BasicEx1Poinc} is translation invariant, without loss of generality, we assume that $\inf_{\vx\in\dom}\vx\bdot\ve{e}_1=0$. Our argument is based on the one used in the proof for a nonlocal Hardy type inequality in~\cite{TiaDu:17a}. The idea is to design a weight $\gamma\ge 1$ so that~\eqref{E:BasicEx1Poinc} can be established, instead, for $\|u\cdot\gamma^\frac{1}{2}\|^2_{L^2(0,1)}$. Given $0<\lambda<1$,we may use the convexity of $\tau\mapsto\tau^2$ and Jensen's inequality to deduce
\begin{equation}\label{E:BasicEx1Ineq1}
  \lint{\dom}|u(\vx)|^2\gamma(\vx)\dd\vx
  \le
  \frac{1}{1-\lambda}\lint{\dom}\left|\,\flint{\Psi(\vx)}[u(\vy)-u(\vx)]
    \dd\vy\right|^2\gamma(\vx)\dd\vx
  +\frac{1}{\lambda}
  \lint{\dom}\flint{\Psi(\vy)}|u(\vx)|^2\gamma(\vy)
    \dd\vx\dd\vy.
\end{equation}
Here we have relabeled $\vx\leftrightarrow\vy$ in the last term. The goal is to choose $\gamma$ so that the last integral can be absorbed into the left-hand side. Using the Fubini-Tonelli theorem and the assumption that $u=0$ on $\bnd$,
\begin{equation}\label{E:BasicEx1GammaReq}
  \lint{\dom}\flint{\Psi(\vy)}|u(\vx)|^2\gamma(\vy)
    \dd\vx\dd\vy
  =
  \lint{\dom}|u(\vx)|^2\lp\lint{\Psi^{-1}(\vx)}
    \frac{\gamma(\vy)}{|\Psi(\vy)|}\dd\vy\rp\dd\vx.
\end{equation}
The key requirement that $\gamma$ must satisfy is that the term in the parentheses is uniformly smaller that $\gamma(\vx)$. Observing that
\[
  \Psi^{-1}(\vx)=\dom\cap\bll_\delta(\vx)
    \cap\set{C}_{\pi/2}(\vx;-\ve{e}_1)
  \subseteq\{\vy\in\bll_\delta(\vx):(\vy-\vx)\bdot\ve{e}_1<0\},
\]
we see that this requirement is satisfied by any $\gamma$ that is uniformly strictly increasing in the $\ve{e}_1$ direction. In particular, if we define $\gamma(\vx):=1+\vx\bdot\ve{e}_1$ and $\nu=1-\frac{\eta\delta}{1+\diam(\dom)}<\eta<1$, we can argue that
\begin{equation}\label{E:Ex1JensIneq}
  \lint{\Psi^{-1}(\vx)}\frac{\gamma(\vy)}{|\Psi(\vy)|}\dd\vy
  \le\lp1-\frac{\eta\delta}{\gamma(\vx)}\rp\gamma(\vx)
  \le\nu\gamma(\vx),
  \frl\vx\in\dom.
\end{equation}
Returning to~\eqref{E:BasicEx1Ineq1}, using~\eqref{E:BasicEx1GammaReq}, for any $\nu<\lambda<1$, we can absorb the last integral into the lower bound to obtain
\[
  \underbrace{\lint{\dom}|u(\vx)|^2\gamma(\vx)\dd\vx}_{\ge\|u\|^2_{L^2(\dom)}}
  \le
  \frac{\lambda}{(\lambda-\nu)(1-\lambda)}
  \lint{\dom}\left|\,\flint{\Psi(\vx)}[u(\vy)-u(\vx)]\dd\vy\right|^2
    \gamma(\vx)\dd\vx.
\]
Since $\gamma\le1+\diam(\dom)$ on $\dom$, the inequality in~\eqref{E:BasicEx1Poinc} follows from making the optimal choice for $\lambda=\sqrt{\nu}$ and using $\sqrt{1-\vep}\le1-\frac{1}{2}\vep$, for $0<\vep<1$. One issue with the argument, as presented, is that unless $|u|^2\cdot\gamma\in L^1(\dom)$, the last term in~\eqref{E:BasicEx1Ineq1} is not finite and cannot be absorbed. This is overcome using an $L^\infty$ sequence that approaches $u$ and the Lebesgue dominated convergence theorem. It is at this step where the second part of~(BC) is needed.

Theorem~\ref{T:MainPoinc1} generalizes this example in three main ways. Define $Z:=\bll_\delta\cap\set{C}_{\frac{\pi}{2}}(\ve{0};\ve{e}_1)$. For this example, using the definition of $\gamma$ in the proof, the theorem yields a slight improvement of~\eqref{E:BasicEx1Poinc}:
\[
  \|u\|_{L^2(\dom)}^2\le\frac{4(1+\diam(\dom))^2}{\eta^2}\|\mathbb{G}_\delta u\|_{L^2(\dom)}^2,
\]
with
\[
  \mathbb{G}_\delta u(\vx)
  :=
  \flint{Z}\frac{u(\vx+\vz)-u(\vx)}{\delta}
    \lp1+\vx\bdot\ve{e}_1\rp^\frac{1}{2}\dd\vz.
\]
In either case the kernel component $\rho$ is homogeneous and has an unchanging sign. In addition to allowing a general exponent $1\le p<\infty$, the general result allows nonconstant, sign-changing $\rho$ with nonhomogeneous support. (See Examples~\ref{Ex:SignChange1} and~\ref{Ex:SignChange2} in Section~\ref{S:GenExamples}.) As mentioned in the introduction, with a nonhomogeneous kernel, identifying $\Psi^{-1}$ and an appropriate $\gamma$ is nontrivial. Nevertheless, one can even have $\Psi(\vx)\subseteq\bll_{d_\bnd(\vx)}(\vx)$, so $\lim_{\vx\to\vx_0}|\Psi(\vx)|=0$ for each $\vx_0\in\bnd$. This allows one to produce Poincar\'{e} inequalities with $\bnd$ a set with measure zero and even Hausdorff dimension smaller than $n$.

\subsection{Argument for Poincar\'{e} Inequality II}
Next we present an example of Theorem~\ref{T:MainPoinc2} (Poincar\'{e} Inequality II).
\begin{example}\label{Ex:BasicEx2}
Let $1\le p<\infty$, $0<\vep<\delta$, and an open $Z\subseteq\ann_{(\vep,\delta)}(\ve{0})$ be given. Set $\bnd:=\bigcup_{\vx\in\dom}\ov{(\vx+Z)}\bs\dom$. If $u\in L(\dom\cup\bnd)$ and $u=0$ a.e. on $\bnd$, then
\begin{equation}\label{E:BasicEx2Poinc}
  \lint{\dom}|u(\vx)|^p\dd\vx
  \le
  \diam(\dom)^p\lint{\dom}\flint{Z}
    \frac{|u(\vx+\vz)-u(\vx)|^p}{\|\vz\|^p_{\ren}}\dd\vz\dd\vx.
\end{equation}
\begin{remark}
\begin{enumerate}[(a)]
\item Unlike Example~\ref{Ex:BasicEx1}, we do not require $u\in L^1(\dom\cup\bnd)$.
\item Using the compactness result in~\cite{Pon:04a}, inequality~\eqref{E:BasicEx2Poinc} has been established elsewhere (e.g.~\cite{AksUnl:14a}). As explained in Section~\ref{SS:Significance}, however, the argument is indirect and requires $\delta>0$ sufficiently small and the support $Z$ to be rotationally symmetric.
\item In~\cite{BouBreMir:01a}, it is further shown that, if $u\in W^{1,p}(\dom)$ and $\vep\to0^+$ as $\delta\to 0^+$, then there exists a constant $0<C_{p,n}<\infty$ such that
\[
  \lim_{\delta\to0^+}\flint{Z}
    \frac{|u(\vx+\vz)-u(\vx)|^p}{\|\vz\|^p_{\ren}}\dd\vz
    =C_{p,n}\|\partial_{\vx}u(\vx)\|^p_{\ren}
    \text{ in }L^1(\ren).
\]
\item This example is a special case of Example~\ref{Ex:GFlow}.
\end{enumerate}
\end{remark}
\end{example}
The idea for the argument is to associate a discrete dynamical system $\bbD_{\vz}$ with each $\vz\in Z$. For each $k\in\whls$, we define $\vy,\vy^k:\ren\times Z\to\ren$ by
\begin{equation}\label{D:BasicEx2yk}
  \vy(\vx,\vz):=\lc\begin{array}{ll}
    \vx+\vz, & \vx\in\dom,\\
    \vx, & \vx\notin\dom,
  \end{array}\right.
  \nd
  \vy^k(\vx,\vz):=\lc\begin{array}{ll}
    \vx, & k=0,\\
    (\vy\circ\vy^{k-1})(\vx,\vz), & k\in\nats.
  \end{array}\right.
\end{equation}
Since $\|\vz\|_{\ren}>0$, for each $\vx\in\dom$, there is an index $k_0=k_0(\vx,\vz)\le\frac{\diam(\dom)}{\|\vz\|_{\ren}}$ such that $\vy^k(\vx,\vz)\notin\dom$, for all $k\ge k_0$. The orbit $\{\vy^k(\vx,\vz)\}_{k=0}^\infty$, in $\bbD_{\vz}$, consists of a translation of $\vx$ by $k\vz$ for $k\le k_0$ and is stationary at $\vx+k_0\vz$, for all $k>k_0$. Thus $\ren$, or $\bnd$ to be precise, is an absorption set in $\bbD_{\vz}$ for $\dom$, with absorbtion index
\begin{equation}\label{E:BasicEx2Absorp}
  \alpha=\alpha(\vz)=\max_{\vx\in\dom}k_0(\vx,\vz)
  \le\frac{\diam(\dom)}{\|\vz\|_{\ren}}<\infty.
\end{equation}
Given $\vz\in Z$, we find $\vy^\alpha(\dom,\vz)\subseteq\bnd$. Since $u=0$ a.e. on $\bnd$, we may write
\begin{equation}\label{E:BasicEx2Ineq1}
  \lint{\dom}|u(\vx)|^p\dd\vx
  \le
  \alpha^{p-1}\sum_{k=0}^{\alpha-1}\underbrace{\lint{\dom}
    |u(\vy^{k+1}(\vx,\vz))-u(\vy^k(\vx,\vz))|^p\dd\vx}_{I_k}.
\end{equation}
By definition~\eqref{D:BasicEx2yk}, the integrand for $I_k$ vanishes whenever $\vx\notin\dom$. Set $X_k:=\vy^{-k}(\dom\cap\vy^k(\dom,\vz)\times\{\vz\})$, the change of variables $\vx+k\vz\mapsto\vx$ delivers
\begin{align}
\nonumber
  I_k
  &=
  \lint{X_k}|u(\vx+\vz+k\vz)-u(\vx+k\vz)|^p\dd\vx
  =
  \lint{\dom\cap\vy^k(\dom,vz)}|u(\vx+\vz)-u(\vx)|^p\dd\vx\\
\label{E:ExChangeOfVar}
  &\le
  \lint{\dom}|u(\vx+\vz)-u(\vx)|^p\dd\vx.
\end{align}
Incorporating this and the bound on the absorption index in~\eqref{E:BasicEx2Absorp} into~\eqref{E:BasicEx2Ineq1} yields
\[
  \lint{\dom}|u(\vx)|^p\dd\vx
  \le
  \diam(\dom)^p\lint{\dom}\frac{|u(\vx+\vz)-u(\vx)|^p}{\|\vz\|^p}\dd\vx
\]
To conclude ~\eqref{E:BasicEx2Poinc}, we integrate both sides of the above inequality with respect to $\vz$ over $Z$ and invoke the Fubini-Tonelli theorem.

For the general statement for Theorem~\ref{T:MainPoinc2}, we extend the argument presented for Example~\ref{Ex:BasicEx2} in three directions. First, the support of the kernel is not required to be homogeneous. Second, we augment it with a version of Poincar\'{e} Inequality I. The dynamical system argument is used to obtain an intermediate inequality for $\|u\|_{L^p(\dom\bs\ov{U})}$, where $U$ is an open absorbing set. If $u=0$ a.e. on $U$, then the argument is done. Otherwise, a specialized version of Theorem~\ref{T:MainPoinc1} is used to produce a bound for $\|u\|_{L^p(\ov{U})}$. Finally, there is the option of restricting the domain of integration in the lower bound to a measurable subset of $\dom$ and a, potentially smaller, outer domain of integration in the upper bound.

\begin{figure}
\begin{center}
\includegraphics[width=.8\textwidth,trim=145 500 145 125,clip]{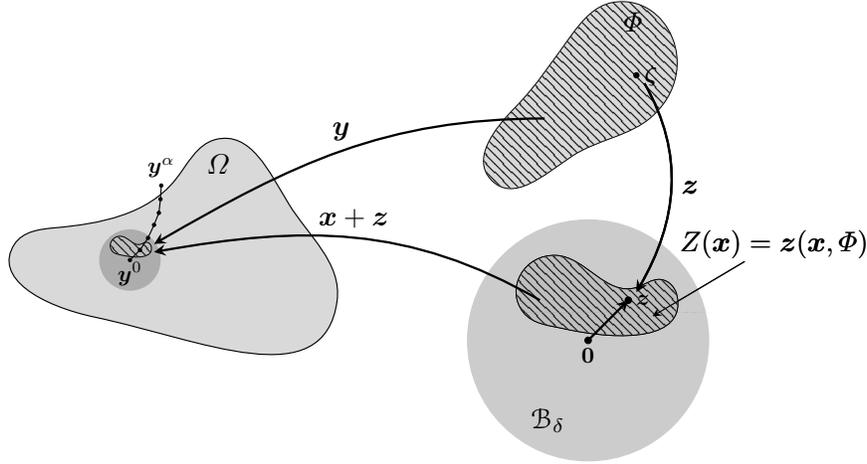}
\end{center}
\caption{Setup for the proof of Poincar\'{e} Inequality II}\label{F:Setup}
\end{figure}
To extend the argument to kernels with inhomogeneous supports, we introduce a parameter space $\Phi\subset\ren$ (Figure~1). For each $\vx\in\dom$ and $\vzet\in\Phi$ there corresponds a unique $\vz(\vx,\vzet)\in\supp(\mu(\vx,\cdot))=:Z(\vx)$. We define a map $\vy:\ren\times\Phi\to\ren$ using~\eqref{D:BasicEx2yk} and composition with $\vz$. Associated with $\vy$ is a parameterized family of discrete dynamical systems $\{\bbD_\vzet\}_{\vzet\in\Phi}$. For each $\vzet\in\Phi$, the forward orbit of $\vx\in\dom$ is $\{\vy^k(\vx,\vzet)\}_{k=0}^\infty$. In this setting, we assume $\bnd$ is a common absorbing set for $\dom$. The outline of the proof for the Poincar\'{e} inequality is essentially the same as presented in Example~\ref{Ex:BasicEx2}.
\begin{enumerate}
  \item\label{PhiDecomp} The parameter set $\Phi$ is decomposed into subsets $\Phi'_\alpha$ whose elements have a common associated absorption index $\alpha$, which was introduced in~\eqref{E:BasicEx2Absorp}.
  \item\label{xChangeOfVar} We produce the integrals in~\eqref{E:BasicEx2Ineq1} and use Theorem~\ref{T:ChangeVar} (a general change of variables theorem) to obtain the analogue of~\eqref{E:ExChangeOfVar}.
  \item\label{zChangeOfVar} We integrate over $\Phi'_\alpha$, apply the Fubini-Tonelli theorem, and use Theorem~\ref{T:ChangeVar} to transform the integral over $\Phi'_\alpha$ into one over $Z'_\alpha(\vx)\subseteq Z(\vx)$.
  \item\label{AbsorpStep} The Poincar\'{e} inequality is obtained by summing the resulting inequalities over the absorption indices.
\end{enumerate}
A minor refinement of the argument allows one to work with a measurable subset $\dom'\subseteq\dom$ and yields
\[
  \|u\|^p_{L^p(\dom')}
  \le
  C\lint{\dom\cap Y'}\lint{Z(\vx)}|u(\vx+\vz)-u(\vx)|^p\mu(\vx,\vz)\dd\vz\dd\vx.
\]
Here $Y'=\bigcup_{k=0}^\infty\vy^k(\dom'\times\Phi)$ is the union of all the forward orbits of $\dom'$.

In Example~\ref{Ex:BasicEx2}, the parameter set was $Z$ itself, so the second change of variables was not needed, and~\eqref{E:BasicEx2Absorp} provided a direct connection between $\vz$ and $\alpha$, so step~\ref{AbsorpStep} was also unnecessary. Steps~\ref{xChangeOfVar} and~\ref{zChangeOfVar} are the content of Lemma~\ref{L:OpenAbsorptionParamSet}. For the change of variables, Theorem~\ref{T:ChangeVar} is flexible enough to allow a null set $\dom_0$ across which $Z(\vx)$ changes discontinuously (see Definition~\ref{D:CountCont} and Example~\ref{Ex:Discontinuous}). A Lusin condition is imposed on $\vy$ to ensure this null set of discontinuities remains a null set under compositions of $\vy$. Another issue that arises is that $\dom_0$ might not be absorbed by $\bnd$. This leads to the introduction of essentially absorbing sets (Definition~\ref{D:DynSystem}(\ref{D:Absorbtion})). Lemma~\ref{L:Phi'Measurable} shows that the presence of a non-absorbed null set does not affect the measurability of $\Phi'_\alpha$.

\section{Supporting Theorems and Lemmata}\label{SS:Lemmas}

In this section, we collect several results that provide key components in the proofs of the main results. Results found in the literature first subsection, we collect several results from the literature. In the second subsection, some new results are proved.

\subsection{Cited Results}\label{SS:CitedResults}

The first statement follows from the discussion in~\cite{BojIwa:83a} (p. 304).
\begin{lemma}\label{L:LusinImages}
If $\vy\in\cnt(\dom;\ren)$ is a Lusin map, then $\vy(E)\in\class{L}(\ren)$ whenever $E\subseteq\class{L}(\dom)$.
\end{lemma}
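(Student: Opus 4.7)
The plan is to decompose $E$ into a ``nice'' measurable piece on which continuity alone gives measurability of the image and a null piece on which the Lusin hypothesis does the work. Both parts are then simple to reassemble.

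First I would invoke inner regularity of Lebesgue measure on $\ren$: every Lebesgue measurable set $E \subseteq \dom$ admits a decomposition $E = K \cup N$, where $K = \bigcup_{j=1}^\infty K_j$ is a countable union of compact subsets of $\dom$ (an $F_\sigma$ of compact pieces, obtained by combining inner regularity with the $\sigma$-compactness of $\dom \subseteq \ren$) and $N$ is a Lebesgue null set. Consequently $\vy(E) = \bigcup_{j=1}^\infty \vy(K_j) \cup \vy(N)$.

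Next, because $\vy$ is continuous on $\dom$, each image $\vy(K_j)$ is a continuous image of a compact set and hence compact in $\ren$. In particular each $\vy(K_j)$ is closed, so Borel, so Lebesgue measurable. The Lusin hypothesis then handles the remaining term: since $|N| = 0$, the definition of Lusin map gives $|\vy(N)| = 0$, and a Lebesgue null set is trivially Lebesgue measurable.

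Finally I would observe that $\vy(E)$ has been exhibited as a countable union of Lebesgue measurable sets, so it lies in $\class{L}(\ren)$. There is no real obstacle here; the only thing one needs to be careful about is that the $F_\sigma$ approximation of $E$ is built from compact subsets of $\dom$ (not merely closed subsets of $\dom$), so that continuity of $\vy$ on $\dom$ — rather than on its closure — suffices to conclude each $\vy(K_j)$ is compact. This is why one appeals to inner regularity via compact sets rather than the more naive ``$E$ is $F_\sigma$ up to a null set'' statement.
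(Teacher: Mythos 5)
Your argument is correct and is the standard proof of this fact. The paper itself does not give a proof but simply cites the discussion in Bojarski--Iwaniec (p.~304), which rests on the same inner-regularity decomposition $E = \bigl(\bigcup_j K_j\bigr) \cup N$ with $K_j$ compact and $N$ null, so you have supplied essentially the intended argument; the remark about needing compact (rather than merely relatively closed) pieces so that continuity on $\dom$ alone suffices is the right point of care.
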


Regarding the measurability of the Banach indicatrix function, we have~\cite{Evs:18a} (see also~\cite{BojIwa:83a,Haj:93a})
\begin{lemma}\label{L:IndicatrixMeas}
If $\vy:E\to\ren$ is a Lebesgue measurable Lusin map, with $E\in\class{L}(\rem)$, then for each measurable $E\subseteq\dom$, the function $\vw\mapsto N_{\vy}(\vw,E)$ is Lebesgue measurable on $\ren$.
\end{lemma}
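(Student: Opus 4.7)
The plan is to realize $\vw\mapsto N_{\vy}(\vw,E)$ as a pointwise monotone limit of countable sums of characteristic functions of Lebesgue measurable sets, using a refining sequence of measurable partitions of $E$ of vanishing mesh. Concretely, for each $k\in\nats$ I would let $\{Q_{k,j}\}_{j=1}^{\infty}$ be the partition of $\rem$ into half-open dyadic cubes of side-length $2^{-k}$ and set $E_{k,j}:=E\cap Q_{k,j}$, so that $\{E_{k,j}\}_{j=1}^{\infty}$ is a countable Lebesgue measurable partition of $E$ and the level-$(k+1)$ partition refines the level-$k$ partition. I would then define
\[
  N_k(\vw):=\sum_{j=1}^\infty\chi_{\vy(E_{k,j})}(\vw),\qquad\vw\in\ren,
\]
so that $N_k(\vw)$ equals the number of level-$k$ pieces meeting $\vy^{-1}(\{\vw\})$, and $N_k(\vw)$ is nondecreasing in $k$ by refinement.

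The hard part will be verifying that each image $\vy(E_{k,j})$ is Lebesgue measurable, so that the summands above are measurable functions. Lemma~\ref{L:LusinImages} gives this immediately when $\vy$ is continuous, but here $\vy$ is only a measurable Lusin map, so a preliminary reduction is needed. I would address this by invoking Lusin's theorem to decompose $E=N_0\cup\bigcup_{i=1}^\infty K_i$, where $|N_0|=0$, each $K_i$ is compact, and $\vy|_{K_i}$ is continuous. The Lusin property yields $|\vy(E_{k,j}\cap N_0)|=0$, and Lemma~\ref{L:LusinImages} applied to each restriction $\vy|_{K_i}$ (which inherits the Lusin property) gives $\vy(E_{k,j}\cap K_i)\in\class{L}(\ren)$. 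Hence $\vy(E_{k,j})\in\class{L}(\ren)$, and $N_k$ is Lebesgue measurable as a countable sum of measurable indicators.

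Finally, I would show $N_k(\vw)\nearrow N_{\vy}(\vw,E)$ pointwise. The upper bound $N_k(\vw)\le N_{\vy}(\vw,E)$ holds for every $k$: for each $j$ with $\vy^{-1}(\{\vw\})\cap E_{k,j}\ne\emptyset$ one picks a distinct preimage in that cube, and distinctness follows from the disjointness of the $E_{k,j}$. For the matching lower bound, given distinct points $\vx_1,\dots,\vx_\ell\in\vy^{-1}(\{\vw\})\cap E$, once the cube diameter $\sqrt{m}\,2^{-k}$ is smaller than $\tfrac{1}{2}\min_{i\ne j}\|\vx_i-\vx_j\|_{\rem}$, the $\vx_i$ lie in $\ell$ different level-$k$ cubes and $N_k(\vw)\ge\ell$; letting $\ell\to N_{\vy}(\vw,E)$ yields the convergence in $\whls\cup\{\infty\}$. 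Measurability of $N_{\vy}(\cdot,E)$ then follows as the pointwise monotone limit of measurable functions.
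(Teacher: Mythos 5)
Your proof is correct, but note that the paper does not actually prove this lemma---it is stated in Section~\ref{SS:CitedResults} (``Cited Results'') and attributed to the literature (Evseev, Bojarski--Iwaniec, Haj\l{}asz), so there is no in-paper argument to compare against. Your argument is a valid self-contained derivation, and it is in fact the standard one: build $N_{\vy}(\cdot,E)$ as an increasing limit of the ``counting'' functions $N_k$ attached to a refining sequence of dyadic partitions, with measurability of the summands $\chi_{\vy(E_{k,j})}$ coming from the Lusin property. The monotonicity and pointwise convergence steps are handled cleanly, including the $\card=\infty$ case.

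One small point of rigor worth tightening. Lemma~\ref{L:LusinImages} as stated in the paper asserts measurability of images for a \emph{continuous} Lusin map on an \emph{open} set $\dom$, whereas you invoke it for the restrictions $\vy|_{K_i}$ with $K_i$ compact. The conclusion is still true, but it does not follow by literal citation; you should either unpack it (any $F\in\class{L}(K_i)$ is a null set union a countable union of compacts; continuous images of compacts are compact; the Lusin property kills the null part) or remark that the lemma holds verbatim when the domain is compact. Relatedly, Lusin's theorem produces \emph{closed} sets on which $\vy$ is continuous; if $E$ has infinite measure you should first exhaust $E$ by bounded measurable pieces (or intersect the closed sets with closed balls) to actually obtain the compact $K_i$'s you use. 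With these two routine repairs, the argument is complete.
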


The following is the statement of Theorem 2 in~\cite{Haj:93a} (see also~\cite{BojIwa:83a}).
\begin{theorem}[Change of Variables Formula]\label{T:ChangeVar}
Let $U\subseteq\ren$ be an open set, and suppose that $\vy:U\to\ren$ is a Lusin map and approximately totally differentiable a.e. in $U$. Then given any measurable $E\subseteq U$ and any $f\in L+(\vy(E))$, the functions $\vx\mapsto f(\vy(\vx))|\det\partial\vy(\vx)|$ and $\vw\mapsto f(\vw)N_{\vy}(\vw,E)$ are well-defined and measurable and
\begin{equation}\label{E:ChangeVar}
  \lint{E}f(\vy(\vx))|\det\partial\vy(\vx)|\dd\vx
  =
  \lint{\vy(E)}f(\vw)N_{\vy}(\vw,E)\dd\vw.
\end{equation}
\end{theorem}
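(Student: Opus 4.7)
The plan is to reduce the general identity to the classical area formula for Lipschitz maps via a Whitney-type decomposition of $\vy$. As a first step, Lemma~\ref{L:LusinImages} gives $\vy(E)\in\class{L}(\ren)$ and Lemma~\ref{L:IndicatrixMeas} gives measurability of $\vw\mapsto N_{\vy}(\vw,E)$, so both sides of~\eqref{E:ChangeVar} are meaningful. By the standard monotone-convergence approximation of nonnegative measurable $f$ by simple functions, it suffices to prove the formula for $f=\chi_A$ with $A\in\class{L}(\vy(E))$; that is, to show
\[
  \lint{E\cap\vy^{-1}(A)}|\det\partial\vy(\vx)|\dd\vx
  =\lint{A}N_{\vy}(\vw,E)\dd\vw.
\]

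Next, I would invoke the approximately-differentiable analogue of the Whitney extension theorem: off a null set $N_0\subseteq E$, there is a countable disjoint measurable decomposition $E\bs N_0=\bigcup_{k\in\nats}E_k$ such that $\vy|_{E_k}$ coincides with the restriction of a globally Lipschitz map $\vy_k:\ren\to\ren$ whose classical derivative equals $\partial\vy$ a.e.\ on $E_k$. The classical Federer area formula for Lipschitz maps then applies on each $E_k$ separately and yields the indicator-function version of~\eqref{E:ChangeVar} with $\vy_k$ and $E_k$ in place of $\vy$ and $E$. Since $|\det\partial\vy|$ and $\partial\vy_k$ agree a.e.\ on $E_k$, the identity also holds with $\vy$ in place of $\vy_k$.

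To assemble the pieces, write $N_{\vy}(\vw,E\bs N_0)=\sum_{k}N_{\vy}(\vw,E_k)$, sum the per-piece identities in $k$, and apply monotone convergence to both sides. The main obstacle, and the reason the hypotheses are needed, is handling $N_0$ together with the critical set $E^\ast:=\{\vx\in E:\det\partial\vy(\vx)=0\}$. Here the Lusin property~(N) is essential: it guarantees $|\vy(N_0)|=0$, so $N_0$ contributes nothing to the right-hand side, while its contribution on the left is automatically zero. On $E^\ast$ the left-hand integrand vanishes; for the right-hand side one applies the area formula to the Lipschitz approximations $\vy_k$ to deduce $|\vy(E^\ast)|=0$, so $N_{\vy}(\cdot,E^\ast)$ is supported on a Lebesgue null subset of $\ren$ and does not contribute to the right-hand integral. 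Combining these observations yields the stated formula.
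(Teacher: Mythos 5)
The paper does not prove Theorem~\ref{T:ChangeVar} at all; it is quoted verbatim as Theorem~2 of Haj\l{}asz~\cite{Haj:93a} (cf.\ also~\cite{BojIwa:83a}), so there is no internal argument to compare yours against. That said, your sketch does reproduce the canonical route used in those references: reduce by monotone convergence to $f=\chi_A$, exhaust $E$ off a Lebesgue-null set $N_0$ by countably many pieces $E_k$ on which $\vy$ agrees with a global Lipschitz extension (the Federer~3.1.8 decomposition for a.e.\ approximately differentiable maps, followed by Kirszbraun/Whitney extension), apply the Lipschitz area formula on each piece, sum the identities, and use the Lusin condition~(N) to dispose of $N_0$ on the image side. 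This is exactly where the Lusin hypothesis earns its keep, and you identify that correctly. Two small remarks. First, Lemma~\ref{L:LusinImages} as stated in the paper requires $\vy$ to be \emph{continuous}, a hypothesis Theorem~\ref{T:ChangeVar} does not impose; the measurability of $\vy(E)$ and of $\vw\mapsto N_{\vy}(\vw,E)$ should instead come from the Lipschitz decomposition itself (each $\vy(E_k)$ is Lebesgue measurable as a Lipschitz image, and $\vy(N_0)$ is null by the Lusin property) together with Lemma~\ref{L:IndicatrixMeas}, which asks only for measurability. Second, your separate treatment of the critical set $E^\ast=\{\det\partial\vy=0\}$ is redundant: once $\vy$ is replaced on each $E_k$ by a Lipschitz map, Federer's area formula already accounts for the set where the Jacobian vanishes (the Sard-type collapse on the image side is built in), so $E^\ast$ needs no additional argument. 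Neither point is a gap; the overall outline is correct and is essentially the proof in~\cite{Haj:93a}.
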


We will also use a corollary to the coarea formula, taken from~\cite{EvaGar:15a}.
\begin{corollary}\label{C:DistCoarea}
Let $0<a<b<\infty$ and a nonempty compact set  $G\subset\ren$ be given. Then, for any non-negative $f\in L^1(\ann_{(a,b)}(\Gamma)$,
\[
  \lint{\ann_{(a,b)}(G)}f(\vx)\dd\vx
  =\lint{(a,b)}\lint{\sph_\tau(G)}f(\vw)\dd\Haus^{n-1}(\vw)\dd\tau.
\]
\end{corollary}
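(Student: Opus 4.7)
\smallskip
\noindent\textbf{Proof proposal.} The plan is to reduce this to the standard coarea formula applied to the distance function $d_G$. Recall from the Definitions section that $d_G$ is Lipschitz on $\ren$, differentiable a.e., and satisfies $\|\partial_{\vx} d_G\|_{\ren}=1$ a.e.\ on $\ren$. The sub-level and level structure of $d_G$ is exactly what appears in the statement: by the definitions given, $d_G^{-1}((a,b))=\ann_{(a,b)}(G)$ and $d_G^{-1}(\{\tau\})=\sph_\tau(G)$ for each $\tau>0$.

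The coarea formula (in the Lipschitz form found, e.g., in Evans--Gariepy) states that for any Lipschitz $g:\ren\to\re$ and any non-negative measurable $h$,
\[
  \lint{\ren} h(\vx)\,|\partial_{\vx}g(\vx)|\,\dd\vx
  =\lint{\re}\lint{g^{-1}(\{\tau\})} h(\vw)\,\dd\Haus^{n-1}(\vw)\,\dd\tau.
\]
First I would extend $f\in L^1(\ann_{(a,b)}(G))$ to all of $\ren$ by setting $h:=f\cdot\chi_{\ann_{(a,b)}(G)}$. Applying the coarea formula to $g=d_G$ and $h$, the factor $|\partial_{\vx}d_G|$ on the left equals $1$ a.e.\ on the set where $h\neq 0$, so the left-hand side is $\int_{\ann_{(a,b)}(G)}f(\vx)\,\dd\vx$. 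On the right-hand side, $h$ vanishes outside the preimage $d_G^{-1}((a,b))$, so only $\tau\in(a,b)$ contribute, and within each such slice $h=f$. Substituting $\sph_\tau(G)$ for $d_G^{-1}(\{\tau\})$ yields the claimed identity.

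The only subtle point is to verify the measurability required by Fubini--Tonelli: one must know that $\tau\mapsto\int_{\sph_\tau(G)} f\,\dd\Haus^{n-1}$ is measurable and that $\Haus^{n-1}(\sph_\tau(G))<\infty$ for a.e.\ $\tau\in(a,b)$. Both facts are standard consequences of the coarea formula itself (they are part of its statement), so no new work is required. Hence I do not anticipate any genuine obstacle; this corollary is a direct specialization of the coarea formula to the distance function, and the proof is essentially a one-line invocation together with the identification of the relevant preimages.
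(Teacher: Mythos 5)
Your proof is correct and matches the paper's intent: the paper does not actually prove this statement but cites it directly from Evans--Gariepy as ``a corollary to the coarea formula,'' and your derivation (apply the Lipschitz coarea formula to $g=d_G$ with $h=f\chi_{\ann_{(a,b)}(G)}$, using $|\partial_{\vx}d_G|=1$ a.e.\ off $G$ and the identification $d_G^{-1}(\{\tau\})=\sph_\tau(G)$) is exactly the standard one-step reduction behind that citation.
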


The following theorem is proved in~\cite{Pon:87a}.
\begin{theorem}
If $U\subseteq\ren$ is an open set and $\vy\in\cnt(U;\ren)$ is differentiable a.e. in $\dom$, then the following are equivalent:
\begin{itemize}
\item[$\bullet$] $|\vy^{-1}(E)|=0$ for all measurable sets $E\subseteq\dom$ that have zero measure.
\item[$\bullet$] $\rank\partial_{\vx}\vy(\vx)=n$ for a.e. $x\in\dom$
\end{itemize}
\end{theorem}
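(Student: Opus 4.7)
The plan is to combine a $\cnt^1$-Lusin approximation of almost-everywhere differentiable continuous maps with Sard's theorem (for the direction $\Rightarrow$) and the inverse function theorem (for the direction $\Leftarrow$). The key tool is a classical result of Federer/Michael: for any map differentiable a.e.\ on an open set $U$, there exist pairwise disjoint closed sets $\{F_k\}_{k\in\nats}\subseteq U$ and $\cnt^1$ maps $\vg_k:\ren\to\ren$ such that $|U\bs\bigcup_k F_k|=0$, and on each $F_k$ both $\vy=\vg_k$ and $\partial_{\vx}\vy=\partial_{\vx}\vg_k$ hold.

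For the $(\Rightarrow)$ direction I would argue by contradiction. Set $S:=\{\vx\in U:\vy\text{ differentiable at }\vx,\;\rank\partial_{\vx}\vy(\vx)<n\}$ and suppose $|S|>0$. Apply the Lusin decomposition; every point of $F_k\cap S$ is a critical point of the globally defined $\cnt^1$ map $\vg_k$ (its Jacobian rank being strictly less than $n$), so Sard's theorem yields $|\vg_k(F_k\cap S)|=0$, and hence $|\vy(F_k\cap S)|=0$ since $\vy=\vg_k$ on $F_k$. Taking $E:=\bigcup_k\vy(F_k\cap S)$ produces a measurable null set. Its preimage under $\vy$ contains $\bigcup_k(F_k\cap S)$, which differs from $S$ by a null set and therefore has measure at least $|S|>0$, contradicting the $N^{-1}$ condition.

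For the $(\Leftarrow)$ direction, assume $\rank\partial_{\vx}\vy=n$ almost everywhere and let $E\subseteq U$ be any measurable null set. Refine the Lusin decomposition so each $F_k$ lies in the full-rank set (possible since its complement is null). Then $\det\partial\vg_k$ is nonzero on $F_k$ and, because $\vg_k$ is $\cnt^1$, also in some open neighborhood of every point of $F_k$ by continuity of the determinant. The inverse function theorem then renders $\vg_k$ a $\cnt^1$-diffeomorphism on such neighborhoods, which in particular satisfies the $N^{-1}$ property there. Covering each $F_k$ by countably many such neighborhoods gives $|F_k\cap\vg_k^{-1}(E)|=0$, and since $\vy=\vg_k$ on $F_k$, also $|F_k\cap\vy^{-1}(E)|=0$. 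Summing over $k$ and observing that the remaining portion of $\vy^{-1}(E)$ lies in the null exceptional set $U\bs\bigcup_k F_k$ yields $|\vy^{-1}(E)|=0$.

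The main obstacle is the careful invocation of the $\cnt^1$-Lusin approximation for merely continuous, a.e.-differentiable maps—as opposed to the more commonly cited Lipschitz or Sobolev versions—which is the reason the implication is nontrivial despite the elementary flavor of the two directions. A secondary subtlety, handled above by intersecting with the full-rank set before applying the inverse function theorem, is that $\vy$ itself is not a priori $\cnt^1$, so all analytic tools (Sard, IFT, local invertibility) must be applied through the smooth surrogates $\vg_k$ and then transferred to $\vy$ via the equality $\vy|_{F_k}=\vg_k|_{F_k}$. Notably, Theorem~\ref{T:ChangeVar} cannot be used as a shortcut for $(\Leftarrow)$ because its Lusin hypothesis is the property $N$, not the $N^{-1}$ property under consideration.
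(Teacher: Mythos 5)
The paper does not prove this theorem; it is cited from Ponomarev~\cite{Pon:87a} and used only through Corollary~\ref{C:LipPreImage}, so there is no in-paper proof to compare against. Your argument is correct and, as far as I can tell, reconstructs the standard route to Ponomarev's characterization. The $\cnt^1$-Lusin decomposition (Whitney 1951; Federer, GMT~3.1.16) does apply to a continuous map that is a.e.\ differentiable, and both directions are handled properly through the surrogates $\vg_k$: for $(\Rightarrow)$, Sard's theorem in the equal-dimension case needs only $\cnt^1$ regularity, the image sets $\vg_k(F_k\cap S)$ are then null (hence Lebesgue measurable), and $\bigcup_k(F_k\cap S)$ agrees with $S$ up to a null set, so the contradiction stands; for $(\Leftarrow)$, the inverse function theorem on the full-rank neighborhoods and the completeness of Lebesgue measure (so that the part of $\vy^{-1}(E)$ inside the exceptional set $U\setminus\bigcup_kF_k$ is automatically null) close the argument. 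Two cosmetic remarks: the clause ``$E\subseteq\dom$'' and the phrase ``a.e.\ in $\dom$'' in the theorem statement are evidently typos for the codomain $\ren$ and for $U$ respectively (your construction produces $E\subseteq\ren$, which matches the intent and the way Corollary~\ref{C:LipPreImage} is phrased), and your closing observation that Theorem~\ref{T:ChangeVar} encodes property $N$ rather than $N^{-1}$, so it cannot shortcut the $(\Leftarrow)$ direction, is exactly the right distinction.
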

We will need the immediate
\begin{corollary}\label{C:LipPreImage}
Suppose $\vy\in\cnt(U;\ren)$ is locally Lipschitz and $|\det\partial_{\vx}\vy(\vx)|\neq 0$ for a.e. $\vx\in U$. Then $|\vy^{-1}(E)|=0$, for all $E\in\class{L}(U)$ with zero measure.
\end{corollary}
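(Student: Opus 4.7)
The plan is to observe that this corollary is essentially a direct specialization of the preceding theorem, with the two hypotheses of that theorem verified via two standard facts. The preceding theorem asserts that for a continuous map differentiable a.e., the property ``$|\vy^{-1}(E)| = 0$ for every measurable null set $E$'' is equivalent to the rank condition $\rank\partial_{\vx}\vy(\vx) = n$ a.e. Our task reduces to verifying that the hypotheses of the corollary imply both (i) differentiability a.e., and (ii) the rank condition.

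For (i), I would invoke Rademacher's theorem: since $\vy$ is locally Lipschitz on the open set $U \subseteq \ren$, it is (classically) differentiable at $\Leb^n$-almost every point of $U$. In particular, $\vy$ is approximately totally differentiable a.e., so the differentiability hypothesis of the preceding theorem is met. For (ii), at each point $\vx \in U$ where $\vy$ is differentiable in the classical sense and $|\det\partial_{\vx}\vy(\vx)| \neq 0$, the Jacobian matrix $\partial_{\vx}\vy(\vx) \in \renn$ is invertible and therefore has full rank $n$. The set of points where $\vy$ is classically differentiable has full measure in $U$ by Rademacher, and by hypothesis the set $\{\vx \in U : |\det\partial_{\vx}\vy(\vx)| \neq 0\}$ also has full measure in $U$. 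Their intersection has full measure, so $\rank\partial_{\vx}\vy(\vx) = n$ for a.e. $\vx \in U$.

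With both conditions established, the preceding theorem directly delivers $|\vy^{-1}(E)| = 0$ for every measurable null set $E$, which is the conclusion. There is no genuine obstacle here; the only subtlety worth flagging is the distinction between classical and approximate differentiability (Rademacher gives the stronger former, while the cited theorem requires only the latter), and the fact that the two a.e.-sets must be intersected to obtain a single a.e.-statement of full rank, but both points are routine.
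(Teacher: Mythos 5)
Your proof is correct and matches the paper's intent: the paper presents the corollary as an immediate consequence of the preceding (Ponomarev) theorem, and your proof simply unpacks what makes it immediate — Rademacher's theorem supplies differentiability a.e., and nonvanishing Jacobian on a full-measure set gives $\rank\partial_{\vx}\vy=n$ a.e., after intersecting the two full-measure sets. (One minor slip: the cited theorem as stated in the paper asks for classical differentiability a.e., not merely approximate total differentiability, but Rademacher gives exactly that, so the argument goes through unchanged.)
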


\subsection{New Lemmata}\label{SS:NewLemma}

Throughout this section
\begin{itemize}
\item[$\bullet$] $I$ and $J$ are nonempty countable index sets;
\item[$\bullet$] $\dom,\Phi\subset\ren$ are nonempty, open and bounded sets;
\item[$\bullet$] $E\subseteq\ren$ is a measurable set;
\item[$\bullet$] $\{\dom_i\}_{i\in I}$ and $\{\Phi_j\}_{j\in J}$ are countable open partitions of $\dom$ and $\Phi$, respectively;
\item[$\bullet$] $\vz\in L(\ren\times\Phi;\ren)$ satisfies $\vz(\vx,\vzet)=\ve{0}$ for all $\vx\notin\dom$ and $\vzet\in\Phi$.
\item[$\bullet$] $\vy,\vy^k\in L(\ren\times\Phi;\ren)$ are defined by, for all $k\in\whls$,
\begin{equation}\label{D:yMap}
  \vy(\vx,\vzet)=\vx+\vz(\vx,\vzet)
  \text{ and }
  \vy^k(\vx,\vzet)
  :=\lc\begin{array}{ll}
    \vx, & k=0,\\
    \vy(\vy^{k-1}(\vx,\vzet),\vzet), & k>0.
    \end{array}\right.
\end{equation}
Since $\vz(\vx,\vzet)=\ve{0}$, for $\vx\notin\dom$, we find $\vy^k(\vx,\vzet)=\vy^{k'}(\vx,\vzet)$, for all $k\ge k'$, whenever $\vy^{k'}(\vx,\vzet)\notin\dom$.
\item[$\bullet$] For each $\vzet\in\Phi$, we define the section $\vy_\vzet:\ren\to\ren$ by $\vy_\vzet(\vx):=\vy(\vx,\vzet)$.
\end{itemize}

The first lemma in this section is based on a generalization of the argument described in Example~\ref{Ex:BasicEx2}. Given a measurable $\dom'$, we iteratively use the change of variables formula to compare $\|u\|_{L^p(\dom')}$ to the $L^p$-norm of $u$ over a set in the forward orbit of $\dom'$ associated to the parameterized dynamical system generated by $\vy$.
\begin{lemma}\label{L:OpenAbsorptionParamSet}
Let $\dom'\subseteq\dom$ and $\Phi'\subseteq\Phi$ be measurable sets. Define $Z':\dom\twoheadrightarrow\ren$ and the sets $\{Y'_k\}_{k\in\whls}$ by
\[
  Z'(\vx):=\vz(\{\vx\}\times\Phi')
  \nd
  Y'_k:=\vy^k(\dom'\times\Phi').
\]
Assume each of the following.
\begin{enumerate}[(i)]
\item\label{A:zLipschitz} $\vz$ is countably Lipschitz on $\{\dom_i\times\Phi'\}_{i\in I}$
\item\label{A:zInjLusin} $\vzet\mapsto\vz(\vx,\vzet)$ is a Lusin map and injective on $\Phi'$, for each $\vx\in\dom'$;
\item\label{A:yInjLusin} $\vx\mapsto\vy(\vx,\vzet)$ is a Lusin map on $\dom$ and countably injective on $\{\dom_i\}_{i\in I}$, for each $\vzet\in\Phi'$
\item\label{A:yIndicatrix} There exists $\{N'_k\}_{k\in\whls}\subset L^\infty(\ren\times\ren;\whls)$ such that
\[
  N_{\vy^k_{\vzet}}(\vx,\dom')\le N'_k(\vx,\vz(\vx,\vzet)),
  \quad\text{for a.e. }(\vx,\vzet)\in\ren\times\Phi';
\]
\item\label{A:yzDetBounds} There exists $\Lambda,\Theta<\infty$ such that for a.e. $(\vx,\vzet)\in\dom\times\Phi'$,
\[
  |\det\partial_{\vx}\vy(\vx,\vzet)|\Theta\ge 1,
  \nd
  |\det\partial_{\vzet}\vz(\vx,\vzet)|\Lambda\ge 1.
\]
\end{enumerate}
Then for each $k_0\in\nats$,
\begin{multline}\label{E:PoincareIneqBase}
  |\Phi'|\lint{\dom'}|u(\vx)|^p\dd\vx
  \le \Lambda k_0^{p-1}\lp\sum_{k=0}^{k_0-1}\Theta^k
  \lint{\dom\cap Y'_k}\lint{Z'(\vx)}
    N'_k(\vx,\vz)|u(\vx+\vz)-u(\vx)|^p\dd\vz\dd\vx\right.\\
  \left.+\Theta^{k_0}\lint{U\cap Y'_{k_0}}\lint{Z'(\vx)}
    N'_{k_0}(\vx,\vz)|u(\vx)|^p\dd\vz\dd\vx\rp,
\end{multline}
where $U\subseteq\ren$ is any measurable set satisfying $|Y'_{k_0}\bs U|=0$.
\end{lemma}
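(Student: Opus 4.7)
The plan is to generalize the argument of Example~\ref{Ex:BasicEx2}, replacing the single-point orbit argument by a two-stage change of variables: first in $\vx$ via the $k$-th iterate $\vy^k_{\vzet}$, and then in $\vzet$ via the map $\vzet\mapsto\vz(\vx,\vzet)$. The proof would proceed in four steps, paralleling the outline given after Figure~\ref{F:Setup}. \textbf{Step 1 (telescoping).} For $(\vx,\vzet)\in\dom'\times\Phi'$, write the telescoping identity
\[
  u(\vx)
  =u(\vy^{k_0}(\vx,\vzet))
  -\sum_{k=0}^{k_0-1}\bigl[u(\vy^{k+1}(\vx,\vzet))-u(\vy^k(\vx,\vzet))\bigr],
\]
then apply the convexity of $\tau\mapsto\tau^p$ to bound $|u(\vx)|^p$ pointwise by $k_0^{p-1}$ (up to constants absorbed as in Example~\ref{Ex:BasicEx2}) times the sum of $p$-th powers of the $k_0+1$ summands. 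Integrating against $\dd\vx\,\dd\vzet$ on $\dom'\times\Phi'$ produces the factor $|\Phi'|$ on the left and bounds the right by a $k_0^{p-1}$-weighted sum of integrals of $|u(\vy^{k+1})-u(\vy^k)|^p$ together with a boundary integral of $|u(\vy^{k_0})|^p$.

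\textbf{Step 2 (change of variables in $\vx$).} For each fixed $\vzet\in\Phi'$ and each $k$, I would push the $\vx$-integral forward under $\vx'=\vy^k(\vx,\vzet)=\vy^k_{\vzet}(\vx)$. Assumption~(\ref{A:yInjLusin}) guarantees $\vy_{\vzet}$ (hence by composition its iterates) is a Lusin map that is countably injective on $\{\dom_i\}_{i\in I}$; the chain rule together with assumption~(\ref{A:yzDetBounds}) yields $|\det\partial_{\vx}\vy^k_{\vzet}(\vx)|^{-1}\le\Theta^k$ wherever the iterate stays in $\dom$; and assumption~(\ref{A:yIndicatrix}) controls the Banach indicatrix $N_{\vy^k_{\vzet}}(\vx',\dom')\le N'_k(\vx',\vz(\vx',\vzet))$. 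Applying Theorem~\ref{T:ChangeVar} partition-wise on $\{\dom_i\}$ (and noting that points whose orbit exits $\dom$ are thereafter fixed, so their contribution to the $k$-th summand vanishes) transfers each integral onto $\dom\cap Y'_k$ at a cost of $\Theta^k N'_k(\vx',\vz(\vx',\vzet))$. The boundary term is handled identically, with the domain restricted to $U\cap Y'_{k_0}$ using the hypothesis $|Y'_{k_0}\setminus U|=0$.

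\textbf{Step 3 (change of variables in $\vzet$).} After step 2, the $\vzet$-integral on $\Phi'$ has the form $\int_{\Phi'}F(\vx',\vz(\vx',\vzet))\,\dd\vzet$ for each fixed $\vx'$. Assumption~(\ref{A:zInjLusin}) makes $\vzet\mapsto\vz(\vx',\vzet)$ a Lusin injection, assumption~(\ref{A:yzDetBounds}) gives $|\det\partial_{\vzet}\vz(\vx',\vzet)|^{-1}\le\Lambda$, and assumption~(\ref{A:zLipschitz}) provides the countable Lipschitz regularity needed to apply Theorem~\ref{T:ChangeVar} partition-wise on $\{\Phi_j\}_{j\in J}$ and to ensure measurability of all compositions via Lemmas~\ref{L:LusinImages} and~\ref{L:IndicatrixMeas}. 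The result is
\[
  \lint{\Phi'}F(\vx',\vz(\vx',\vzet))\,\dd\vzet
  \le
  \Lambda\lint{Z'(\vx')}F(\vx',\vz)\,\dd\vz,
\]
which, when applied to each term from step 2 and then combined via Fubini--Tonelli with the outer integral over $\dom\cap Y'_k$ (respectively $U\cap Y'_{k_0}$), produces the right-hand side of~\eqref{E:PoincareIneqBase}.

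\textbf{Step 4 (assembly).} Summing the contributions for $k=0,\ldots,k_0-1$, together with the boundary contribution, and collecting the factors $\Lambda$ (from step 3), $\Theta^k$ (from step 2), $k_0^{p-1}$ (from step 1), and $N'_k$ (from the indicatrix estimate), yields the stated inequality. The main obstacle I anticipate is \emph{Step 2}: I must verify that the Lusin, countable injectivity, Jacobian, and indicatrix bounds all behave correctly under the $k$-fold composition defining $\vy^k_{\vzet}$, despite $\vy$ being only countably Lipschitz across $\{\dom_i\}$ and the orbit of certain points becoming stationary upon leaving $\dom$. Keeping track of which null sets are genuinely negligible (both in $\vx$, through the essential absorption of $Y'_{k_0}$ by $U$, and in $\vzet$, through the countable partition $\{\Phi_j\}$) is the delicate bookkeeping that underlies the whole argument.
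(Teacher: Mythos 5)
Your overall plan (telescoping, change of variables in $\vx$, change of variables in $\vzet$, assembly) matches the outline the paper itself gives after Example~\ref{Ex:BasicEx2}, and Steps 1, 3, and 4 are correct. The gap is exactly where you fear it is, in Step~2, and it is not a bookkeeping subtlety but an actual error: you claim that assumption~(\ref{A:yInjLusin}) implies the iterate $\vy^k_{\vzet}$ is ``by composition'' countably injective on $\{\dom_i\}_{i\in I}$. This does not follow. Countable injectivity of $\vy_{\vzet}$ on each $\dom_i$ says nothing about injectivity of $\vy^k_{\vzet}$ on $\dom_i$; the image $\vy_{\vzet}(\dom_i)$ can straddle several $\dom_{i'}$ and then refold onto itself under the next application. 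So you cannot invoke Theorem~\ref{T:ChangeVar} with the $k$-fold iterate and the coarse partition $\{\dom_i\}$ as you propose.

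The paper avoids this by never changing variables with $\vy^k_{\vzet}$ all at once. Instead it performs a single-step change of variables with $\vy_{\vzet}$ (which genuinely is countably injective on $\{\dom_i\}$) and runs an induction on $\ell$, establishing~\eqref{E:UnparamPoincareIneq} with the intermediate form~\eqref{E:IkForm}. The crucial identity that makes the induction close is the Banach-indicatrix composition formula
\[
  \sum_{i\in I}\wh{N}'_{\ell,i}(\vy')\cdot\chi_{\vy_{\vzet}(X_i\cap\vy_{\vzet}^\ell(\dom'))}(\vy')
  =N_{\vy_{\vzet}^{\ell+1}}(\vy',\dom'),
\]
which assembles $N_{\vy^{\ell+1}_{\vzet}}$ from the single-step data without ever needing a partition adapted to the $k$-fold composition. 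If you insist on a one-shot change of variables with $\vy^k_{\vzet}$, you would have to construct the refined multi-index partition $\{\dom_{\ve{i}}(\vzet)\}_{\ve{i}\in I^k}$ on which $\vy^k_{\vzet}$ is countably injective (this is exactly what Lemma~\ref{L:Phi'Measurable} builds, for a different purpose); neither that construction nor the iterative route appears in your argument, so as written Step~2 does not go through.
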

\begin{remark}
\begin{enumerate}[(a)]
\item By definition, for each $\vzet\in\Phi$, the map $\vy^0(\cdot,\vzet)$ is the identity. Thus, we may take $N'_0\equiv 1$.
\item For each $\vzet\in\Phi$ and $i\in I$, the Lipschitz assumption implies $\vy_{\vzet}$ is a Lusin map on $\dom_i$. Kirszbraun's theorem (see~\cite{Fed:69a}) yields a Lipschitz, and hence Lusin, extension of $\vz_{\vzet}$ and $\vy_{\vzet}$ to $\ov{\dom}_i$. Nevertheless, there might be $\vzet\in\Phi$ such that there is no extension $\ov{\vz}_\vzet:\dom\to\ren$ that is Lusin on $\dom$ and equal to $\vz$ on $\bigcup_{i\in I}\dom_i$. (Consider, for example, the Cantor function.) Therefore, the Lusin condition in assumption~(\ref{A:yInjLusin}) is necessary. Moreover, since the sum of two Lusin maps need not be a Lusin map~\cite{Pok:08a}, the Lusin assumption must be imposed on $\vy_{\vzet}$, rather than $\vz_{\vzet}$.
\end{enumerate}
\end{remark}
\begin{proof}
We may assume that the integrals in the upper bound appearing in~\eqref{E:PoincareIneqBase} are finite. Note that $\vy$ and $\vy_{\vzet}$ are both measurable and thus, $u\circ\vy^k$ and $u\circ\vy_{\vzet}^k$, are both measurable for any $k\in\whls$ and any $\vzet\in\Phi$. Moreover, the countably Lipschitz property and Radamacher's theorem imples $\vy$ is approximately totally differentiable a.e. in $\ren$. Put $\dom_0:=\dom\bs\bigcup_{i\in I}\dom_i$, so $|\dom_0|=0$. This and assumption~(\ref{A:yzDetBounds}) imply, for each $\vzet\in\Phi$, that $|\vy_{\vzet}^{-1}(\dom_0)|=0$. By assumption~(\ref{A:zInjLusin}) and Lemma~\ref{L:LusinImages}, the correspondence $Z'$ is measurable-valued. For each $i\in I$, we introduce $X_i:\Phi'\twoheadrightarrow\dom$ defined by $\dom_i\cap\vy^{-1}_{\vzet}(\dom\bs\dom_0)$, which is also measurable-valued.

Fix $\vzet\in\Phi'$. We will use an induction argument to establish the following intermediate inequality (Step~\ref{xChangeOfVar} in the discussion after Example~\ref{Ex:BasicEx2}):
\begin{align}\label{E:UnparamPoincareIneq}
\nonumber
  \lint{\dom'}|u(\vx)|^p\dd\vx
  &\le
  k_0^{p-1}\lp\sum_{k=0}^{k_0-1}\Theta^k\lint{\dom\cap Y'_k}
    N_{\vy^k_{\vzet}}(\vx,\dom')|u(\vx+\vz(\vx,\vzet))-u(\vx)|^p\dd\vx\right.\\
  &\qqqquad\qqqquad+
  \left.\Theta^{k_0}\lint{Y'_{k_0}}N_{\vy_{\vzet}^{k_0}}(\vx,\dom')
    |u(\vx)|^p\dd\vx\rp.
\end{align}
For each $\vx\in\dom'$,
\[
  |u(\vx)|^p
  \le
  k_0^{p-1}\lp\sum_{k=0}^{k_0-1}
    |u(\vy^{k+1}_{\vzet}(\vx))-u(\vy^k_{\vzet}(\vx))^p
      +|u(\vy^{k_0}_{\vzet}(\vx))|^p\rp.
\]
Since $\vx\in\dom'$ is arbitrary, we may integrate over $\dom'$ and obtain
\begin{align}
\nonumber
  \lint{\dom'}|u(\vx)|^p\dd\vx
  &\le
  k_0^{p-1}\lp\sum_{k=0}^{k_0-1}
    \underbrace{\lint{\dom'}|u(\vy^{k+1}_{\vzet}(\vx))
      -u(\vy^k_{\vzet}(\vx))|^p\dd\vx}
      _{:=I_k}
    +\lint{\dom'}|u(\vy^{k_0}_{\vzet}(\vx))|^p\dd\vx\rp\\
\nonumber
  &=
  k_0^{p-1}\sum_{k=0}^0
  \underbrace{\lint{\dom'}|u(\vx+\vz(\vx,\vzet))-u(\vx)|^p\dd\vx}_{I_0}\\
\label{E:SumIneq1}
  &\quad+
  k_0^{p-1}\lp\sum_{k=1}^{k_0-1}
    \lint{\dom'}|u(\vy^{k+1}_{\vzet}(\vx))-u(\vy^k_{\vzet}(\vx))|^p\dd\vx
    +\lint{\dom'}|u(\vy^{k_0}_{\vzet}(\vx))|^p\dd\vx\rp
\end{align}
Let $0\le\ell< k_0-1$ be given. For the purposes of induction, assume that
\[
  I_k=\ns\lint{\dom\cap\vy^k_{\vzet}(\dom')}\nsss
  N_{\vy^k_{\vzet}}(\vx,\dom')
    |u(\vx+\vz(\vx,\vzet))-u(\vx)|^p\dd\vx,
  \frl 0\le k\le\ell
\]
and
\begin{equation}\label{E:IkForm}
  I_k=\ns\lint{\dom\cap\vy_{\vzet}^\ell(\dom')}\nsss
  N_{\vy^\ell_{\vzet}}(\vx,\dom')
    |u(\vy_{\vzet}^{k+1-\ell}(\vx))-u(\vy^{k-\ell}_{\vzet}(\vx))|^p\dd\vx,
  \frl \ell<k\le k_0-1.
\end{equation}
Our base case can be seen in~\eqref{E:SumIneq1}. For brevity, we will write $X_i$ for $X_i(\vzet)$. Since $\{\dom_i\}_{i\in I}$ is an open partition of $\dom$, if $\vx\notin\bigcup_{i\in I}X_i$, then either $\vy_\vzet(\vx)\notin\dom$ or $\vx\in\dom_0\cup\vy^{-1}(\dom_0)$. In the first case, we find $\vy^{k'}_{\vzet}(\vx)=\vy_{\vzet}(\vx)\notin\dom$, for all $k'\in\nats$. Consequently
\[
  |u(\vy_{\vzet}^{k+1-\ell}(\vx))-u(\vy^{k-\ell}_{\vzet}(\vx))|^p=0,
  \frl k>\ell.
\]
Since $\{X_i\}_{i\in I}$ are mutually disjoint and $|\dom_0\cup\vy^{-1}(\dom_0)|=0$, we deduce from~\eqref{E:IkForm} that
\begin{align}
\nonumber
  I_k
  &=
  \lint{\bigcup_{i\in I}X_i\cap\vy_{\vzet}^\ell(\dom')}\nsss
  N_{\vy^\ell_{\vzet}}(\vx,\dom')
    |u(\vy_{\vzet}^{k+1-\ell}(\vx))-u(\vy^{k-\ell}_{\vzet}(\vx))|^p\dd\vx\\
\label{E:IkiDef}
  &=
  \sum_{i\in I}
    \underbrace{\lint{X_i\cap\vy_{\vzet}^\ell(\dom')}\nsss
    N_{\vy^\ell_{\vzet}}(\vx,\dom')
    |u(\vy^{k+1-\ell}_{\vzet}(\vx))-u(\vy_{\vzet}^{k-\ell}(\vx))|^p\dd\vx}
      _{:=I_{k,i}},
\end{align}
for all $\ell<k\le k_0-1$. We want to make the change of variables $\vy_{\vzet}(\vx)\mapsto\vy'$. Assumption~(\ref{A:yInjLusin}), allows us to define a measurable inverse map $\vx_i:\dom\cap\vy_{\vzet}(\dom_i)\to\dom_i$ such that
\begin{equation}\label{E:xiInverse}
  \vy_{\vzet}(\vx_i(\vy_{\vzet}(\vx)))=\vx,
  \frl\vx\in\dom_i.
\end{equation}
Define $\wh{N}'_{\ell,i}:\dom\cap\vy_{\vzet}(\dom_i)\to\whls$ by
\begin{equation}\label{E:hatN'}
  \wh{N}'_{\ell,i}(\vy')
  :=
  N_{\vy^\ell_{\vzet}}(\vx_i(\vy'),\dom').
\end{equation}
We note that $\|\wh{N}'_{\ell,i}\|_{L^\infty}\le\|N'_\ell\|_{L^\infty}$, by assumption~(\ref{A:yIndicatrix}). The inverse property in~\eqref{E:xiInverse} implies
\[
  N_{\vy^\ell_{\vzet}}(\vx,\dom')
  =
  \wh{N}'_{\ell,i}(\vy_{\vzet}(\vx))
  \frl\vx\in\dom_i.
\]
Lemma~\ref{L:IndicatrixMeas} yields the measurability of $\vx\mapsto N_{\vy^\ell_{\vzet}}(\vx,\dom')$. Since $\vx_i$ is also measurable, we conclude that $N'_{\ell,i}$ is measurable. We may therefore make the desired change of variables in~\eqref{E:IkiDef} and obtain
\[
  I_{k,i}
  \le
  \Theta\nss\lint{\vy_{\vzet}(X_i\cap\vy_{\vzet}^\ell(\dom'))}\nsss
    \wh{N}'_{\ell,i}(\vy')|u(\vy_{\vzet}^{k-\ell}(\vy')
    -u(\vy^{k-\ell-1}_{\vzet}(\vy')))|^p \dd\vy'.
\]
Let $\vy'\in\bigcup_{i\in I}\vy_{\vzet}(X_i\cap\vy_{\vzet}^\ell(\dom'))$ be given. Recalling~\eqref{E:hatN'} and that $\vy_{\vzet}$ is injective on each $X_i\subseteq\dom_i$, we may write
\begin{align*}
  \sum_{i\in I}\wh{N}'_{\ell,i}(\vy')
  \cdot\chi_{\vy_{\vzet}(X_i\cap\vy_{\vzet}^\ell(\dom'))}(\vy')
  &=
  \sum_{i\in I}N_{\vy^\ell_{\vzet}}(\vx_i(\vy'),\dom')\cdot
    \chi_{X_i\cap\vy_{\vzet}^\ell(\dom')}(\vx_i(\vy'))\\
  &=
  \sum_{\vx\in\vy_{\vzet}^{-1}(\vy')\bs\dom_0}
    N_{\vy^\ell_{\vzet}}(\vx_i(\vy'),\dom')\\
  &=
  N_{\vy_{\vzet}^{\ell+1}}(\vy',\dom').
\end{align*}
We note that each of the sums above are infinite if and only if $\card(\vy_{\vzet}^{-1}(\vy')\bs\dom_0)=\infty$. Returning to~\eqref{E:IkiDef}, we may use the monotone convergence theorem to produce
\begin{align*}
  I_k
  &=
  \sum_{i\in I}I_{k,i}
  \le
  \Theta\sum_{i\in I}
    \ns\lint{\vy_{\vzet}(X_i\cap\vy_{\vzet}^\ell(\dom'))}\nsss
    \wh{N}'_{\ell,i}(\vy')|u(\vy_{\vzet}^{k-\ell}(\vy')
    -u(\vy^{k-\ell-1}_{\vzet}(\vy')))|^p \dd\vy'\\
  &=\Theta\nsss
  \lint{\vy_{\vzet}(\vy^{-1}(\dom)\cap\vy_{\vzet}^\ell(\dom'))}\ns
    \lp\sum_{i\in I}\wh{N}'_{\ell,i}(\vy')
      \cdot\chi_{\vy_{\vzet}(X_i\cap\vy_{\vzet}^\ell(\dom'))}(\vy')\rp\\
  &\qqqquad\qqquad
    \times|u(\vy_{\vzet}^{k-\ell}(\vy')
    -u(\vy^{k-\ell-1}_{\vzet}(\vy')))|^p \dd\vy'\\
  &\le
  \Theta\lint{\dom\cap\vy_{\vzet}^{\ell+1}(\dom')}\nss
    N_{\vy_{\vzet}^{\ell+1}}(\vy',\dom')
    |u(\vy_{\vzet}^{k-\ell}(\vy')
      -u(\vy^{k-\ell-1}_{\vzet}(\vy')))|^p \dd\vy'
\end{align*}
In particular,
\[
  I_{\ell+1}
  \le
  \Theta\lint{\dom\cap\vy_{\vzet}^{\ell+1}(\dom')}\nss
    N_{\vy_{\vzet}^{\ell+1}}(\vy',\dom')
    |u(\vy'+\vz(\vy',\vzet))-u(\vy')|^p\dd\vy'.
\]
This concludes the inductive step. A similar argument provides the corresponding bound for the last term in~\eqref{E:SumIneq1}. Using the fact that $Y'_k\supseteq\vy^k_{\vzet}(\dom')$, we may expand the domains of integration to obtain~\eqref{E:UnparamPoincareIneq}.

For the last part (Step~\ref{zChangeOfVar} in the Introduction) of the proof for~\eqref{E:PoincareIneqBase}, we integrate the result with respect to $\vzet\in\Phi'$. After using the Fubini-Tonelli theorem, we make another change of variables $\vz(\vx,\vzet)\to\vz'$. Since the lower bound of~\eqref{E:UnparamPoincareIneq} is independent of $\vzet$, we may use assumption~(\ref{A:yIndicatrix}) and write
\begin{align*}
  |\Phi'|\lint{\dom'}|u(\vx)|^p\dd\vx
  &\le
  k_0^{p-1}\lp\sum_{k=0}^{k_0-1}\Theta^k
  \lint{\dom\cap Y'_k}\lint{\Phi'}
    N'_k(\vx,\vz(\vx,\vzet))|u(\vx+\vz(\vx,\vzet))-u(\vx)|^p\dd\vzet\dd\vx\right.\\
  &\qqqquad+
  \left.\Theta^{k_0}\lint{Y'_{k_0}}\lint{\Phi'}\
    N'_{k_0}(\vx,\vz(\vx,\vzet))|u(\vx)|^p\dd\vzet\dd\vx\rp\\
  &\le
  \Lambda k_0^{p-1}\lp\sum_{k=0}^{k_0-1}\Theta^k
  \lint{\dom\cap Y'_k}\lint{Z'(\vx)}
    N'_k(\vx,\vz')|u(\vx+\vz')-u(\vx)|^p\dd\vz'\dd\vx\right.\\
  &\qqqquad+
  \left.\Theta^{k_0}\lint{U\cap Y'_{k_0}}\lint{Z'(\vx)}
    N'_{k_0}(\vx,\vz')|u(\vx)|^p\dd\vz'\dd\vx\rp.
\end{align*}
In the inner integral of the rightmost term, we used the assumption that $|Y'_{k_0}\bs U|=0$.
\end{proof}

When proving the Main Poincar\'{e} Inequality II, we will decompose the parameter space $\Phi$ into subsets corresponding to distinct absorption indices. The next lemma ensures these subsets are measurable, so that the previous lemma is applicable.
\begin{lemma}\label{L:Phi'Measurable}
Let a measurable $\dom'\subseteq\dom$ and an open $U\subseteq\ren\bs\dom'$ be given. Suppose that $U$ essentially absorbs $\dom'$ in $\{\bbD_{\vzet}\}_{\vzet\in\Phi}$. For each $\alpha\in\nats$ and $k\in\whls$, set
\[
  \Phi'_\alpha:=\{\vzet\in\Phi:\text{the absorption index for $\dom'$ in $\bbD_{\vzet}$ is $\alpha$}\},
\]
Assume the following.
\begin{enumerate}[(i)]
\item\label{A:zxZetaLipschitz} $\vz$ is countably Lipschitz on $\{\dom_i\times\Phi_j\}_{i\in I,j\in J}$.
\item\label{A:yxInjLusin} $\vx\mapsto\vy(\vx,\vzet)$ is a Lusin map on $\dom$ and countably injective on $\{\dom_i\}_{i\in I}$, for each $\vzet\in\Phi$
\item\label{A:detBounds} $\det\partial_{\vx}\vy(\vx,\vzet)\neq0$, for a.e. $(\vx,\vzet)\in\dom\times\Phi$.
\end{enumerate}
Then, for each $\alpha\in\nats$, the set $\Phi'_\alpha\in\class{B}(\ren)$.
\end{lemma}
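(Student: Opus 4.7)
The plan is to express $\Phi'_\alpha$ as a Boolean combination of countably many sets of the form
\[
  B_k := \{\vzet \in \Phi : \vy^k(\dom', \vzet) \subset\subset U \text{ a.e.}\}, \qquad k \in \whls,
\]
each of which I will show to be measurable. By the definition of the essential absorption index, for $\alpha \ge 1$ we have $\Phi'_\alpha = \bigl(\bigcap_{k \ge \alpha} B_k\bigr) \bs B_{\alpha-1}$, while $\Phi'_0 = \bigcap_{k \ge 0} B_k$, so it suffices to prove measurability of each $B_k$.

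To localize the condition ``$\subset\subset U$ a.e.'', fix an exhaustion of $U$ by bounded open sets $V_n := \{\vx \in U : d_{\ren \bs U}(\vx) > 1/n\} \cap \bll_n(\ve{0})$, $n \in \nats$, so that $\ov{V_n}$ is compact, $\ov{V_n} \subset V_{n+1} \subset U$, and $\bigcup_n V_n = U$. By the definition of essential compact containment, $\vy^k(\dom', \vzet) \subset\subset U$ a.e. is equivalent to the existence of some $n$ with $|\vy^k(\dom', \vzet) \bs \ov{V_n}| = 0$. Hence $B_k = \bigcup_{n \in \nats} B_{k,n}$ with $B_{k,n} := \{\vzet \in \Phi : |\vy^k(\dom', \vzet) \bs \ov{V_n}| = 0\}$, and the task reduces to measurability of each $B_{k,n}$.

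For this, I would introduce
\[
  h_{k,n}(\vzet) := \lint{\dom'} \chi_{\ren \bs \ov{V_n}}(\vy^k(\vx, \vzet)) \,|\det \partial_{\vx} \vy^k(\vx, \vzet)| \dd\vx.
\]
On each piece $\dom_i \times \Phi_j$, assumption~(\ref{A:zxZetaLipschitz}) makes $\vz$, and therefore $\vy$, jointly Lipschitz; iterating and refining the partition yields piecewise joint Lipschitz continuity of $\vy^k$, and Rademacher's theorem delivers joint a.e. differentiability together with joint measurability of $\partial_{\vx}\vy^k$, which patches across the null complement of $\bigcup_{i,j}(\dom_i\times\Phi_j)$. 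The integrand above is then jointly measurable, and Fubini-Tonelli makes $h_{k,n}$ measurable on $\Phi$. Assumptions~(\ref{A:yxInjLusin}) and~(\ref{A:detBounds}), combined with the fact that compositions of Lusin maps are Lusin and that Corollary~\ref{C:LipPreImage} propagates the a.e.\ differentiability through iterates, place Theorem~\ref{T:ChangeVar} in force for each section $\vy^k_{\vzet}$, giving
\[
  h_{k,n}(\vzet) = \lint{\vy^k_{\vzet}(\dom') \bs \ov{V_n}} N_{\vy^k_{\vzet}}(\vw, \dom') \dd\vw.
\]
Since $N_{\vy^k_{\vzet}}(\cdot, \dom') \ge 1$ on $\vy^k_{\vzet}(\dom')$, the integral on the right vanishes iff $|\vy^k_{\vzet}(\dom') \bs \ov{V_n}| = 0$. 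Thus $B_{k,n} = h_{k,n}^{-1}(\{0\})$ is measurable, and so are $B_k$ and $\Phi'_\alpha$.

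The main obstacle I expect is making the joint measurability of $|\det \partial_{\vx} \vy^k|$ fully rigorous, because the iteration $\vy^{k+1}(\vx,\vzet) = \vy(\vy^k(\vx,\vzet), \vzet)$ combined with the chain rule requires evaluating $\partial_{\vx} \vy$ at the intermediate point $\vy^k(\vx,\vzet)$, and composition of Lebesgue-measurable maps is not automatically measurable. The clean workaround is to exploit the piecewise structure: on each $\dom_i \times \Phi_j$ the map $\vy$ is Lipschitz and hence continuous, so on a successive refinement of the partition the iterate $\vy^k$ is likewise continuous and Lipschitz with Borel representatives of $\partial_{\vx}\vy$, after which measurability propagates through the chain rule on each refined piece.
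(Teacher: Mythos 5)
Your approach is genuinely different from the paper's. The paper builds, by induction on $k$, a refinement $\{\Upsilon_{\ve{i}}\}_{\ve{i}\in I^k}$ of the open partition on which each iterate $\vy^{k+1}_\vzet$ is Lipschitz and injective (using the domain invariance theorem to keep these sets open), then shows directly that $\Phi_j\bs\Phi'_{\ve{i},\ell}$ is open via a compactness/$\dist$ argument, and assembles $\Phi'_\alpha$ as a countable union of countable intersections of relatively closed sets. This is a purely topological argument, and it delivers what the statement claims: $\Phi'_\alpha\in\class{B}(\ren)$. Your route is measure-theoretic — express $B_{k,n}$ as the zero set of an integral and invoke Theorem~\ref{T:ChangeVar}. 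Conceptually it is cleaner, but it cannot reach the stated conclusion.

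The concrete gap is that your argument yields only Lebesgue measurability of $\Phi'_\alpha$, not Borel measurability, which is what the lemma asserts ($\class{B}(\ren)$ is the Borel $\sigma$-algebra by the paper's notational conventions). Two separate mechanisms force the downgrade. First, assumption~(\ref{A:detBounds}) holds only for a.e.\ $(\vx,\vzet)$, so by Fubini it fails on some null set of $\vzet$-sections; for such $\vzet$ you cannot invoke Corollary~\ref{C:LipPreImage} to propagate approximate total differentiability through the iterates $\vy^k_\vzet$, and Theorem~\ref{T:ChangeVar} is not available. Consequently the identity $B_{k,n}=h_{k,n}^{-1}(\{0\})$ that you write as an equality holds only up to a $\vzet$-null set. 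Second, $h_{k,n}$ itself is defined by integrating $|\det\partial_\vx\vy^k|$, which is an a.e.-defined partial derivative of a piecewise Lipschitz map; Rademacher and Fubini give Lebesgue joint measurability of the integrand but not Borel, and in any event the outer integral of an a.e.-defined function is at best a Lebesgue-measurable function of $\vzet$. Together these give $B_{k,n}\in\class{L}(\ren)$ and hence $\Phi'_\alpha\in\class{L}(\ren)$, which is strictly weaker than the claimed $\Phi'_\alpha\in\class{B}(\ren)$.

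Two further points are sketched rather than proved and would need to be filled in even to get the Lebesgue conclusion: (a) the joint measurability of $|\det\partial_\vx\vy^k|$ does require the explicit open refinement (the preimages $\vy^{-1}(\dom_{i'}\times\Phi_j)\cap(\dom_i\times\Phi_j)$ are open because $\vy$ is continuous on each piece and $\dom_{i'}$ is open), which is essentially the same construction the paper carries out as the sets $\Upsilon_{\ve{i}}$ — you gestured at this but left it as ``the main obstacle I expect''; (b) once an orbit exits $\dom$, the map $\vy_\vzet$ becomes the identity, and the transition set between ``still inside'' and ``already absorbed'' must be excluded from the refined pieces before differentiability of $\vy^k_\vzet$ can be asserted. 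To be fair, for the way Lemma~\ref{L:Phi'Measurable} is actually used in the proof of Theorem~\ref{T:MainPoinc2} (decomposing $\Phi$ into domains of integration), Lebesgue measurability would suffice, so a carefully fleshed-out version of your argument would support the rest of the paper even though it would not prove the lemma as stated.
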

\begin{proof}
We introduce some families of sets and correspondences. These include partitions of $\dom$ on which we show that iterated compositions of $\vy$ are countably Lipschitz continuous. Ultimately, this will allow us to argue that $\Psi'_\alpha$ can be identified as a member of the Borel hierarchy on $\ren$.

Define the Borel-null set $\dom_0:=\dom\bs\bigcup_{i\in I}\dom_i$. Fix $i_0\in I$ and $j_0\in J$, and set $\Upsilon:=\dom_{i_0}\times\Phi_{j_0}$. For each $k\in\nats$, $\ve{i}=\{(i_1,\dots,i_k)\}\in I^k$, define $Y_{\ve{i}}\subseteq\dom$, $\Upsilon_{\ve{i}}\subseteq\dom_{i_0}\times\Phi_{j_0}$ and $\dom_{\ve{i}},\dom_{0,k}:\Phi_{j_0}\twoheadrightarrow\dom$ as follows:
\begin{align}
\nonumber
  & Y_{(i_1,\dots,i_k)}
  :=\left\{\begin{array}{ll}
    \vy(\Upsilon)\cap\dom_{i_1}, & k=1,\\
    \vy^k(\Upsilon_{(i_1,\dots,i_{k-1})})\cap\dom_{i_k}, & k>1,
  \end{array}\right.\\
\label{E:UpsilonDef}
  &\Upsilon_{\ve{i}}
  :=
  \vy^{-k}(Y_{\ve{i}})\cap\Upsilon,
  \qqquad\dom_{\ve{i}}(\vzet):=\lc\vx\in\dom_{i_0}:(\vx,\vzet)
    \in\Upsilon_{\ve{i}}\rc.
\end{align}
If $\vx\in\dom_{\ve{i}}(\vzet)$, then
\[
  \vx\in\dom_{i_0},\:\vy(\vx,\vzet)\in\dom_{i_1},\:\vy^2(\vx,\vzet)\in\dom_{i_2},
  \dots,\:\vy^k(\vx,\vzet)\in\dom_{i_k}.
\]
The collection $\{\dom_{\ve{i}}(\vzet)\}_{\ve{i}\in I^k}$ identifies those points in $\dom_{i_0}$ with an orbit, in $\bbD_{\vzet}$, that remains in the partition $\{\dom_i\}_{i\in I}$ for at least the first $k$ steps. The multi-index $\ve{i}\in I^k$ captures the sequence of subdomains $\{\dom_{i_\ell}\}_{\ell=1}^k$ that the orbit of $\vx$ passes through in those first $k$ steps. Thus, for $k\ge 1$, we see that $\bigcup_{\ve{i}\in I^k}\dom_{\ve{i}}(\vzet)\subseteq\bigcup_{\ve{i}\in I^{k+1}}\dom_{\ve{i}}(\vzet)$. The set $\dom_{\ve{i}}(\vzet)$ can be empty, if there exists $1\le k'\le k$ for which $\vy^{k'}_{\vzet}(\dom_{i_0})\subseteq\dom_{0}\cup U$. We will show that the composition $\vy_{\vzet}^{k+1}$ is continuous on $\dom_{\ve{i}}$, for each $\ve{i}\in I^k$.

To this end there are some additional properties of the sets defined in~\eqref{E:UpsilonDef} that we need. We will show by induction that, for each $k\in\nats$,
\begin{itemize}
\item[$\bullet$] $Y_{\ve{i}}$ is an open set, for each $\ve{i}\in I^k$;
\item[$\bullet$] $\{\Upsilon_{\ve{i}}\}_{\ve{i}\in I^k}$ and, for each $\vzet\in\Phi_j$, $\{\dom_{\ve{i}}(\vzet)\}_{\ve{i}\in I^k}$ are all collections of open sets;
\item[$\bullet$] $\vy^{k+1}$ is countably Lipschitz continuous on $\{\Upsilon_{\ve{i}}\}_{\ve{i}\in I^k}$;
\item[$\bullet$] $\vy^{k+1}_{\vzet}$ is countably injective on $\{\dom_{\ve{i}}(\vzet)\}_{\ve{i}\in I^k}$, for each $\vzet\in\Phi_{j_0}$.
\end{itemize}
First, we establish the base case. By hypothesis $\Upsilon$ is an open set and $\vy$ is continuous on $\Upsilon$. From their definitions, for each $i_1\in I$,
\begin{equation}\label{E:BaseCaseSets}
  Y_{(i_1)}
  =
  \dom_{i_1}\cap\bigcup_{\vzet\in\Phi_{j_0}}
    \vy_\vzet(\dom_{i_0})
  \nd
  \dom_{(i_1)}(\vzet)
  =\vy^{-1}_{\vzet}(\dom_{i_1})\cap\dom_{i_0}.
\end{equation}
By assumptions~(\ref{A:zxZetaLipschitz}) and~(\ref{A:yxInjLusin}), for each $\vzet\in\Phi_{j_0}$, the map $\vy_{\vzet}$ is Lipschitz and injective on $\dom_{i_1}$. It follows, by the domain invariance theorem, that $\vy_\vzet(\dom_{i_0})$ is open, and hence, so is $Y_{(i_1)}\subseteq\dom_{i_1}$. Invoking the Lipschitz continuity of $\vy$ on $\dom_{i_1}$, again, we conclude that $\Upsilon_{(i_1)}$ is the intersection of two open sets, as well. Since, for each $i_1\in I$, the map $\vy$ is Lipschitz continuous on both $\Upsilon_{(i_1)}\subseteq\Upsilon$ and $\vy(\Upsilon_{(i_1)})\times\Phi_{j_0}\subseteq\dom_{i_1}\times\Phi_{j_0}$, we see that $\vy^2$ is countably Lipschitz on $\{\Upsilon_{(i_1)}\}_{i_1\in I}$. The injectivity property for $\vy_{\vzet}^2$ follows from the injectivity of $\vy_\vzet$ on $\dom_{i_0}$ and on $\dom_{(i_1)}(\vzet)\subseteq\dom_{i_1}$. For the inductive argument, let $k\in\nats$ be given, and for each $1\le k'\le k$, assume that each of the properties listed above hold. For each $\ve{i}=(i_1,\dots,i_k)\in I^k$ and $i_{k+1}\in I$, put $(\ve{i},i_{k+1}):=(i_1,\dots,i_k,i_{k+1})$. We find the following analogues of~\eqref{E:BaseCaseSets}:
\begin{align*}
  Y_{(\ve{i},i_{k+1})}
  &=\dom_{i_{k+1}}
      \cap\bigcup_{\vzet\in\Phi_{j_0}}
        \vy^{k+1}_{\vzet}(\dom_{\ve{i}}(\vzet))
\intertext{and}
  \dom_{(\ve{i},i_{k+1})}(\vzet)
   &=\vy^{-k-1}_{\vzet}(\dom_{i_{k+1}}
      \cap\vy^{k+1}_{\vzet}(\dom_{\ve{i}}(\vzet)))\\
   &=\vy^{-k}_{\vzet}(\vy^{-1}_{\vzet}(\dom_{i_{k+1}})
      \cap\vy^k_{\vzet}(\dom_{\ve{i}}(\vzet)))
\end{align*}
As in the base case, assumption~(\ref{A:yxInjLusin}), the domain invariance theorem together with the inductive assumption that $\vy^{k+1}_\vzet$ is injective and Lipschitz continuous on the open set $\dom_{\ve{i}}(\vzet)$ implies $\dom_{(\ve{i},i_{k+1})}(\vzet)$, and thus $Y_{(\ve{i},i_{k+1}})=\dom_{i_{k+1}}\cap\vy^{k+1}(\Upsilon_{\ve{i}})$, is open. Using the inductive assumption that $\vy^{k+1}$ is Lipschitz on $\Upsilon_{\ve{i}}$, we deduce that $\Upsilon_{(\ve{i},i_{k+1})}$ is open. The Lipschitz continuity of $\vy^{k+2}$ and injectivity of $\vy^{k+2}_{\vzet}$ follows from the corresponding properties assumed for $\vy^{k+1}$, $\vy$, $\vy^{k+1}_{\vzet}$, and $\vy_{\vzet}$. This concludes the inductive argument. As $i_0\in I$ and $j_0\in J$ were arbitrary, this construction of open sets can be done for each member of $\{\dom_i\times\Phi_j\}_{i\in I,j\in J}$.

Fix $\alpha\in\nats$, $i\in I$, and $j\in J$. Set $\dom'_i:=\dom'\cap\dom_i$. A straightforward induction argument and assumption~(\ref{A:yxInjLusin}) verifies that $\vy_{\vzet}^\alpha$ is a Lusin map on $\dom$, for each $\vzet\in\Phi_j$. Let $\{\Upsilon_{\ve{i}}\}_{\ve{i}\in I^\alpha}$, and $\{\dom_{\ve{i}}\}_{\ve{i}\in I^\alpha}$ denote the sets and correspondences constructed above, with $\Upsilon=\dom_i\times\Phi_j$. For each $\ve{i}\in I^\alpha$, we use $\wh{\vy}_{\ve{i}}:\Upsilon_{\ve{i}}\to\ren$ to denote the restriction of $\vy$ to $\Upsilon_{\ve{i}}$. We will prove that
\[
  \Phi'_{\alpha,i,j}:=\{\vzet\in\Phi_j:
    \vy^\alpha_{\vzet}(\dom'_i)\subset\subset U\text{ a.e.}\}
\]
can be identified with the countable union of countable intersections of Borel sets. To this end, we make the observation that given an open set $U'\subseteq\ren$, a set $\dom''\subseteq\dom_i$, and $\vzet\in\Phi_j$,
\begin{align}\label{E:RestrictionEquiv}
  &\qquad\vy^\alpha_{\vzet}(\dom'')\subset\subset U'\text{ a.e.}\\
\nonumber
  \Longleftrightarrow
  &\qquad\text{\parbox[t]{3in}{there exists an open set $U''\subset\subset U'$ such that $\wh{\vy}_{\ve{i}}^\alpha(\dom''\cap\dom_{\ve{i}}(\vzet)\times\{\vzet\})\subset\subset U''$ a.e., for all $\ve{i}\in I^\alpha.$}}
\end{align}

For each $\ell\in\nats$, define
\[
  U_\ell:=\lc\vx\in U: d_U(\vx)>\smfrac{1}{\ell}\rc,
\]
so
\[
  U_\ell\subset\subset U_{\ell+1}\subset\subset U
  \nd
  U=\bigcup_{\ell=1}^\infty U_\ell.
\]
For each $\ve{i}\in I^\alpha$ and $\ell\in\nats$, define $\dom'_{\ve{i}}:\Phi_j\twoheadrightarrow\dom_i$ by $\dom'_{\ve{i}}(\vzet):=\dom'\cap\dom_{\ve{i}}(\vzet)$ and
\[
  \Phi'_{\ve{i},\ell}:=\lc\vzet\in\Phi_j:
    \wh{\vy}_{\ve{i}}^\alpha(\dom'_{\ve{i}}(\vzet)\times\{\vzet\})
    \subset\subset U_\ell\text{ a.e.}\rc
\]
We claim that $\Phi'_{\alpha,i,j}=\bigcup_{\ell=1}^\infty\bigcap_{\ve{i}\in I^\alpha}^\infty\Phi'_{\ve{i},\ell}$. Indeed, if $\vzet_0\in\Phi'_{\alpha,i,j}$, then there exists an $\ell\in\nats$ such that $\vy^\alpha_{\vzet_0}(\dom'_i)\subset\subset U_{\ell}$ a.e.. It follows from~\eqref{E:RestrictionEquiv} that $\vzet_0\in\Phi'_{\ve{i},\ell}$, for all $\ve{i}\in I^\alpha$. Hence $\Phi'_{\alpha,i,j}\subseteq\bigcup_{\ell=1}^\infty\bigcap_{\ve{i}\in I^\alpha}^\infty\Phi'_{\ve{i},\ell}$. For the opposing inclusion, suppose that $\vzet_0$ is a member of the countable union of countable intersections. Then there exists $\ell\in\nats$ such that $\wh{\vy}_{\ve{i}}^\alpha(\dom_{\ve{i}}'(\vzet_0)\times\{\vzet_0\})\bs\ov{U}_{\ell}$ has zero measure for every $\ve{i}\in I^\alpha$. Since $\ov{U}_{\ell}\subset U_{\ell+1}$, we conclude, from~\eqref{E:RestrictionEquiv}, that $\vy^\alpha(\dom'_i\times\{\vzet_0\})\subset\subset U_{\ell+1}$ a.e.. Thus $\Phi'_{\alpha,i,j}\supseteq\bigcup_{\ell=1}^\infty\bigcap_{\ve{i}\in I^\alpha}^\infty\Phi'_{\ve{i},\ell}$.

Fix $\ve{i}\in I^\alpha$ and $\ell\in\nats$. We will show that $\Phi_j\bs\Phi'_{\ve{i},\ell}$ is an open set. Let $K_{\ve{i}}<\infty$ be the Lischitz constant for $\wh{\vy}_{\ve{i}}$ in $\Upsilon_{\ve{i}}$. Let $\vzet_0\in\Phi_j\bs\Phi'_{\ve{i},\ell}$ be given. Then $|\wh{\vy}_{\ve{i}}^\alpha(\dom'_{\ve{i}}(\vzet_0)\times\{\vzet_0\})\bs\ov{U}_\ell|>0$. Since the Lebesgue measure is inner regular, there exists a compact $Y'_0\subseteq\wh{\vy}_{\ve{i}}^\alpha(\dom'_{\ve{i}}(\vzet_0)\times\{\vzet_0\})
\bs\ov{U}_\ell$ such that $|Y'_0|>0$. As $Y'_0\subseteq\ren\bs\ov{U_\ell}$, we find $\vep_0:=\dist(Y'_0,\ov{U_\ell})>0$. Set
\[
  X'_0:=\lc\vx\in\dom_{\ve{i}}(\vzet_0):\wh{\vy}_{\ve{i}}^\alpha(\vx,\vzet_0)\in Y'_0\rc.
\]
Now $\dom'_{\vzet_0}\subseteq\dom_{\ve{i}}(\vzet_0)$ and $\vy^\alpha_{\ve{i}}(\cdot,\vzet_0)$ is continuous on $\dom_{\ve{i}}(\vzet_0)$.
Thus assumption~(\ref{A:detBounds}) and Corollary~\ref{C:LipPreImage} imply $|X'_0|>0$. From the continuity and injectivity of $\wh{\vy}_{\ve{i}}(\cdot,\vzet_0)$, we conclude that $X'_0$ is compact and that $X'_0\subseteq\dom'_{\ve{i}}(\vzet_0)$. Recalling that $\Upsilon_{\ve{i}}$ is an open set, we conclude that $\delta_0:=\dist(X'_0\times\{\vzet_0\},\ren\bs\Upsilon_{\ve{i}})>0$. Let $\vzet\in\Phi_j$ such that $\|\vzet-\vzet_0\|_{\ren}<\delta:=\min\lc\delta_0,\frac{\vep_0}{K_{\ve{i}}}\rc$. Then $X'_0\times\{\vzet\}\subseteq\Upsilon_{\ve{i}}$, and so $X'_0\subseteq\dom'_{\ve{i}}(\vzet)$. Set $Y':=\wh{\vy}_{\ve{i}}^\alpha(X'_0,\vzet)$. Again, since $X'_0$ is compact, so is $Y'$. For each $\vx\in X_0'$, we have
\[
  \|\wh{\vy}^\alpha_{\ve{i}}(\vx,\vzet)-\wh{\vy}^\alpha_{\ve{i}}(\vx,\vzet_0)\|
  \le
  K_{\ve{i}}\|\vzet-\vzet_0\|_{\ren}
  <\vep_0.
\]
Thus, we must find $\wh{\vy}^\alpha_{\ve{i}}(\vx,\vzet)\in\ren\bs\ov{U_\ell}$, since $\wh{\vy}^\alpha_{\ve{i}}(\vx,\vzet_0)\in Y'_0$ and $\dist(Y'_0,\ov{U_\ell})=\vep_0$. Hence $Y'\subset\ren\bs\ov{U_\ell}$. In fact, the compactness of $Y'$ yields $\dist(Y',\ov{U_\ell})>0$. Finally, assumption~(\ref{A:detBounds}) allows us to conclude that there exists a $0<C<\infty$ such that
\[
  |Y'|=\lint{\wh{\vy}_{\ve{i}}^\alpha(X'_0,\vzet)}\dd\vy'
  =\lint{X'_0}|\det\partial_{\vx}\wh{\vy}_{\ve{i}}^\alpha(\vx,\vzet)|\dd\vx
  =C|X'_0|>0.
\]
This shows that $\vzet\notin\Phi'_{\ve{i},\ell}$. Since $\vzet\in\bll_\delta(\vzet_0)\subseteq\Phi_j\bs\Phi'_{\ve{i},\ell}$ was arbitrary, we have verified that $\Phi_j\bs\Phi'_{\ve{i},\ell}$ is open, and thus $\Phi'_{\ve{i},\ell}$ is relatively closed in $\Phi_j$. This establishes $\Phi'_{\alpha,i,j}\in\class{B}(\ren)$.

We conclude the proof with a simple induction argument. Clearly
\[
  \Phi'_1=\bigcap_{i\in I}\bigcup_{j\in J}\Phi_{1,i,j}\in\class{B}(\ren).
\]
Let $\alpha\in\nats$ be given, and suppose that $\Phi'_{\alpha'}\in\class{B}(\ren)$, for each $1\le\alpha'\le\alpha$. Then
\[
  \Phi'_{\alpha+1}=\bigcap_{i\in I}\bigcup_{j\in J}\Phi_{\alpha+1,i,j}
    \bs\bigcup_{\alpha'=1}^{\alpha}\Phi'_{\alpha'}\in\class{B}(\ren).
\]
\end{proof}

The next two lemmas are inspired by some of the arguments in~\cite{TiaDu:17a}. The first is a generalization of the argument presented in Example~\ref{Ex:BasicEx2}. The next one is provides the basis for applying Poincar\'{e} Inequality II when the zero-Dirichlet constraint is imposed on a lower dimensional manifold.
\begin{lemma}\label{L:ControlLemmaBase}
Let $\gamma\in L^+(E)$, $\rho\in L(E\times\ren)$, $u\in L(\ren)$, and $\Psi:E\twoheadrightarrow\ren$ be given. Assume the following:
\begin{enumerate}[(i)]
\item\label{A:PsiMeas} $\Psi$ and $\Psi^{-1}$ are measurable-valued and $[\vx\mapsto|\Psi(\vx)|]\in L(E;(0,\infty])$.
\item\label{A:MeasCorrRange} The range $F:=\bigcup_{\vx\in E}\Psi(\vx)$ is measurable.
\item\label{A:ugammaInt} $|u|^p\gamma\in L^1(E)$, for some $1\le p<\infty$;
\item\label{A:uZero} $u(\vx)=0$, for a.e. $\vx\in F\bs E$.
\item\label{A:RNonzero} For each $\vx\in E$, we find $\rho(\vx,\cdot)\in L^\frac{p}{p-1}(\ren)$ and
\[
  \lint{\Psi(\vx)}\rho(\vx,\vy-\vx)\dd\vy=1.
\]
  Define $R\in L(E;(0,\infty))$ by
\[
  R(\vx):=\lc\begin{array}{ll}
    \ds{\lp\lint{\Psi(\vx)}
    |\rho(\vx,\vy-\vx)|^\frac{p}{p-1}\dd\vy\rp^{p-1}},
    & 1<p<\infty,\\
    \ds{\esssup_{\vy\in\Psi(\vx)}|\rho(\vx,\vy-\vx)|},
    & p=1.
  \end{array}\right.
\]
\item There exists a $0<\nu<1$ such that for each $\vy\in E$,
\begin{equation}\label{A:gammaBound}
  \lint{\Psi^{-1}(\vy)}R(\vx)\gamma(\vx)\dd\vx\le\nu \gamma(\vy).
\end{equation}
\end{enumerate}
Put
\begin{equation}\label{D:ControlLemmaBaseConst}
  C(\nu,p)
  :=
  \lc\begin{array}{ll}
    \frac{1}{1-\nu}, & p=1\\
    \min\lc\frac{1}{\lambda^{p-1}-\nu}\lp\frac{\lambda}{1-\lambda}\rp^{p-1}
      :\nu^{\frac{1}{p-1}}<\lambda<1\rc,
    & 1<p<\infty.
  \end{array}\right.
\end{equation}
Then
\begin{equation}\label{E:ControlLemmaBaseIneq2}
  \lint{E}|u(\vx)|^p\gamma(\vx)\dd\vx
  \le C(\nu,p)
  \lint{E}\left|\lint{\Psi(\vx)}
    [u(\vy)-u(\vx)]\rho(\vx,\vy-\vx)\dd\vy\right|^p
    \gamma(\vx)\dd\vx,
\end{equation}
provided the upper bound is well-defined in $[0,\infty]$. If, in particular, we have $0<|\Psi(\vx)|<\infty$, $\rho(\vx,\vz)=|\Psi(\vx)|^{-1}\chi_{\Psi(\vx)}(\vz)$, for all $(\vx,\vz)\in E\times\ren$, then
\begin{equation}\label{E:ControlLemmaBaseIneq1}
  \lint{E}|u(\vx)|^p\gamma(\vx)\dd\vx
  \le C(\nu,p)
  \lint{E}\flint{\Psi(\vx)}|u(\vy)-u(\vx)|^p\gamma(\vx)\dd\vy\dd\vx.
\end{equation}
\end{lemma}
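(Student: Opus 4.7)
The plan is to adapt the argument outlined in Example~\ref{Ex:BasicEx1} but formulated for the weighted kernel $\rho$ rather than a pure average. Since $\int_{\Psi(\vx)}\rho(\vx,\vy-\vx)\dd\vy=1$ by assumption~(\ref{A:RNonzero}), I can write the identity
\[
  u(\vx)
  =\lint{\Psi(\vx)}[u(\vx)-u(\vy)]\rho(\vx,\vy-\vx)\dd\vy
   +\lint{\Psi(\vx)}u(\vy)\rho(\vx,\vy-\vx)\dd\vy
  =:-G(\vx)+H(\vx).
\]
For $1<p<\infty$ and any $0<\lambda<1$, the convexity inequality $(a+b)^p\le\lambda^{1-p}a^p+(1-\lambda)^{1-p}b^p$ (valid for $a,b\ge 0$) applied to $|u|\le|G|+|H|$ yields $|u|^p\le\lambda^{1-p}|G|^p+(1-\lambda)^{1-p}|H|^p$. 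Multiplying by $\gamma$ and integrating over $E$ gives two terms; the first becomes the right-hand side of~\eqref{E:ControlLemmaBaseIneq2}, and everything reduces to absorbing the second.

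To control the $H$-term, I apply H\"older's inequality in $\vy$ with conjugate exponents $p$ and $\frac{p}{p-1}$, which produces $|H(\vx)|^p\le R(\vx)\int_{\Psi(\vx)}|u(\vy)|^p\dd\vy$ (for $p=1$ this reduces to the obvious $|H(\vx)|\le R(\vx)\int_{\Psi(\vx)}|u(\vy)|\dd\vy$ with $R(\vx)=\esssup|\rho(\vx,\cdot)|$). Then a Fubini--Tonelli swap justified by assumption~(\ref{A:PsiMeas}), together with the defining relation $\vy\in\Psi(\vx)\Leftrightarrow\vx\in\Psi^{-1}(\vy)$ from~(\ref{A:MeasCorrRange}), gives
\[
  \lint{E}|H(\vx)|^p\gamma(\vx)\dd\vx
  \le\lint{F}|u(\vy)|^p\lp\lint{\Psi^{-1}(\vy)}R(\vx)\gamma(\vx)\dd\vx\rp\dd\vy.
\]
Since $u\equiv 0$ a.e.\ on $F\bs E$ by~(\ref{A:uZero}) and the inner integral is bounded by $\nu\gamma(\vy)$ on $E$ by~(\ref{A:gammaBound}), the right side is at most $\nu\int_E|u|^p\gamma$. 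At this point the finiteness assumption~(\ref{A:ugammaInt}) is essential: it guarantees $\int_E|u|^p\gamma<\infty$, so I may legitimately subtract.

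Rearranging produces
\[
  \bigl[1-(1-\lambda)^{1-p}\nu\bigr]\lint{E}|u|^p\gamma\dd\vx
  \le\lambda^{1-p}\lint{E}|G|^p\gamma\dd\vx,
\]
and the constraint $(1-\lambda)^{p-1}>\nu$ (i.e.\ $1-\lambda>\nu^{1/(p-1)}$) forces the left bracket to be positive. Reindexing via $\mu:=1-\lambda$ and optimizing over $\mu\in(\nu^{1/(p-1)},1)$ gives the constant~\eqref{D:ControlLemmaBaseConst} and establishes~\eqref{E:ControlLemmaBaseIneq2}; the $p=1$ case is the same argument without the convexity split, producing $C=1/(1-\nu)$. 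Finally, to get~\eqref{E:ControlLemmaBaseIneq1}, I specialize $\rho(\vx,\vz)=|\Psi(\vx)|^{-1}\chi_{\Psi(\vx)}(\vz)$ in~\eqref{E:ControlLemmaBaseIneq2} and apply Jensen's inequality inside the outer integrand, $\bigl|\fint_{\Psi(\vx)}[u(\vy)-u(\vx)]\dd\vy\bigr|^p\le\fint_{\Psi(\vx)}|u(\vy)-u(\vx)|^p\dd\vy$. The main subtlety I expect is the measurability and well-definedness at the Fubini step (in particular that $[\vy\mapsto\int_{\Psi^{-1}(\vy)}R\gamma\dd\vx]$ is a measurable function of $\vy\in F$), which is handled by assumption~(\ref{A:PsiMeas}) together with standard arguments for correspondence-valued integrands.
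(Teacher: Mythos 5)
Your proposal is correct and follows essentially the same route as the paper's proof: express $u$ via the normalization identity, split with the elementary convexity inequality $(a+b)^p\le\lambda^{1-p}a^p+(1-\lambda)^{1-p}b^p$, bound the second term using H\"older (giving $R$), swap integration order via Fubini--Tonelli, use the zero condition on $F\setminus E$ and hypothesis~\eqref{A:gammaBound} to absorb, and finally invoke Jensen for~\eqref{E:ControlLemmaBaseIneq1}. Your $\lambda\leftrightarrow 1-\lambda$ relabeling relative to the paper is cosmetic and your reindexing $\mu=1-\lambda$ correctly recovers the constant~\eqref{D:ControlLemmaBaseConst}.
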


\begin{remark}\label{R:OptControlConst}
\begin{enumerate}[(a)]
\item For~\eqref{E:ControlLemmaBaseIneq2}, it is not necessary for $\vx\mapsto|\Psi(\vx)|$ to be finitely valued.
\item A straightforward computation shows that $C(\nu,2)=\lp\frac{1}{1-\sqrt{\nu}}\rp^2$.
\end{enumerate}
\end{remark}

\begin{proof}
Inequality~\eqref{E:ControlLemmaBaseIneq1} is trivial, if the upper bound is infinite. We focus on establishing~\eqref{E:ControlLemmaBaseIneq2}, since~\eqref{E:ControlLemmaBaseIneq1} follows from~\eqref{E:ControlLemmaBaseIneq2} and Jensen's inequality. Assume the upper bound in~\eqref{E:ControlLemmaBaseIneq2} is well-defined and finite. We may assume
\[
  \lint{E}\left|\lint{\Psi(\vx)}
    [u(\vy)-u(\vx)]\rho(\vx,\vy-\vx)\dd\vy\right|^p
      \gamma(\vx)
    \dd\vx<\infty.
\]
Otherwise the result is immediate.

The convexity of $\tau\mapsto|\tau|^p$ implies that, for any $0<\lambda<1$ and $a,b\in\re$,
\[
  |a|^p\le(1-\lambda)^{1-p}|b-a|^p+\lambda^{1-p}|b|^p.
\]
Using this inequality, we may write
\begin{align}\label{E:IteratedIneq1}
\nonumber
  \lint{E}|u(\vx)|^p\gamma(\vx)\dd\vx
  &=
  \lint{E}\left|\lint{\Psi(\vx)}u(\vx)
    \rho(\vx,\vy-\vx)\dd\vy\right|^p
    \gamma(\vx)\dd\vx\\
\nonumber
  &\le
  (1-\lambda)^{1-p}\lint{E}\left|\lint{\Psi(\vx)}[u(\vy)-u(\vx)]
    \rho(\vx,\vy-\vx)\dd\vy\right|^p
    \gamma(\vx)\dd\vx\\
\nonumber
  &\qqquad+\lambda^{1-p}\lint{E}\left|\lint{\Psi(\vx)}
    u(\vy)\rho(\vx,\vy-\vx)\dd\vy\right|^p
    \gamma(\vx)\dd\vx\\
  &\le
  (1-\lambda)^{1-p}\lint{E}\left|\lint{\Psi(\vx)}
    [u(\vy)-u(\vx)]\rho(\vx,\vy-\vx)\dd\vy\right|^p
    \gamma(\vx)\dd\vx\\
\nonumber
  &\qqquad+\lambda^{1-p}\lint{E}
    \lint{\Psi(\vx)}R(\vx)\gamma(\vx)|u(\vy)|^p\dd\vy\dd\vx.
\end{align}
In the last integral, we applied H\"{o}lder's inequality and the definition of $R$ in~\eqref{A:RNonzero}. Focusing on the last iterated integral, we see that
\begin{align}
\nonumber
  \lint{E}\lint{\Psi(\vx)}
    R(\vx)\gamma(\vx)|u(\vx)|^p\dd\vy\dd\vx
  &=
  \lint{E}\lint{F}
    R(\vx)\gamma(\vx)\chi_{\Psi(\vx)}(\vy)
    |u(\vy)|^p\dd\vy\dd\vx\\
\nonumber
  &\qqquad=
  \lint{F}\lint{E}R(\vx)\gamma(\vx)\chi_{\Psi^{-1}(\vy)}(\vx)
    |u(\vy)|^p\dd\vx\dd\vy\\
\nonumber
  &\qqquad=
  \lint{E\cap F}\lp\lint{\Psi^{-1}(\vy)}R(\vx)\gamma(\vx)\dd\vx\rp
    |u(\vy)|^p\dd\vy\\
\label{E:ControlBaseIneq1}
  &\qqquad\le
  \nu\lint{E}\gamma(\vy)|u(\vy)|^p\dd\vy,
\end{align}
where assumption~(\ref{A:MeasCorrRange}) allowed us to use the Fubini-Tonelli theorem for the second equality. From assumption~(\ref{A:uZero}), we deduced the third. Assumption~(\ref{A:ugammaInt}) was used to obtain the final inequality. Incorporating the above bound into the inequality in~\eqref{E:IteratedIneq1} yields
\begin{multline*}
  \lint{E}|u(\vx)|^p\gamma(\vx)\dd\vx
  \le
  (1-\lambda)^{1-p}\lint{E}\left|\flint{\;\Psi(\vx)}
    [u(\vy)-u(\vx)]\rho(\vx,\vy-\vx)
    \dd\vy\right|^p\gamma(\vx)\dd\vx\\
  +\lambda^{1-p}\nu\lint{E}|u(\vx)|^p\gamma(\vx)\dd\vx.
\end{multline*}
Absorbing the rightmost integral into the lower bound yields the result.
\end{proof}

The following provides a class of correspondences $\Psi$ and functions $\gamma$ for which the previous lemma is applicable when $\rho\equiv|\Psi|^{-1}$. The conditions allow the set $G$ to be a part of the boundary of $E$ or some subset of $\ren$ with co-dimension larger than one.

\begin{lemma}\label{L:DistClass}
Let $G\subset\ren$ be a compact set and $E\subseteq\ren\bs G$ be a bounded measurable set. Suppose that $\gamma\in L^+(E)$ and that $\Psi:E\twoheadrightarrow E$ satisfies hypothesis~(\ref{A:PsiMeas}) in Lemma~\ref{L:ControlLemmaBase}. Let $0<a<b\le1$ be given, and define $I:E\twoheadrightarrow(0,\infty)$ by $I(\vx):=\lp\frac{1}{b}\cdot d_G(\vx),\frac{1}{a}\cdot d_G(\vx)\rp$. Assume that there exists $K_1,K_2,K_3<\infty$, $1\le\alpha_1\le n$, $1\le\alpha_2\le n-\alpha_1$, and $\beta\ge\alpha_2$ such that following hold.
\begin{enumerate}[(i)]
  \item\label{A:PsiInvSuperset} $\Psi^{-1}(\vx)\subseteq E\cap\ann_{I(\vx)}(G)$, for all $\vx\in E$.
  \item\label{A:PsiMeasBnd} There exists $1\le\alpha\le n$, $1\le\alpha_2\le n-\alpha_1$, and $K_1,K_2<\infty$ such that, for $\vx,\vy\in E$ and a.e. $\tau\in I(\vx)$,
\[
  \ds{|\Psi(\vx)|\ge\frac{b^{\alpha_1}-a^{\alpha_1}}{K_1}d_G(\vx)^{\alpha_1}}
\]
  and
\begin{equation}\label{E:TubNbhdFormula}
  \Haus^{n-1}\lp\sph_\tau(G)\cap\Psi^{-1}(\vy)\rp\le K_2 d_G(\vy)^{\alpha_1-\alpha_2}\tau^{\alpha_2-1}.
\end{equation}
  \item\label{A:gammaDistBnd} There exists $K_3<\infty$ such that $1\le d_G(\vy)^\beta \gamma(\vy)\le K_3$, for all $\vy\in E$.
\end{enumerate}
Then
\[
  \lint{\Psi^{-1}(\vy)}\frac{\gamma(\vx)}{|\Psi(\vx)|}\dd\vy
  \le
  K_1K_2K_3\lp\frac{b^{\beta-\alpha_2}}{\alpha_1}\rp \gamma(\vy).
\]
\end{lemma}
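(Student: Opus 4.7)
The plan is to reduce the integral over $\Psi^{-1}(\vy)$ to a one-dimensional integral in the distance variable and then dispatch a short elementary inequality. By assumption~(\ref{A:PsiInvSuperset}) the domain of integration sits inside $\ann_{I(\vy)}(G)$, so I may enlarge the domain and apply Corollary~\ref{C:DistCoarea} to slice by $d_G$:
\[
  \lint{\Psi^{-1}(\vy)}\frac{\gamma(\vx)}{|\Psi(\vx)|}\dd\vx
  \le
  \lint{d_G(\vy)/b}^{d_G(\vy)/a}\lint{\sph_\tau(G)\cap\Psi^{-1}(\vy)}
  \frac{\gamma(\vw)}{|\Psi(\vw)|}\dd\Haus^{n-1}(\vw)\dd\tau.
\]

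Next, on $\sph_\tau(G)$ every point $\vw$ satisfies $d_G(\vw)=\tau$, so assumptions~(\ref{A:PsiMeasBnd}) and~(\ref{A:gammaDistBnd}) give the pointwise bounds $\gamma(\vw)\le K_3\tau^{-\beta}$, $|\Psi(\vw)|\ge(b^{\alpha_1}-a^{\alpha_1})\tau^{\alpha_1}/K_1$, and $\Haus^{n-1}(\sph_\tau(G)\cap\Psi^{-1}(\vy))\le K_2\,d_G(\vy)^{\alpha_1-\alpha_2}\tau^{\alpha_2-1}$. Writing $p:=\beta+\alpha_1-\alpha_2$ and $q:=\alpha_1$ (so that $p\ge q\ge1$ by the standing hypotheses $\beta\ge\alpha_2$ and $\alpha_1\ge 1$), I insert these bounds, notice that the combined power of $\tau$ is $\tau^{-\beta-\alpha_1+\alpha_2-1}=\tau^{-p-1}$, and evaluate the $\tau$-integral to obtain
\[
  \frac{K_1K_2K_3\,d_G(\vy)^{\alpha_1-\alpha_2}}{b^{\alpha_1}-a^{\alpha_1}}
  \lint{d_G(\vy)/b}^{d_G(\vy)/a}\tau^{-p-1}\dd\tau
  =
  \frac{K_1K_2K_3\,d_G(\vy)^{-\beta}(b^{p}-a^{p})}{p(b^{q}-a^{q})}.
\]
Using $d_G(\vy)^{-\beta}\le\gamma(\vy)$ from~(\ref{A:gammaDistBnd}), the remaining task reduces to the purely algebraic inequality
\[
  \frac{b^{p}-a^{p}}{p(b^{q}-a^{q})}\le\frac{b^{p-q}}{q},\qquad 0<a<b\le1,\ p\ge q\ge 1.
\]

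This last inequality is the only step that is not routine bookkeeping; I view it as the main (and only) obstacle. I would handle it by the substitution $t:=a/b\in[0,1)$, which, after dividing by $b^{p-q}$, reduces the claim to $q(1-t^{p})\le p(1-t^{q})$, i.e.\ $h(t):=pt^{q}-qt^{p}$ satisfies $h(t)\le p-q=h(1)$ on $[0,1]$. Since $h'(t)=pq(t^{q-1}-t^{p-1})\ge0$ on $[0,1]$ by $p\ge q\ge 1$, the function $h$ is nondecreasing on $[0,1]$ and the bound holds. Substituting back and using $p-q=\beta-\alpha_{2}$ produces the advertised constant $K_1K_2K_3\,b^{\beta-\alpha_{2}}/\alpha_{1}$ in front of $\gamma(\vy)$. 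The remaining bookkeeping is just matching exponents of $d_G(\vy)$ and of $b,a$ between the slicing step and the target right-hand side, which is mechanical.
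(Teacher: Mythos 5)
Your proposal is correct and follows essentially the same route as the paper: enlarge the domain via assumption~(\ref{A:PsiInvSuperset}) to the annulus $\ann_{I(\vy)}(G)$, slice by the coarea formula of Corollary~\ref{C:DistCoarea}, and insert the pointwise bounds from assumptions~(\ref{A:PsiMeasBnd}) and~(\ref{A:gammaDistBnd}). The only difference is in the final one-dimensional estimate: the paper splits off $\tau^{\alpha_2-\beta}$, bounds it by its value at the lower endpoint $\tau=d_G(\vy)/b$ (using $\alpha_2-\beta\le 0$), and then integrates $\tau^{-\alpha_1-1}$ exactly, whereas you evaluate the full $\tau^{-p-1}$ integral and close with the monotonicity argument for $h(t)=pt^{q}-qt^{p}$; both routes yield the identical constant $K_1K_2K_3\,b^{\beta-\alpha_2}/\alpha_1$.
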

\begin{remark}
\begin{enumerate}[(a)]
\item The set $G$ need not be compact, so long as $d_G$ is a Lipschitz map so that the co-area formula is applicable.
\item Clearly it is irrelevant whether $I(\vx)$ includes its end points.
\item $\sph_\tau(G)$ is the $\tau$-boundary of $G$ with radius $\tau$. If $G$ is an $m$-dimensional manifold and $\alpha_2=n-m$, then the upper bound in~\eqref{E:TubNbhdFormula} resembles the derivative of Weyl's tube formula with respect to the radius $\tau$~\cite{Gra:04a}.
\end{enumerate}
\end{remark}
\begin{proof}
Let $\vy\in E$ be given. By assumptions~(\ref{A:PsiInvSuperset}) and~(\ref{A:PsiMeasBnd}), for each $\vx\in\Psi^{-1}(\vy)$, we have $d_G(\vx)>\frac{1}{b}d_G(\vy)$, so
\[
  0
  \le
  \frac{\gamma(\vx)}{|\Psi(\vx)|}
  \le
  \frac{K_1K_3}{d_G(\vx)^{\beta+\alpha_1}\lp b^{\alpha_1}-a^{\alpha_1}\rp}
  \le
  \frac{K_1K_3 b^{\beta+\alpha_1}}{d_G(\vy)^{\beta+\alpha_1}
    \lp b^{\alpha_1}-a^{\alpha_1}\rp}.
\]
It follows that $\ls\vx\mapsto\frac{\gamma(\vx)}{|\Psi(\vx)|}\rs\in L^1(\Psi^{-1}(\vy))$, for each $\vy\in E$. In fact, by the co-area formula in Corollary~\ref{C:DistCoarea}, we find
\begin{align*}
  \lint{\Psi^{-1}(\vy)}\frac{\gamma(\vx)}{|\Psi(\vx)|}\dd\vy
  &\le
  \frac{K_1K_3}{b^{\alpha_1}-a^{\alpha_1}}
  \lint{I(\vy)}\lp\lint{\sph_\tau(G)\cap\Psi^{-1}(\vy)}
    \tau^{-\alpha_1-\beta}\dd\Haus^{n-1}(\vw)\rp\dd\tau\\
  &\le
  \frac{K_1K_2K_3 d_G(\vy)^{\alpha_1-\alpha_2}}{b^{\alpha_1}-a^{\alpha_1}}
    \luint{\frac{1}{b}d_G(\vy)}{\frac{1}{a}d_G(\vy)}
      \tau^{\alpha_2-\alpha_1-\beta-1}\dd\tau.
\end{align*}
Since $\alpha_2-\beta\le 0$ and $\alpha_1>0$,
\begin{align*}
  0\le
  \luint{\frac{1}{b}d_G(\vy)}{\frac{1}{a}d_G(\vy)}
      \tau^{\alpha_2-\alpha_1-\beta-1}\dd\tau
  &\le
  \lp\frac{1}{b}d_G(\vy)\rp^{\alpha_2-\beta}
    \luint{\frac{1}{b}d_G(\vy)}{\frac{1}{a}d_G(\vy)}
      \tau^{-\alpha_1-1}\dd\tau\\
  &\le
  \lp\frac{b}{d_G(\vy)}\rp^{\beta-\alpha_2}
  \lp\frac{1}{\alpha_1}\rp
  \ls\lp\frac{b}{d_G(\vy)}\rp^{\alpha_1}
    -\lp\frac{a}{d_G(\vy)}\rp^{\alpha_1}\rs\\
  &=
  \lp\frac{b^{\beta-\alpha_2}}{\alpha_1}\rp\lp\frac{b^{\alpha_1}-a^{\alpha_1}}
    {d_G(\vy)^{\beta+\alpha_1-\alpha_2}}\rp.
\end{align*}
Hence, from assumption~(\ref{A:gammaDistBnd}),
\[
  \lint{\Psi^{-1}(\vy)}\frac{\gamma(\vx)}{|\Psi(\vx)|}\dd\vx
  \le
  K_1K_2K_3\lp\frac{b^{\beta-\alpha_2}}{\alpha_1}\rp\frac{1}{d_G(\vy)^\beta}
  \le
  K_1K_2K_3\lp\frac{b^{\beta-\alpha_2}}{\alpha_1}\rp \gamma(\vy).
\]
\end{proof}

The following corollary provides the basis for examples of Poincar\'{e} inequalities with zero-Dirichlet type conditions on flat sets with integer co-dimension equal to one or more. For each $\vx$, the set $\Psi(\vx)$ will be a portion of a cone with a vertex in $\ov{E}\bs E\subseteq G:=\mathbb{H}^m_{(m+1,\dots,n)}$ and contained in a ball with radius proportional to $d_G(\vx)$. Thus, the measure $|\Psi(\vx)|$ decreases as $\vx$ approaches $G$. For convenience, we define $G^\perp:=\mathbb{H}^{n-m}_{(1,\dots,m)}$ and for the volume of a cone contained in an annular region around $G$, we use
\begin{equation}\label{D:ConeMeas}
  c_{m,\theta}:=|\ann_1(G)\cap\set{C}_\theta(\ve{0};\ve{e}_n)|
\end{equation}

\begin{corollary}\label{C:LowDimWts}
Let $R<\infty$, $0<\theta<\frac{\pi}{2}$, $0\le m\le n-1$, $0<b<\frac{1}{\sqrt{n-m}}$, and $\beta>n-m$ be given. Set $E:=(-R,R)^n\bs G$. Define $\Psi:E\twoheadrightarrow E$ by
\begin{equation}\label{D:FlatLowDimPsi}
  \Psi(\vx):=E\cap\ann_{b\cdot d_G(\vx)}(G)
    \cap\set{C}_\theta(\vx';\vom(\vx'')),
\end{equation}
where $\vx':=\vP_{G}(\vx)$ and $\vx'':=\vx-\vx'=\vP_{G^\perp}(\vx)$. Suppose that $\gamma:E\to[0,\infty)$ is measurable and satisfies $1\le d_G^\beta\gamma\le K_3$, for some $K_3<\infty$, then
\begin{equation}\label{E:DistClassConeIneq}
  \lint{\Psi^{-1}(\vy)}\frac{\gamma(\vx)}{|\Psi(\vx)|}\dd\vx
  \le
  \lp\frac{K_3K_4}{n}\rp b^{\beta-(n-m)}\gamma(\vy),
  \frl\vy\in E,
\end{equation}
with
\begin{equation}\label{D:K4}
  K_4=K_4(m,n,\theta)
  :=\lp\frac{2^m\tan^m\theta}{c_{m,\theta}}\rp
  \haus^m(\bll^m_1)\haus^{n-m-1}(\sph^{n-m}_1)
\end{equation}
\end{corollary}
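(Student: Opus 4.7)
The plan is to apply Lemma~\ref{L:DistClass} with $G = \mathbb{H}^m$, $E = (-R,R)^n \setminus G$, and parameters $\alpha_1 = n$, $\alpha_2 = n - m$. The three hypotheses to verify are the inclusion $\Psi^{-1}(\vy) \subseteq E \cap \ann_{I(\vy)}(G)$, the lower bound $|\Psi(\vx)| \ge (b^n - a^n) d_G(\vx)^n / K_1$ with $K_1 = 1/c_{m,\theta}$, and the surface measure estimate with $K_2 = 2^m \tan^m\theta\,\haus^m(\bll^m_1)\,\haus^{n-m-1}(\sph^{n-m}_1)$. The lemma's conclusion then reproduces~\eqref{E:DistClassConeIneq} because $K_1 K_2 = K_4$ and $b^{\beta-\alpha_2}/\alpha_1 = b^{\beta-(n-m)}/n$, while the bound on $\gamma$ is exactly hypothesis~(\ref{A:gammaDistBnd}).

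The inclusion is immediate: if $\vx \in \Psi^{-1}(\vy)$, the annular part of the definition of $\Psi$ forces $d_G(\vx) > d_G(\vy)/b$, while $\vx \in E$ gives $d_G(\vx) \le \sqrt{n-m}\,R$, so any sufficiently small $0 < a < b$ suffices; alternatively, the factor $(b^{\alpha_1} - a^{\alpha_1})$ in the lower bound for $|\Psi(\vx)|$ cancels against the matching factor that arises in the integral bound within the proof of Lemma~\ref{L:DistClass}, so the final constant is independent of $a$. For the volume bound I would decompose $\vx = (\va, \ve{b})$ with $\va \in \re^m$ and $\ve{b} \in \re^{n-m}$, so that $\vx' = (\va, \ve{0})$, $d_G(\vx) = \|\ve{b}\|_{\ren}$, and $\vom(\vx'')$ is a unit vector in $G^\perp$; the affine change of variables $\vy \mapsto \vx' + b\|\ve{b}\|\vy$ rescales $\ann_{b d_G(\vx)}(G) \cap \set{C}_\theta(\vx'; \vom(\vx''))$ onto $\ann_1(G) \cap \set{C}_\theta(\ve{0}; \vom(\vx''))$, which by rotational invariance within $G^\perp$ and the definition~\eqref{D:ConeMeas} has Lebesgue measure $c_{m,\theta}$. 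The restriction $b < 1/\sqrt{n-m}$ keeps the $G^\perp$-component of each rescaled point inside $(-R, R)^{n-m}$, so the whole scaled cone-annulus sits in $E$ and $|\Psi(\vx)| \ge c_{m,\theta}\, b^n\, d_G(\vx)^n$.

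The principal geometric step, and the main obstacle, is the upper bound on $\haus^{n-1}(\sph_\tau(G) \cap \Psi^{-1}(\vy))$. I would parametrize $\vw \in \sph_\tau(G)$ as $\vw = (\vw', \tau\vom)$ with $\vw' \in \re^m$ and $\vom \in \sph^{n-m-1}_1 \subset G^\perp$, so that $d\haus^{n-1}(\vw) = d\vw'\,\tau^{n-m-1}\,d\haus^{n-m-1}(\vom)$. Exploiting the orthogonal decomposition and $\vom \in G^\perp$, the condition $\vy \in \set{C}_\theta(\vw'; \vom)$ reduces, after squaring $(\vy - \vw')\bdot\vom > \cos\theta\,\|\vy - \vw'\|_{\ren}$, to
\[
  \|\vy' - \vw'\|_{\re^m}^2 < B(\vom) := \frac{(\vy'' \bdot \vom)^2}{\cos^2\theta} - \|\vy''\|_{\ren}^2,
\]
so $\vw'$ is restricted to the $m$-ball of radius $\sqrt{B(\vom)}$ about $\vy'$. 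The Cauchy--Schwarz estimate $B(\vom) \le \|\vy''\|_{\ren}^2\tan^2\theta = d_G(\vy)^2 \tan^2\theta$ is uniform in $\vom$; bounding this $m$-ball's Lebesgue measure by $\haus^m(\bll^m_1)\,B(\vom)^{m/2}$ together with a $2^m$ factor absorbed from enclosing the ball in its circumscribed $m$-cube, and then integrating $\vom$ over the full sphere $\sph^{n-m-1}_1$, yields the claimed $K_2$. Combining with the volume bound gives $K_1 K_2 = K_4$, and Lemma~\ref{L:DistClass} closes the proof.
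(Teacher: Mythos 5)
Your overall strategy mirrors the paper's: apply Lemma~\ref{L:DistClass} with $\alpha_1 = n$, $\alpha_2 = n-m$, verify the three hypotheses for the truncated correspondence $\Psi_{a,b}$, and let $a\to 0^+$. The final constant $K_4$ also comes out right. However, the way you distribute the combinatorial factor $2^m$ between $K_1$ and $K_2$ is incorrect in both places, and the two errors only happen to cancel.

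For the volume lower bound, your claim that ``the whole scaled cone-annulus sits in $E$'' only addresses the $G^\perp$-component. The cone $\set{C}_\theta(\vx';\vom(\vx''))$ also spreads in the tangential $\re^m$ directions, with cross-sectional radius up to $b\,d_G(\vx)\tan\theta$ about $\vx' = \vP_G(\vx)$; when $\vx'$ lies near a face or corner of $(-R,R)^m$ this is truncated, and in the corner limit only about a $1/2^m$ fraction survives. Hence the correct bound is $|\Psi(\vx)|\ge (c_{m,\theta}/2^m)\,b^n d_G(\vx)^n$, i.e.\ $K_1 = 2^m/c_{m,\theta}$, whereas your $|\Psi(\vx)|\ge c_{m,\theta}b^n d_G(\vx)^n$ fails for near-boundary $\vx$. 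Conversely, your surface measure estimate has a spurious $2^m$: once you have reduced to $\vw'$ in the $m$-ball of radius $\sqrt{B(\vom)}\le d_G(\vy)\tan\theta$, the Lebesgue measure of that ball is already $\haus^m(\bll^m_1)B(\vom)^{m/2}$; enclosing it in a circumscribed cube is not needed and only loosens the bound. The paper's $K_2 = \tan^m\theta\,\haus^m(\bll^m_1)\,\haus^{n-m-1}(\sph^{n-m}_1)$ is the right value. The product $K_1K_2$ equals $K_4$ in either accounting, so the corollary's conclusion holds, but as written each intermediate inequality in your proof is wrong and would not survive scrutiny on its own.
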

\begin{remark}\label{R:LowDimWts}
\begin{enumerate}[(a)]
\item\label{R:HyperCaps} For any $0\le m\le n-1$, the set $\ann_1(G)\cap\set{C}_\theta$ contains an $n$-dimensional cone with height $\cos\theta$ over the base $\bll^{n-1}_{\sin\theta}$. Thus
\[
  c_{m,\theta}
  >
  c'_{n,\theta}:=\frac{\sin^{n-1}\theta\cos\theta}{n+1}\haus^{n-1}(\bll^{n-1}_1)\
  =\frac{\tan^{n-1}\theta\cos^n\theta}{n+1}\haus^{n-1}(\bll_1^{n-1}).
\]
  A precise formula for $c_{n-1,\theta}$ can be obtained using the volume of hyperspherical caps found in~\cite{Li:11a} along with the volume of the cone.
\item A similar bound can be obtained using cones with vertices at $\vx$ that open towards $G$. If $\Psi:E\twoheadrightarrow E$ is given by
\[
  \Psi(\vx):=E\cap\ann_{b\cdot d_G(\vx)}(G)
    \cap\set{C}_\frac{\pi}{2}(\vx';\vom(\vx'')
    \cap\set{C}_\theta(\vx;-\vom(\vx'')),
\]
  then inequality~\eqref{E:DistClassConeIneq} holds, provided $0<b<\frac{1}{2}$.
\item The set $E$ need not be a cube and may include some, or all, points in $[\partial(-R,R)^n]\bs G$. The key requirements are that there exists $1\le\eta<\infty$ such that
\[
  \eta|\Psi(\vx)|\ge|\ann_{b\cdot d_G(\vx)}(G)\cap\set{C}_\theta|
\]
  and $d_G(\vx)=\|\vx''\|_{\ren}$, for all $\vx\in E$. In the formula for $K_4$, the factor $2^m$ becomes $\eta$. For example, we may take
\[
  E:=\lc\vx\in(-R,R)^n: x_n>0\rc,
\]
  which is useful when imposing zero-Dirichlet conditions on subsets of $\partial\dom$. In this case, we find $\eta=2^{m+1}$.
\end{enumerate}
\end{remark}
\begin{proof}
Clearly $d_G(\vx)=\|\vx''\|_{\ren}$, for each $\vx\in E$. We note that although $G$ is not compact, the distance function $d_G$ is Lipschitz on $E$. For each $r>0$, define $\set{C}'_r:E\twoheadrightarrow\ren$ by
\[
  \set{C}'_r(\vx)
  :=\ann_{r\cdot d_G(\vx)}(G)\cap
    \set{C}_\theta(\vx';\vom(\vx'')).
\]
Thus $\Psi=E\cap\set{C}'_r$.  For each $0<a<b$, define $\Psi_{a,b}:\twoheadrightarrow E$ by
\[
  \Psi_{a,b}:=(E\cap\set{C}'_b)\bs(E\cap\set{C}'_a)=\Psi\bs(E\cap\set{C}'_a).
\]
We will verify the assumptions in Lemma~\ref{L:DistClass} for $\Psi_{a,b}$, and then take the limit as $a\to0^+$.

We first establish the first part of assumption~(\ref{A:PsiMeasBnd}). Since  $0<b<\frac{1}{\sqrt{n-m}}$, we find $b\cdot d_G(\vx)<R$ and, for all $0<r\le b$ and $\vx\in E$,
\[
  |E\cap\set{C}'_r(\vx)|=r^n|E\cap\set{C}'_b(\vx)|
  \nd
  |E\cap\set{C}'_b(\vx)|
  \ge b^n\cdot d_G(\vx)^n\lp\frac{c_{m,\theta}}{2^m}\rp.
\]
Thus
\[
  |\Psi_{a,b}(\vx)|
  =
  |E\cap\set{C}'_b(\vx)|-|E\cap\set{C}'_a(\vx)|
  \ge
  \lp\frac{c_{m,\theta}}{2^m}\rp\lp b^n-a^n\rp d_G(\vx)^n.
\]
Hence the first part of assumption~(\ref{A:PsiMeasBnd}) is satisfied with $\alpha_1=n$ and $K_1=2^m/c_{m,\theta}$.

For the second part of assumption~(\ref{A:PsiMeasBnd}) in Lemma~\ref{L:DistClass}, we note that $d_G(\vx)=\|\vx''\|_{\ren}$ and $\vx'\bdot\vx''=0$, for each $\vx\in E$. Let $0<y_n<R$ be given, and without loss of generality, we assume $y_n\ve{e}_n\in E$ and consider $\Psi_{a,b}^{-1}(y_n\ve{e}_n)$. Suppose that $\vx\in\Psi^{-1}_{a,b}(y_n\ve{e}_n)$. Then, by definition, we find $y_n\ve{e}_n\in\Psi_{a,b}(\vx)$. We conclude that
\begin{equation}\label{E:PsiInvCond1}
  a^2\|\vx''\|_{\ren}^2\le y_n^2<b^2\|\vx''\|^2_{\ren}
  \nd
  \lp\frac{\vx''\bdot y_n\ve{e}_n}{\|\vx''\|_{\ren}}\rp^2
  >\cos^2\theta\cdot\|y_n\ve{e}_n-\vx'\|^2.
\end{equation}
Let $\vx\in\sph_\tau(G)$, with $y_n/b<\tau<y_n/a$, so $\|\vx''\|_{\ren}=\tau$. The last inequality in~\eqref{E:PsiInvCond1} may be rewritten as
\[
  \lp y_n^2+x_1^2+\cdots x_m^2\rp
  <
  \lp\frac{1}{\tau\cos\theta}\rp^2x_n^2\cdot y_n^2,
\]
which implies
\[
  \|\vx'\|_{\ren}^2<y_n^2\lp\frac{x^2_n}{\tau^2\cos^2\theta}-1\rp.
\]
Since $\|\vx''\|=\tau$, we must have $|x_n|\le\tau$. As $d_G(y_n\ve{e}_n)=y_n$, it follows that
\[
  \|\vx'\|_{\ren}<d_G(y_n\ve{e}_n)\tan\theta.
\]
We conclude that $\vx\in\bll^m_s(\ve{0})\times\sph^{n-m}_\tau(\wh{\vx}'')$, where $s:=d_G(y_n\ve{e}_n)\tan\theta$ and $\wh{\vx}''=(x_{m+1},\dots,x_n)\in\re^{n-m}$. Consequently,
\begin{equation}\label{E:PsiInvMeasEst}
  \haus^{n-1}(\sph_\tau(G)\cap\Psi^{-1}_{a,b}(y_n\ve{e}_n))
  \le
  K_2d_G(y_n\ve{e}_n)^m\tau^{n-m-1},
\end{equation}
with
\[
  K_2=\lp\smfrac{1}{\theta^2}-1\rp^\frac{m}{2}\haus^m(\bll_1)\haus^{n-m-1}(\sph_1).
\]
Clearly the inequality in~\eqref{E:PsiInvMeasEst} continues to hold for general $\vy\in E$. This verifies the second part of assumption~(\ref{A:PsiMeasBnd}) with $K_2$ identified above and $\alpha_2=n-m$.

Lemma~\ref{L:DistClass} yields
\[
  \lint{\Psi_{a,b}^{-1}(\vy)}\frac{\gamma(\vx)}{|\Psi_{a,b}(\vx)|}\dd\vy
  \le
  \frac{2^m}{c_{m,\theta}}K_2K_3
  \lp\frac{b^{\beta-(n-m)}}{n}\rp\gamma(\vy)
\]
We observe that $|\Psi_{a,b}(\vx)|\le|\Psi(\vx)|$, for each $\vx\in E$ and $0<a<b$, so $\gamma(\vx)/|\Psi(\vx)|\le\gamma(\vx)/|\Psi_{a,b}(\vx)|$. Furthermore, we see that $\Psi^{-1}_{a,b}(\vy)=\Psi^{-1}(\vy)$, for any $\vy\in E$ and $0<a<1/(\diam(E)\cdot d_G(\vy))$. Thus,
\[
  \lint{\Psi^{-1}(\vy)}\frac{\gamma(\vy)}{|\Psi(\vx)|}\dd\vx
  \le
  \lint{\Psi_{a,b}^{-1}(\vy)}\frac{\gamma(\vx)}{|\Psi_{a,b}(\vx)|}\dd\vx
  \le
  \frac{2^m}{c_{m,\theta}}K_2K_3
  \lp\frac{b^{\beta-(n-m)}}{n}\rp\gamma(\vy).
\]

\end{proof}

From the last remark we deduce a version of Corollary~\ref{C:LowDimWts}, where $G$ corresponds to a half-space/line with a boundary of dimension $m$ imbedded in an $n$-dimensional box.
\begin{corollary}\label{C:LowDimWtsBndry}
Let $R<\infty$, $0<\theta<\frac{\pi}{2}$, $1\le m\le n-1$, $0<b<\frac{1}{\sqrt{n-m+1}}$, $\beta_1>n-m$, and $\beta_2>n-m+1$ be given. Set $G_1:=\mathbb{H}^m_{(m+1,\dots,m)}$, $G_2:=\mathbb{H}^{m-1}_{(1,0,\dots,0,m+1,\dots,n)}$,
\[
  E_1:=(-2R,0)\times(-R,R)^{n-1}\bs G_1
  \nd
  E_2:=[0,R)\times(-R,R)^{n-1}\bs G_2
\]
Define $E:=E_1\cup E_2$, $G:=G_1\cup G_2$, and $\Psi:E\twoheadrightarrow E$ by
\[
  \Psi(\vx):=
  \lc\begin{array}{ll}
    E_1\cap\ann_{b\cdot d_{G_1}(\vx)}(G_1)
      \cap\set{C}_\theta(\vx';\vom(\vx'')),
      & \vx\in E_1\\
    E_2\cap\ann_{b\cdot d_{G_2}(\vx)}(G_2)
      \cap\set{C}_\theta(\vx';\vom(\vx'')),
      & \vx\in E_2,
  \end{array}\right.
\]
where $\vx':=\vP_G(\vx)$, $\vx'':=\vx-\vx'$. Suppose that $\gamma:E\to[0,\infty)$ is measurable and, for $\ell=1,2$, satisfies $1\le d_G^{\beta_\ell}\gamma\le K_{3,\ell}$ on $E_\ell$, for some $K_{3,\ell}$, then
\[
  \lint{\Psi^{-1}(\vy)}\frac{\gamma(\vx)}{|\Psi(\vx)|}\dd\vx
  \le
  \lc\begin{array}{ll}
    \lp\frac{K_{3,1}K_{4,1}}{n}\rp b^{\beta_1-(n-m)}\gamma(\vy),
      & \vy\in E_1,\\
    \lp\frac{K_{3,2}K_{4,2}}{n}\rp b^{\beta_2-(n-m+1)}\gamma(\vy),
      & \vy\in E_2,
  \end{array}\right.
  \frl\vy\in E
\]
with $K_{4,1}:=K_4(m,n,\theta)$ and $K_{4,2}:=K_4(m+1,n,\theta)$.
\end{corollary}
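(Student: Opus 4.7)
The plan is to reduce this corollary to two applications of Corollary~\ref{C:LowDimWts} (augmented by the half-space variant noted in Remark~\ref{R:LowDimWts}(c)), one for each piece of the decomposition $E = E_1 \cup E_2$. The starting observation is that $\Psi$ respects this decomposition: for $\vx \in E_\ell$, the definition of $\Psi(\vx)$ uses only $d_{G_\ell}$ and a cone based at the metric projection onto $G_\ell$, so $\Psi(\vx) \subseteq E_\ell$. Dualizing, $\Psi^{-1}(\vy) \subseteq E_\ell$ whenever $\vy \in E_\ell$, and the integral of interest splits into two independent subproblems indexed by $\ell \in \{1,2\}$ that can be handled separately.

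For $\vy \in E_1$, the setup is precisely that of Corollary~\ref{C:LowDimWts} after a simple translation centering the cube $(-2R,0)\times(-R,R)^{n-1}$: the set $G_1$ is the prescribed $m$-dimensional coordinate subspace, the cone and tubular radii are built from $d_{G_1}$, and $\gamma$ satisfies the corresponding hypothesis with exponent $\beta_1 > n-m$. Direct invocation of Corollary~\ref{C:LowDimWts} then delivers the first bound with constant $K_{4,1} = K_4(m,n,\theta)$ and factor $b^{\beta_1 - (n-m)}/n$. For $\vy \in E_2$, the set $E_2$ is a half-cube relative to the hyperplane $\{x_1 = 0\}$ while $G_2$ is the $(m-1)$-dimensional ``boundary'' of $G_1$ lying in $\{x_1 = 0\}$; this is precisely the configuration of Remark~\ref{R:LowDimWts}(c) with the dimension of $G$ being $m-1$ rather than $m$. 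One verifies the required comparison
\[
  \eta |\Psi(\vx)| \ge |\ann_{b\cdot d_{G_2}(\vx)}(G_2) \cap \set{C}_\theta(\vx';\vom(\vx''))|,
\]
identifies the correct $\eta$, substitutes it into the formula for $K_4$ to obtain $K_{4,2}$, and applies Lemma~\ref{L:DistClass} with codimension $n-m+1$, which produces the factor $b^{\beta_2 - (n-m+1)}/n$ under the hypothesis $\beta_2 > n-m+1$.

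The main obstacle is the geometric bookkeeping in the $E_2$ step: confirming that the cone $\set{C}_\theta(\vx';\vom(\vx''))$, whose axis $\vom(\vx'')$ has a positive first coordinate whenever $\vx \in E_2$, intersects the half-space $E_2$ in a set whose volume is a definite fraction of the full annular-conical region around $G_2$, and pinning down the resulting $\eta$. Once this fraction is extracted, the verification of the measure-theoretic hypotheses (\ref{A:PsiInvSuperset}) and (\ref{A:PsiMeasBnd}) of Lemma~\ref{L:DistClass}, the appeal to Corollary~\ref{C:DistCoarea} with respect to $d_{G_2}$, and the integration over $\tau$ in the resulting tubular neighborhood all proceed verbatim as in the proof of Corollary~\ref{C:LowDimWts}; there is nothing essentially new beyond the geometric adjustment for the half-space and the lower dimension of $G_2$.
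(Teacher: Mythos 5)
Your approach matches the paper's own (very terse) proof: split the integral along the partition $E_1\cup E_2$, apply Corollary~\ref{C:LowDimWts} to $E_1$ after a translation by $R\ve{e}_1$ centering the box, and apply it to $E_2$ via Remark~\ref{R:LowDimWts}(c), treating $G_2$ as an $(m-1)$-dimensional flat inside a half-cube after relabeling coordinates. One caveat worth carrying through: if you actually perform the $K_4$ substitution you describe for $E_2$, namely dimension $m-1$ for $G_2$ and $\eta=2^{(m-1)+1}=2^m$ per the remark, the machinery yields $2K_4(m-1,n,\theta)$, not $K_4(m+1,n,\theta)$ as asserted in the statement (the latter has $\tan^{m+1}\theta$, $\haus^{m+1}(\bll_1^{m+1})$, and $\haus^{n-m-2}(\sph_1^{n-m-1})$, none of which is produced by the $(m-1)$-dimensional analysis, whereas the $b$-exponent $\beta_2-(n-m+1)$ is consistent with codimension $n-m+1$), so the displayed $K_{4,2}$ appears to be a misprint rather than something your argument would reproduce.
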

\begin{proof}
The dimension of $G\cap\ov{E}_1$ is $m$ and the dimension of $G\cap\ov{E}_2$ is $m-1$. The set $G_2\cap E$ can be identified as the boundary of $G$ within $E$. After translation Corollary~\ref{C:LowDimWts} is immediately applicable in the region $E_1$. In view of Remark~\ref{R:LowDimWts}, Corollary~\ref{C:LowDimWts} is also applicable in $E_2$ after elabeling coordinates.
\end{proof}

Interestingly, the exponent for the parameter $b$ in~\eqref{E:DistClassConeIneq} is sensitive to the shape of $\Psi$ as well as how it scales as $\vx$ approaches $G$. For example, if instead of conical regions, one used the cap-like subsets of the balls $\bll_{d_G(\vx)}(\vx)$ contained within tubular neighborhoods of $G$, we may obtain the following:
\begin{corollary}\label{C:GenAttractorExample}
Let $0<b<1$, $0\le m\le n-1$, and $\beta>n-\frac{m}{2}$ be given. Set $G:=\mathbb{H}^m_{(m+1,\dots,n)}$ and $E:=(-1,1)^m\times\bll_1^{n-m}(\ve{0})\bs G$. Define $\Psi:E\twoheadrightarrow E$ by
\[
  \Psi(\vx):=E\cap\bll_{d_G(\vx)}(\vx)\cap\ann_{b\cdot d_G(\vx)}(G).
\]
If $\gamma:E\to[0,\infty)$ is measurable and satisfies $1\le d_G^\beta\gamma\le K_3$, for some $K_3<\infty$, then there exists a constant $K_4'=K_4'(n-m)$ such that
\[
  \lint{\Psi^{-1}(\vx)}\frac{\gamma(\vy)}{|\Psi(\vy)|}\dd\vy
  \le n2^\frac{3m}{2}K_3 K_4'
  \lp\frac{b^{\beta+\frac{m}{2}-n}}{(1-b)^{n-m-1}}\rp \gamma(\vy),
  \frl\vy\in E.
\]
\end{corollary}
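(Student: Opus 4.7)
The strategy is to invoke Lemma~\ref{L:DistClass} with $G := \mathbb{H}^m_{(m+1,\dots,n)}$. Although $G$ is non-compact, $d_G$ is Lipschitz on $\ren$, so the first remark following Lemma~\ref{L:DistClass} permits its use. I plan to identify $\alpha_1 = n$ and $\alpha_2 = n - m/2$ so that the hypothesis $\beta > \alpha_2$ matches the given $\beta > n - m/2$. Verifying hypothesis~(i) is immediate: if $\vx \in \Psi^{-1}(\vy)$, then $\vy \in \Psi(\vx) \subseteq \ann_{b\,d_G(\vx)}(G)$, forcing $d_G(\vy) < b\,d_G(\vx)$, so $d_G(\vx) > d_G(\vy)/b$ and $\Psi^{-1}(\vy) \subseteq E \cap \ann_{I(\vy)}(G)$ with $I(\vy) = (d_G(\vy)/b,\diam(E))$, corresponding to letting $a \to 0^+$ in the lemma.

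For the volume bound, I set $r := d_G(\vx)$, place coordinates so that $\vx = \vx' + r\ve{e}_n$, and write $\vy = (y',y'') \in \re^m \times \re^{n-m}$. The defining conditions $\vy \in \bll_r(\vx) \cap \ann_{br}(G)$ become $\|y'\|^2 + \|y''\|^2 < 2r\,y''_{n-m}$ and $\|y''\| < br$. Integrating first in $y'$ (producing a ball-volume factor) and then passing to cylindrical coordinates $y'' = rt\vom$ with $t \in (0,b)$ and $\vom \in \sph^{n-m-1}$ yields
\[
|\Psi(\vx)| = \haus^m(\bll_1^m)\,\haus^{n-m-2}(\sph_1^{n-m-2})\,r^n\!\int_0^b\! t^{n-m/2-1}\!\!\int_{t/2}^1\!(2u-t)^{m/2}(1-u^2)^{(n-m-3)/2}\,du\,dt,
\]
where $u := \omega_{n-m}$. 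Restricting this double integral to a subregion bounded away from the tangential direction $u = 1$ (where the geometric constraint degenerates as $b \to 1$) and to $t$ close to $b$ produces a lower bound of the form $|\Psi(\vx)| \ge C_0(1-b)^{n-m-1}\,b^n\,r^n$ with $C_0$ explicit, so I may apply the lemma with $K_1 = 1/[C_0(1-b)^{n-m-1}]$.

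For the Hausdorff bound, fix $\vy$ with $\rho := d_G(\vy)$ and take $\vx = (x',x'') \in \sph_\tau(G) \cap \Psi^{-1}(\vy)$ so $\|x''\| = \tau$. The condition $\vy \in \bll_\tau(\vx)$ rewrites as $\|y'-x'\|^2 < \rho(2\tau-\rho) \le 2\rho\tau$ together with the spherical-cap condition $y''\cdot x'' > (\|y'-x'\|^2+\rho^2)/2$ on $\sph^{n-m-1}_\tau$. Applying the trivial cap-area bound $\haus^{n-m-1}(\text{cap}) \le \haus^{n-m-1}(\sph_1^{n-m-1})\,\tau^{n-m-1}$ and integrating over $x' \in \bll^m_{\sqrt{2\rho\tau}}(y')$ produces
\[
\haus^{n-1}(\sph_\tau(G) \cap \Psi^{-1}(\vy)) \le 2^{m/2}\haus^m(\bll_1^m)\,\haus^{n-m-1}(\sph_1^{n-m-1})\,\rho^{m/2}\tau^{n-m/2-1},
\]
so $K_2 = 2^{m/2}\haus^m(\bll_1^m)\haus^{n-m-1}(\sph_1^{n-m-1})$. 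Invoking Lemma~\ref{L:DistClass} delivers the bound $K_1 K_2 K_3 (b^{\beta-(n-m/2)}/n)\,\gamma(\vy)$; the $b$-exponent regroups to $b^{\beta+m/2-n}$, the $(1-b)^{n-m-1}$ from $K_1$ slides into the denominator, and the cancellation between the $\haus^m(\bll_1^m)$ factors present in both $K_1$ (through $C_0$) and $K_2$, combined with the powers of $2$, isolates the explicit prefactor $n\cdot 2^{3m/2}$ and a residual constant $K_4'$ depending only on $n-m$.

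The principal obstacle is the lower bound on $|\Psi(\vx)|$ with its precise $(1-b)^{n-m-1}$ dependence. The factor emerges because, as $b \to 1$, the intersection of the ball $\bll_r(\vx)$ with the annulus $\ann_{br}(G)$ becomes cap-like adjacent to the tangent point, and a careful asymptotic analysis of the inner integrand $(2u-t)^{m/2}(1-u^2)^{(n-m-3)/2}$ near $u = 1$ is needed to extract this power. A secondary delicate point is bookkeeping the combinatorial constants so that $K_4'$ indeed depends only on $n-m$ after the cancellations described above.
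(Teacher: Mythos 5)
Your framework is exactly right and matches what the paper intends: the paper gives no proof for this corollary, only noting that the argument parallels Corollary~\ref{C:LowDimWts} and that $\Psi^{-1}(\vy)$ is a paraboloid slice with anisotropic scaling. Invoking Lemma~\ref{L:DistClass} with $\alpha_1=n$, $\alpha_2=n-m/2$ is the correct choice (the constraint $\alpha_2\le n-\alpha_1$ in the lemma's statement is vestigial; the lemma's own proof uses only $\alpha_2\le\beta$ and $\alpha_1>0$, and the paper already violates it in proving Corollary~\ref{C:LowDimWts}). Your annular-containment check (hypothesis~(i)) and your Hausdorff estimate for $\sph_\tau(G)\cap\Psi^{-1}(\vy)$ via the paraboloid condition $\|y'-x'\|^2+\rho^2<2y''\bdot x''$ are both correct and give $K_2=2^{m/2}\haus^m(\bll_1^m)\haus^{n-m-1}(\sph_1^{n-m-1})$, which is the right $\rho^{m/2}\tau^{n-m/2-1}$ scaling.

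The gap is in the volume lower bound, and it is not merely a bookkeeping issue. You claim $|\Psi(\vx)|\ge C_0(1-b)^{n-m-1}b^n r^n$ arises from "restricting ... to a subregion bounded away from the tangential direction $u=1$," but this is backwards on two counts. First, geometrically, $u=\omega_{n-m}$ is the cosine of the angle with the \emph{radial} direction $\ve{e}_n$ from $G$ (the ball $\bll_r(\vx)$ is tangent to the hyperplane $y''_{n-m}=0$, which is the tangential direction, i.e.\ $u=0$). Second, and more importantly, restricting \emph{away} from $u=1$ — say to $u\in(u_0,u_1)$ with $u_1<1$ — forces $(2u-t)^{m/2}\ge(2u_0-t)^{m/2}\le c^{m/2}$ with $c<1$ on the relevant range, which injects a factor exponentially small in $m$ into $C_0$; after the $\haus^m(\bll_1^m)$ cancellation between $K_1$ and $K_2$, that $c^{-m/2}$ survives and breaks the claimed structure $K_4'=K_4'(n-m)$ independent of $m$. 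The correct move is the opposite: restrict $u$ \emph{near} $1$, specifically $u\in\bigl(\tfrac{1+t}{2},1\bigr)$, on which $2u-t\ge 1$ so $(2u-t)^{m/2}\ge 1$ with no $m$-dependence at all; then
\[
  I(t)\ge\int_{(1+t)/2}^1(1-u^2)^{(n-m-3)/2}\,\dd u
  \;\ge\; c(n-m)\,(1-t)^{(n-m-1)/2},
\]
and it is this shrinking tail, not anything near $u=t/2$, that produces the $(1-b)$ factor and makes the residual constant depend only on $n-m$. (As an aside, this actually yields the sharper exponent $(n-m-1)/2$; since $(1-b)^{(n-m-1)/2}\ge(1-b)^{n-m-1}$, the stated form still holds, and indeed your displayed volume bound is \emph{true} as an inequality — but not for the reason you give.) You should also say a word about $n-m=1$ (where the spherical integral degenerates to the single point $u=1$ and the $(1-b)^{n-m-1}$ factor trivially becomes $1$) and $n-m=2$ (where $(1-u^2)^{(n-m-3)/2}$ is integrable but singular, and the same tail estimate gives $\arccos\tfrac{1+t}{2}\sim\sqrt{1-t}$). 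Finally, the $a\to0^+$ limiting step requires the same $\Psi_{a,b}$ construction as in the proof of Corollary~\ref{C:LowDimWts}; you gesture at it but should verify the volume bound for $\Psi_{a,b}$ rather than $\Psi$.
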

The argument is similar to the one used for Corollary~\ref{C:LowDimWts}. A key difference is that $\Psi^{-1}(\vy)$ can be identified with a slice of a paraboloid. As $\vy$ approaches $G$, the projection of $\Psi^{-1}(\vy)$ onto directions orthogonal to $G$ scales differently from the projection onto directions tangential to $G$.

The next corollary used to establish Poincar\'{e} inequalities when the zero-Dirichlet conditions $m$-dimensional manifolds that can be identified with a transformation of $G:=\mathbb{H}^m_{(m+1,\dots,n)}$. We use the constant $K_4=K_4(m,n,\frac{\pi}{4})$ defined in~\eqref{D:K4}. We will use $\re^{n\otimes3}:=\ren\otimes\ren\otimes\ren$ to denote the space of real $\suprd{3}$-order tensors, with the usual Eucliden norm.
\begin{corollary}\label{C:LowDimSmoothMan}
Let $\bnd_0\subset\ren$, $\beta>n$, $0<\nu<1$, and $\gamma_0\in L^+(\ren\bs\bnd_0)$ be given. Assume the following
\begin{enumerate}[(i)]
\item\label{A:ContDiffg} There is $1\le K_0<\infty$, an open bounded set $U_0$, and a bijective map $\vg\in\cnt^2((-1,1)^n;U_0)$ such that
\begin{itemize}
\item[$\bullet$] $\vg((-1,1)^n\cap G)=U_0\cap\bnd_0$;
\item[$\bullet$] $\vg(\vP_G(\vv))=\vP_{\bnd_0}(\vg(\vv))$, for all $(-1,1)^n$.
\item[$\bullet$] $\vg^{-1}\in\cnt^2(U_0;(-1,1)^n)$;
\item[$\bullet$] $\|\partial_{\vv}\vg\|_{\re^{n\times n}}\le K_0$, $\|\partial_{\vx}\vg^{-1}\|_{\re^{n\times n}}\le K_0$, and $\|\partial^2_{\vv}\vg\|_{\re^{n\otimes 3}}\le\frac{1}{2}K_0^{-1}$.
\end{itemize}
  \item\label{A:gammaGrowthCond} There exists $K_3<\infty$ such that $1\le d_{\bnd_0}^\beta(\vx)\gamma_0(\vx)\le K_3$ for all $\vx\in U_0\bs\bnd_0$.
\end{enumerate}
Then there exists $0<b_0<\frac{1}{\sqrt{n-m}}$ and $0<\theta_0<\frac{\pi}{4}$ such that for all $0<b\le b_0$ and $0<\theta\le\theta_0$,
\begin{equation}\label{E:PsiContainment}
  |\Psi_0(\vx)|^{-1}\le n^\frac{n}{2}K_0^n|\Psi(\vg^{-1}(\vx))|^{-1},\quad
  \Psi_0(\vx)\subseteq\bll_{d_{\bnd_0}(\vx)}(\vx),
  \quad\text{for all }\vx\in U_0\bs\bnd_0,
\end{equation}
and
\[
  \lint{\Psi_0^{-1}(\vy)}\frac{\gamma_0(\vx)}{|\Psi_0(\vx)|}\dd\vx
  \le
  \nu\gamma_0(\vy),
  \frl\vy\in U_0\bs\bnd_0.
\]
Here $\Psi_0:U_0\bs\bnd_0\twoheadrightarrow U_0\bs\bnd_0$ is defined by
\[
  \Psi_0(\vx):=\vg(\Psi(\vg^{-1}(\vx))),
\]
with $\Psi$ provided by~\eqref{D:FlatLowDimPsi}.
\end{corollary}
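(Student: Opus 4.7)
The plan is to pull the flat Corollary~\ref{C:LowDimWts} back to the curved setting via the change of variables $\vv := \vg^{-1}(\vx)$, $\vw := \vg^{-1}(\vy)$. Bijectivity of $\vg$ immediately gives $\Psi_0(\vx) = \vg(\Psi(\vv))$ and $\Psi_0^{-1}(\vy) = \vg(\Psi^{-1}(\vw))$. Hadamard's inequality applied to $\partial_{\vx}\vg^{-1}$ (using $\|\partial_{\vx}\vg^{-1}\|_{\re^{n\times n}} \le K_0$) yields $|\det \partial_{\vx}\vg^{-1}(\vx)| \le n^{n/2} K_0^n$, so $|\det \partial_{\vv}\vg(\vv)| \ge n^{-n/2} K_0^{-n}$ a.e., and
\[
  |\Psi_0(\vx)| = \lint{\Psi(\vv)} |\det \partial_{\vv}\vg(\vv')| \dd\vv' \ge n^{-n/2} K_0^{-n} |\Psi(\vv)|,
\]
which is the volume assertion. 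Next, the projection-commutation hypothesis $\vg\circ\vP_G = \vP_{\bnd_0}\circ\vg$, combined with the $K_0$-bi-Lipschitz control on $\vg$, yields $d_{\bnd_0}(\vg(\vv)) = \|\vg(\vv) - \vg(\vP_G(\vv))\|$ and hence the distance equivalence $K_0^{-1} d_G(\vv) \le d_{\bnd_0}(\vg(\vv)) \le K_0 d_G(\vv)$.

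For the main integral bound, I would change variables $\vx = \vg(\vv)$ to rewrite
\[
  \lint{\Psi_0^{-1}(\vy)}\frac{\gamma_0(\vx)}{|\Psi_0(\vx)|}\dd\vx
  = \lint{\Psi^{-1}(\vw)}\frac{\gamma_0(\vg(\vv))\,|\det \partial_{\vv}\vg(\vv)|}{|\vg(\Psi(\vv))|}\dd\vv.
\]
A direct computation from the definition of $\Psi$ shows each set $\Psi(\vv)$ has diameter at most $d_G(\vv)\sqrt{1 + b^2\sec^2\theta + b^2}$, so the bound $\|\partial^2_{\vv}\vg\| \le \frac{1}{2K_0}$ implies $|\det \partial_{\vv}\vg(\vv')|$ is essentially constant across $\Psi(\vv)$ up to a factor $\eta(b,\theta)\to 1$ as $b,\theta\to 0^+$; in particular $|\det \partial_{\vv}\vg(\vv)|\,|\Psi(\vv)| \le \eta(b,\theta)\,|\vg(\Psi(\vv))|$. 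Defining the pullback weight $\wt\gamma(\vv) := \gamma_0(\vg(\vv))$, the distance equivalence and hypothesis~(\ref{A:gammaGrowthCond}) give $K_0^{-\beta} \le d_G(\vv)^\beta\,\wt\gamma(\vv) \le K_0^\beta K_3$, so $K_0^\beta \wt\gamma$ satisfies the hypothesis of Corollary~\ref{C:LowDimWts} with $K_3$ replaced by $K_0^{2\beta} K_3$. That corollary delivers
\[
  \lint{\Psi^{-1}(\vw)}\frac{\wt\gamma(\vv)}{|\Psi(\vv)|}\dd\vv
  \le \frac{K_0^{2\beta}K_3 K_4(m,n,\theta)}{n}\,b^{\beta-(n-m)}\,\wt\gamma(\vw).
\]
Since $\beta > n-m$, the prefactor $b^{\beta-(n-m)}\to 0^+$ as $b\to 0^+$, so choosing $0 < b_0 < 1/\sqrt{n-m}$ and $0 < \theta_0 < \pi/4$ small enough makes the resulting multiplicative constant $\le \nu$.

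For the containment $\Psi_0(\vx) \subseteq \bll_{d_{\bnd_0}(\vx)}(\vx)$, I would Taylor expand $\vg$ at $\vP_G(\vv) \in G$: the projection compatibility forces $\partial\vg(\vP_G(\vv))$ to map $G$ to $T_{\vg(\vP_G(\vv))}\bnd_0$ and $G^\perp$ to the normal space of $\bnd_0$ at $\vg(\vP_G(\vv))$, so the flat cone defining $\Psi(\vv)$ is carried by the linear part of $\vg$ into a cone adapted to the normal fibration of $\bnd_0$. The $\cnt^2$-remainder contributes at most $O(d_G(\vv)^2)$, which is absorbed by $d_{\bnd_0}(\vg(\vv)) \ge K_0^{-1} d_G(\vv)$ after further shrinking $b_0$ and $\theta_0$.

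The main obstacle is simultaneously controlling three sources of error: (i) the $\cnt^2$-remainder when linearizing $\vg$ on sets of diameter $\sim d_G(\vv)$, (ii) the deviation of $|\det \partial_{\vv}\vg|$ from its value at $\vv$ across $\Psi(\vv)$, and (iii) the mismatch between $d_G$ and $d_{\bnd_0}\circ\vg$. Each error is governed by the smallness of $b$, so the $b^{\beta-(n-m)}$-decay from Corollary~\ref{C:LowDimWts} is precisely what swallows the accumulated multiplicative constants into $\nu$.
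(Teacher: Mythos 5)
Your overall plan --- pull back Corollary~\ref{C:LowDimWts} via the change of variables $\vv=\vg^{-1}(\vx)$, exploit the projection-commutation and bi-Lipschitz control on $\vg$, and let the $b^{\beta-(n-m)}$-decay from the flat corollary absorb the accumulated constants --- matches the paper's proof, and your Jacobian estimate and distance equivalence are identical to the paper's. However, the assertion that $|\det\partial_{\vv}\vg|$ is ``essentially constant across $\Psi(\vv)$ up to a factor $\eta(b,\theta)\to 1$ as $b,\theta\to 0^+$'' is false: the point $\vv$ at which you evaluate the Jacobian does not belong to $\Psi(\vv)$, since $\Psi(\vv)$ lives in the thin annulus $\ann_{b\,d_G(\vv)}(G)$ near the cone vertex $\vP_G(\vv)$ while $\vv$ itself sits at distance $d_G(\vv)$ from $G$; thus $\sup_{\vv'\in\Psi(\vv)}\|\vv-\vv'\|$ stays comparable to $d_G(\vv)$ no matter how small $b$ is, and the Jacobian ratio does not tend to one. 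What you actually need is only a \emph{bounded} $\eta$, and the first part of the very statement you are proving already supplies it: $|\Psi_0(\vx)|^{-1}\le n^{n/2}K_0^n|\Psi(\vg^{-1}(\vx))|^{-1}$ together with $|\det\partial_{\vv}\vg|\le n^{n/2}K_0^n$ give
\[
  \frac{|\det\partial_{\vv}\vg(\vv)|}{|\vg(\Psi(\vv))|}
  \le\frac{n^nK_0^{2n}}{|\Psi(\vv)|},
\]
and the $b^{\beta-(n-m)}$-decay then absorbs the $n^nK_0^{2n}$ into the choice of $b_0$. This is exactly the paper's route, and it is both simpler and correct.

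The second, more substantial gap is the containment $\Psi_0(\vx)\subseteq\bll_{d_{\bnd_0}(\vx)}(\vx)$, which your sketch does not establish. The naive Lipschitz estimate $\|\vg(\vw)-\vg(\vv)\|\le K_0\|\vw-\vv\|$ only lands in $\bll_{K_0 d_G(\vv)}(\vx)$, and since $d_G(\vv)$ can be as large as $K_0 d_{\bnd_0}(\vx)$, this ball can have radius up to $K_0^2 d_{\bnd_0}(\vx)$ --- too big. The paper normalizes coordinates so that $\vx=x_n\ve{e}_n$, $\vv=v_n\ve{e}_n$, $\vP_G(\vv)=\vP_{\bnd_0}(\vx)=\ve{0}$, and then bounds the \emph{normal component} $y_n=\vg(\vw)\bdot\ve{e}_n$ from below by a positive multiple of $\|\vw\|$ (via $\der{}{t}\ls\vg(t\ve{e}_n+\wh{\vw})\bdot\ve{e}_n\rs>\smfrac{1}{2K_0}$, which is where the constraint $\tan\theta_0\le\smfrac{1}{4}K_0^{-2}$ enters); it is the resulting large negative cross term $-2x_ny_n$ in $\|\vy-x_n\ve{e}_n\|^2$ that pulls the distance below $x_n$, once also $b_0\le\cos\theta_0/(2K_0^4)$. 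Your Taylor-at-$\vP_G(\vv)$ idea is the right framing, but the heart of the proof is this cone-to-cone transport plus the cross-term cancellation, and ``the $O(d_G^2)$ remainder is absorbed after shrinking $b_0,\theta_0$'' does not by itself deliver the inequality or the specific constraints on $b_0,\theta_0$.
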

\begin{remark}
\begin{enumerate}[(a)]\label{R:SmoothMan}
\item\label{R:Rescale} If $\bnd$ is a smooth, say $\cnt^2$, manifold, then the normal subbundle to $\bnd$ is a smooth subbundle of the tangent space to $\ren$ (see, for example, Corollary 10.36 in J.~M.~Lee's textbook~\cite{Lee:13a}). Thus at each point $\vx_0\in\bnd$, there exists a neighborhood $U$ of $\vx_0$ and a map $\vg_0$ satisfying the first three bullets in~(\ref{A:ContDiffg}). A rescaling of $\vg_0$ can be used to produce $\vg$ that also satisfies the fourth part of~(\ref{A:ContDiffg}). We refer to the proof for Example~\ref{Ex:LowDimCompMan} for additional details.
\item\label{R:PsiMeasBound} From Remark~\ref{R:LowDimWts}(\ref{R:HyperCaps}) and~\eqref{E:compDist}, we find that
\[
  |\Psi(\vg^{-1}(\vx))|
  \ge\frac{b^n\cdot c_{m,\theta}}{2^m}\cdot d_G(\vg^{-1}(\vx))^n
  >\frac{c'_{n,\theta}}
    {2^m}\lp\frac{b\cdot d_{\bnd_0}(\vx)}{K_0}\rp^n.
\]
  From the proof, we see that we may take
\[
  \theta\le\theta_0=\tan^{-1}\lp\frac{1}{4K_0^2}\rp
  \text{ and }
  b\le b_0=\min\lc\frac{1}{\sqrt{n-m}},\frac{\cos\theta_0}{2K_0^4},
    \lp\frac{\nu}{n^{n-1}K_0^{2n-\beta}K_3K_4}\rp^\frac{1}{\beta-(n-m)}\rc.
\]
Selecting $\theta=\theta_0$ and $b=b_0$ and recalling the formula for $c'_{n,\theta_0}$, we obtain the upper bound
\[
  |\Psi_0(\vx)|^{-1}
  \le
  \frac{2^m}{c'_{n,\theta_0}}
  \lp\frac{n^\frac{1}{2}K_0^{2}}{b_0\cdot d_{\bnd_0}(\vx)}\rp^n
  =
  \frac{2^m(n+1)}{4K_0^2\haus^{n-1}(\bll_1^{n-1})}
  \lp\frac{16n^\frac{1}{2}K_0^6}{b_0\cdot d_{\bnd_0}(\vx)
    \sqrt{1+16K_0^4}}\rp^n.
\]
\item We see that $\ov{\bnd}_0$ is compact, since upon continuously extending $\vg$ to $[-1,1]^n$, it is a continuous image of $[-1,1]^n\cap G$. Thus $d_{\bnd_0}=d_{\ov{\bnd}_0}$ is Lipschitz continuous.
\item\label{R:ManifoldWithBound} There is an obvious adaptation of the proof to establish a version of Corollary~\ref{C:LowDimSmoothMan} for manifolds with a boundary $\partial\bnd_0$. Using the notation from Corollary~\ref{C:LowDimWtsBndry}, in assumption~(\ref{A:ContDiffg}), we require $\partial\bnd_0$ to be the image under $\vg$ of $[0,1)\times(-1,1)^{n-1}\cap G_2$.
\item\label{R:LipManifold} In the case where we only assume $\vg$ and $\vg^{-1}$ are Lipschitz with constant $K_0$, we cannot expect the projection operator to commute with $\vg$. Nevertheless the same result can be obtained, but with $\Psi(\vx)\subseteq\bll_{K_0^2\cdot d_{\bnd_0}(\vx)}(\vx)$ for the last part of~\eqref{E:PsiContainment}. It seems likely, however, that curvature measures and a relative Minkowski content, as introduced in~\cite{Win:19a}, can provide a framework in which one can work directly with $\bnd$ without using a transform to ``flatten it''. This may lead to a result, with the containment $\Psi(\vx)\subseteq\bll_{d_{\bnd_0}(\vx)}(\vx)$, for a general class of compact sets.
\end{enumerate}
\end{remark}
\begin{proof}
We will identify the bounds $b_0$ and $\theta_0$ in the course of the proof. Set $E:=(-1,1)^n\bs G$. Recall that $[\partial_{\vv}\vg(\vv)]^{-1}=\partial_{\vx}\vg^{-1}(\vg(\vv))$, for a.e. $\vv\in(-1,1)^n$. Thus assumption~(\ref{A:ContDiffg}) and Hadamard's inequality (see Remark~\ref{R:FlowRem}(\ref{R:Hadamard})) imply, for all $\vx\in U_0\bs\bnd_0$ and all $\vv\in(-1,1)^n$,
\begin{equation}\label{E:gInvDetBnd}
  |\det\partial_{\vv}\vg(\vv)|^{-1}
  =
  |\det\ls\partial_{\vv}\vg(\vv)\rs^{-1}|
  \le
  n^\frac{n}{2}K_0^n
  \nd
 |\det\partial_{\vv}\vg(\vv)|\le n^\frac{n}{2}K_0^n.
\end{equation}
This and the change of variables formula implies $|\Psi_0(\vx)|\ge n^{-\frac{n}{2}}K_0^{-n}|\Psi(\vg^{-1}(\vx))|$ and thus first part of~\eqref{E:PsiContainment}.

We now verify the second part. Our argument involves showing $\Psi$ is contained within a cone. Let $\vx\in U_0\bs\bnd_0$ and $\vy\in\Psi_0(\vx)$ be given. Put $\vv:=\vg^{-1}(\vx)$ and $\vw:=\vg^{-1}(\vy)$, so $\vw\in\Psi(\vv)$. Using translations and rotations, without loss of generality, we may assume that $\vx=x_n\ve{e}_n$, $\vP_{\bnd_0}(\vx)=\ve{0}$, and $\vv=v_n\ve{e}_n$, for some $x_n>0$ and $0<v_n<1$. Thus
\[
  \vP_{\bnd_0}(\vx)=\ve{0},\quad\vP_G(\vv)=\ve{0},
  \quad d_{\bnd_0}(\vx)=x_n,\quad d_G(\vv)=v_n,
  \quad\text{and}\quad
  \Psi(\vv)\subseteq\set{C}_{\theta}(\ve{0};\ve{e}_n).
\]
Moreover assumption~(\ref{A:ContDiffg}) implies
\begin{equation}\label{E:compDist}
  K_0^{-1}v_n\le x_n\le K_0v_n,
  \nd
  K_0^{-1}d_G(\vw)\le d_{\bnd_0}(\vy)\le K_0 d_G(\vw).
\end{equation}
By the mean value theorem and~\eqref{E:compDist}, for all $0<t<v_n$,
\begin{align}
\nonumber
  &\frac{1}{2K_0}
  \ge
  \sup_{s\in(0,v_n)}\der{{}^2}{s^2}\ls
    \vg(s\ve{e}_n)\bdot\ve{e}_n\rs
  \ge
  \frac{1}{v_n}\lp\frac{x_n}{v_n}-\der{}{t}\vg(t\ve{e}_n)\bdot\ve{e}_n\rp\\
\label{E:ddtgBound}
  \Longrightarrow&
  \der{}{t}\ls\vg(t\ve{e}_n)\bdot\ve{e}_n\rs
  \ge\frac{x_n}{v_n}-\frac{v_n}{2K_0}\ge\frac{1}{K_0}.
\end{align}
Put $w_n:=\vw\bdot\ve{e}_n$, $y_n:=\vy\bdot\ve{e}_n$, $\wh{\vw}:=\vw-w_n\ve{e}_n$, and $\wh{\vy}:=\vy-y_n\ve{e}_n$. Since $\vw\in\set{C}_\theta(\ve{0};\ve{e}_n)$,
\begin{equation}\label{E:hatWBound}
  \|\wh{\vw}\|^2_{\ren}=\|\vw-\ve{0}\|_{\ren}^2-w_n^2< w_n^2\tan^2\theta,
\end{equation}
The bound on $\|\partial^2_{\vv}\vg\|_{\re^{n\otimes3}}$ and~\eqref{E:ddtgBound} implies
\[
  \der{}{t}\ls\vg(t\ve{e}_n+\wh{\vw})\bdot\ve{e}_n\rs>\frac{1}{2K_0},
  \frl 0<t<v_n.
\]
Assuming $\tan\theta_0\le\frac{1}{4}K_0^{-2}$, we deduce that
\[
  y_n=\vg(\vw)\bdot\ve{e}_n>\frac{w_n}{2K_0}-K_0\|\wh{\vw}\|_{\ren}
  \ge\frac{w_n}{4K_0}>\frac{\cos\theta}{4K_0}\|\vw\|_{\ren}.
\]
It follows that
\begin{align*}
  \|\vy-x_n\ve{e}_n\|_{\ren}^2
  &=
  x_n^2-2x_ny_n+\|\wh{\vy}\|^2_{\ren}
  <x_n^2-\frac{\cos\theta}{2K_0}\|\vw\|_{\ren}x_n+\|\vy\|^2\\
  &\le
  x_n^2-\frac{\cos\theta}{2K_0}\|\vw\|_{\ren}x_n+K_0^2\|\vw\|^2\\
  &\le
  x_n^2-\|\vw\|x_n\lp\frac{\cos\theta}{2K_0}-bK_0^3\rp.
\end{align*}
For the last inequality, we used $\Psi(v_n\ve{e}_n)\subseteq\ann_{b\cdot v_n}(\ve{0})$. Suppose that $b_0\le\frac{\cos\theta_0}{2K_0^4}$. Then $\vy\in\bll_{x_n}(x_n\ve{e}_n)$, which proves the second part of~\eqref{E:PsiContainment}.

To prove the last part of the corollary. Define $\gamma:=K_0^\beta(\gamma_0\circ\vg)\in L^+((-1,1)^n\bs G)$. Hence, by~\eqref{E:compDist} and assumption~(\ref{A:gammaGrowthCond}),
\[
  1\le d_{\bnd_0}^\beta(\vg(\vv))\gamma_0(\vg(\vv))
  \le d_G(\vv)^\beta\gamma(\vv)
  \le K_0^{2\beta}K_3,
  \frl\vv\in(-1,1)^n\bs G.
\]
Next we observe that, given $\vy\in U_0\bs\bnd_0$,
\[
  \vx\in\Psi_0^{-1}(\vy)
  \Leftrightarrow\vy\in\Psi_0(\vx)
  \Longleftrightarrow
  \vg^{-1}(\vx)\in\Psi^{-1}(\vg^{-1}(\vy)).
\]
Hence $\Psi_0^{-1}(\vy)=\vg(\Psi^{-1}(\vg^{-1}(\vy)))$. By using the change of variables formula and Corollary~\ref{C:LowDimWts}, we may write
\begin{align*}
  \lint{\Psi_0^{-1}(\vy)}\frac{\gamma_0(\vx)}{|\Psi_0(\vx)|}\dd\vx
  &\le
  n^nK_0^{2(n-\beta)}
  \lint{\vg(\Psi^{-1}(\vg^{-1}(\vy)))}
    \frac{\gamma(\vg^{-1}(\vx))}{|\Psi(\vg^{-1}(\vx))|}
    |\det\partial_{\vx}\vg^{-1}(\vx)|\dd\vx\\
  &=
  n^nK_0^{2(n-\beta)}
  \lint{\Psi^{-1}(\vg^{-1}(\vy))}
    \frac{\gamma(\vv)}{|\Psi(\vv)|}\dd\vv\\
  &\le
  n^{n-1}K_0^{2(n-\beta)}K_3K_4 b^{\beta-(n-m)}\gamma(\vg^{-1}(\vy)).
\end{align*}
The result follows by imposing $b_0\le\lp \nu^{-1}n^{n-1}K_0^{2n-\beta}K_3K_4\rp^{\frac{1}{(n-m)-\beta}}$.
\end{proof}

\section{Nonlocal Poincar\'{e} Inequalities}\label{S:MainResults}

In this section, we present the two main nonlocal Poincar\'{e} inequalities. The first version, Theorem~\ref{T:MainPoinc1}, is a generalization of Example~\ref{Ex:BasicEx1}. It relies on Lemma~\ref{L:ControlLemmaBase} being applicable throughout $\dom$. The second version, Theorem~\ref{T:MainPoinc2}, uses Lemma~\ref{L:ControlLemmaBase} as a complement to Lemma~\ref{L:OpenAbsorptionParamSet}. It covers the setting where the support of the kernel throughout a part of $\dom$ can be identified with a dynamical system with an absorption set $U$. Within the set $U$, the support of the kernel is assumed to have properties that allow the application of Lemma~\ref{L:ControlLemmaBase}. The theorem is stated to include the extreme cases where only one or the other of Lemmas~\ref{L:OpenAbsorptionParamSet} and~\ref{L:ControlLemmaBase} is needed. There are several technical components for the hypotheses of the second version. A couple of corollaries with a simplified set of assumptions are provided after the main results. More explicit examples are presented in the next section. We continue to use the notation introduced at the beginning of Section~\ref{SS:NewLemma}.

\begin{theorem}[Nonlocal Poincar\'{e} Inequality I]\label{T:MainPoinc1}
Let a measurable set $\bnd\subseteq\ren$ be given, and let $\gamma\in L^1_{\loc}(\ren\bs\bnd;[1,\infty))$, $\rho\in L(\dom\times\ren)$, and $\Psi:\dom\twoheadrightarrow\ren$ be given. Define $Z:\dom\twoheadrightarrow\ren$ by $Z(\vx):=\Psi(\vx)-\vx$. Assume the following
\begin{itemize}
\item[$\bullet$] $\Psi$ and $\Psi^{-1}$ are measurable-valued and $[\vx\mapsto|\Psi(\vx)|]\in L^+(\dom)$;
\item[$\bullet$] $F:=\bigcup_{\vx\in\dom}\Psi(\vx)$ is measurable and $F\subseteq\dom\cup\bnd$;
\item[$\bullet$] For each $\vx\in\dom$, we find $\rho(\vx,\cdot)\in L^\frac{p}{p-1}(\ren)$ and
\[
  \lint{Z(\vx)}\rho(\vx,\vz)\dd\vz=1
\]
  Define $R\in L^+(\dom)$ by
\[
  R(\vx):=\lc\begin{array}{ll}
    \ds{\lp\lint{Z(\vx)}
    |\rho(\vx,\vz)|^\frac{p}{p-1}\dd\vz\rp^{p-1}},
    & 1<p<\infty,\\
    \ds{\esssup_{\vz\in Z(\vx)}|\rho(\vx,\vz)|},
    & p=1;
  \end{array}\right.
\]
\item[$\bullet$] There exists a $0<\nu<1$ such that
\begin{equation}\label{E:WeightedJensen}
  \lint{\Psi^{-1}(\vy)}R(\vx)\gamma(\vx)\dd\vx
    \le\nu\gamma(\vy),
  \frl\vy\in\dom.
\end{equation}
\end{itemize}
If $u\in L^1(\ren)$ satisfies (BC) in Section~\ref{S:Intro}, then
\begin{equation}\label{E:MainPoincIneq2}
  \lint{\dom}|u(\vx)|^p\dd\vx
  \le C\lint{\dom}\left|\lint{Z(\vx)}
      [u(\vx+\vz)-u(\vx)]\rho(\vx,\vz)\dd\vz\right|^p\gamma(\vx)\dd\vx.
\end{equation}
Here $C=C(\nu,p)$ denotes the constant defined in~\eqref{D:ControlLemmaBaseConst} of Lemma~\ref{L:ControlLemmaBase}.
\end{theorem}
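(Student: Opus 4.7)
The plan is to derive Theorem~\ref{T:MainPoinc1} as a direct application of Lemma~\ref{L:ControlLemmaBase} with $E=\dom$. Under the change of variables $\vz=\vy-\vx$, the lemma's conclusion~\eqref{E:ControlLemmaBaseIneq2} is exactly~\eqref{E:MainPoincIneq2} (after dropping the weight $\gamma\ge 1$ on the left). Hypotheses (i), (ii), (v), (vi) of the lemma are given verbatim in the theorem. Hypothesis (iv), that $u=0$ a.e.\ on $F\bs\dom$, follows from $F\subseteq\dom\cup\bnd$ (so $F\bs\dom\subseteq\bnd$) and the first part of (BC). Only hypothesis (iii), that $|u|^p\gamma\in L^1(\dom)$, is not a priori available.

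To supply (iii), I would approximate. Set
\[
  D_k:=\bigl\{\vx\in\dom:d_\bnd(\vx)\ge 1/k\text{ and }|u(\vx)|\le k\bigr\},\qquad u_k:=u\chi_{D_k}.
\]
Each $u_k$ is bounded by $k$, vanishes in a neighborhood of $\bnd$, and satisfies $|u_k|^p\gamma\le k^p\gamma\chi_{D_k}\in L^1(\dom)$ by local integrability of $\gamma$ on $\ren\bs\bnd$; moreover $u_k=0$ on $F\bs\dom\subseteq\bnd$, so Lemma~\ref{L:ControlLemmaBase} applies to $u_k$ and yields
\[
  \lint{\dom}|u_k|^p\gamma\,\dd\vx\le C(\nu,p)\lint{\dom}\left|\lint{Z(\vx)}[u_k(\vx+\vz)-u_k(\vx)]\rho(\vx,\vz)\,\dd\vz\right|^p\gamma(\vx)\,\dd\vx.
\]
Monotone convergence (using $|u_k|^p\nearrow|u|^p$ pointwise and $\gamma\ge 1$) delivers $\int_\dom|u_k|^p\gamma\to\int_\dom|u|^p\gamma\ge\int_\dom|u|^p$ as $k\to\infty$.

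The main obstacle is the passage to the limit on the right. Assume WLOG that the right-hand side of~\eqref{E:MainPoincIneq2} is finite; otherwise the inequality is trivial. Writing $\mathbb{G}u_k=\mathbb{G}u-\mathbb{G}w_k$ with $w_k:=u-u_k=u\chi_{\dom\bs D_k}$, the convexity bound
\[
  |\mathbb{G}u_k|^p\gamma\le 2^{p-1}\bigl(|\mathbb{G}u|^p\gamma+|\mathbb{G}w_k|^p\gamma\bigr)
\]
combined with the same H\"older-plus-Fubini-plus-\eqref{E:WeightedJensen} computation used inside the proof of Lemma~\ref{L:ControlLemmaBase} produces
\[
  \lint{\dom}|\mathbb{G}w_k|^p\gamma\,\dd\vx\le 2^{p-1}(1+\nu)\lint{\dom\bs D_k}|u|^p\gamma\,\dd\vx.
\]
The hard part is then to control the tail $\int_{\dom\bs D_k}|u|^p\gamma$ as $k\to\infty$ without circular reference to $|u|^p\gamma\in L^1(\dom)$. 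For this I would re-absorb via the same self-referential scheme that drives Lemma~\ref{L:ControlLemmaBase}, using that $\dom\bs D_k\subseteq\{d_\bnd<1/k\}\cup\{|u|>k\}$ and that the second part of (BC) ensures the average-controlled part of this tail vanishes in the limit. Closing the resulting absorption inequality in the limit yields $\int_\dom|u|^p\gamma\le C(\nu,p)\int_\dom|\mathbb{G}u|^p\gamma$ and thence~\eqref{E:MainPoincIneq2}.
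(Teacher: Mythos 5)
The overall frame is right: reduce to Lemma~\ref{L:ControlLemmaBase} with $E=\dom$, truncate $u$ to $u_k=u\chi_{D_k}$ to supply hypothesis~(iii), use monotone convergence on the left, and pass to the limit on the right. This is exactly the structure of the paper's argument (which is a minor specialization of the passage leading to~\eqref{E:uOnUIneq} in the proof of Theorem~\ref{T:MainPoinc2}, with $\Psi_\alpha,\gamma_\alpha,f_\alpha$ replaced by $\Psi,\gamma$ and the general kernel $\rho$).

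The gap is in the limit passage on the right, and you already sense it. Your route is to write $\mathbb{G}u_k=\mathbb{G}u-\mathbb{G}w_k$ and aim for $\int_\dom|\mathbb{G}w_k|^p\gamma\to0$, which you reduce to $\int_{\dom\bs D_k}|u|^p\gamma\to0$. That last step requires $|u|^p\gamma\in L^1(\dom)$ — precisely what the theorem is meant to produce — so the argument is circular as written. The phrase ``re-absorb via the same self-referential scheme'' is not a proof: the absorption in Lemma~\ref{L:ControlLemmaBase} controls $\int_E|u|^p\gamma$ only when that quantity is already known finite, and nothing in (BC) controls $\int_{\{|u|>k\}}|u|^p\gamma$ without a priori integrability.

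The fix — and what the paper actually does — is to dominate $|\mathbb{G}u_k|^p\gamma$ directly rather than try to show $\mathbb{G}w_k\to0$. Write $g(\vx):=\int_{Z(\vx)}[u(\vx+\vz)-u(\vx)]\rho(\vx,\vz)\,\dd\vz$ and $g_k$ analogously with $u_k$. From $\int_{Z(\vx)}\rho(\vx,\cdot)=1$ one has the algebraic identity $u(\vx)=\int_{\Psi(\vx)}u(\vy)\rho(\vx,\vy-\vx)\,\dd\vy-g(\vx)$, hence the pointwise bound
\begin{equation*}
  |g_k(\vx)|
  \le\lint{\Psi(\vx)}|u_k(\vy)||\rho(\vx,\vy-\vx)|\,\dd\vy+|u_k(\vx)|
  \le|g(\vx)|+2\lint{\Psi(\vx)}|u(\vy)||\rho(\vx,\vy-\vx)|\,\dd\vy.
\end{equation*}
Therefore $|g_k|^p\gamma\le 2^{p-1}|g|^p\gamma+2^{2p-1}\bigl(\int_{\Psi(\vx)}|u||\rho|\bigr)^p\gamma$, and both pieces are in $L^1(\dom)$: the first because the right-hand side of~\eqref{E:MainPoincIneq2} may be assumed finite, the second by the second part of (BC). Dominated convergence then gives $\int_\dom|g_k|^p\gamma\to\int_\dom|g|^p\gamma$, and combining with monotone convergence on the left closes the argument with the sharp constant $C(\nu,p)$ and no circularity. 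So the missing ingredient in your proposal is this dominating function built from $|g|$ and $\int|u||\rho|$ via the identity $u=\int u\rho-g$; with that in place, the rest of your outline goes through.
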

\begin{remark}\label{R:MainPoinc1}
\begin{enumerate}[(a)]
\item For each $\vx\in\dom$, the assumptions $u\in L^1(\ren)$, $\Psi(\vx)$ has finite measure, and $\rho(\vx,\cdot)\in L^\frac{p}{p-1}(\ren)$ imply
\[
  \lint{Z(\vx)}[u(\vx+\vz)-u(\vx)]\rho(\vx,\vz)\dd\vz\in[-\infty,\infty],
  \quad\text{for a.e. }\vx\in\dom.
\]
Therefore, the upper bound in~\eqref{E:MainPoincIneq2} always has well-defined value in $[0,\infty]$.
\item Suppose that $\gamma\in L^1(\dom)$ and there exists $\vep>0$ such that $|\Psi(\vx)|\ge\vep$, for all $\vx\in\dom$. Then $u\in L^1(\ren)$ implies~(BC). Indeed
\[
  \lint{\dom}
  \lp\flint{\Psi(\vx)}|u(\vy)|\dd\vy\rp^p \gamma(\vx)\dd\vx
  \le
  \frac{1}{\vep}\|u\|_{L^1(\ren)}^p\|\gamma\|_{L^1(E)}<\infty.
\]
\item If $|\bnd|=0$, then the requirement that $u=0$ a.e. on $\bnd$ is irrelevant, since such a function always belongs to the equivalence class of $u$ in $L^1$.
\end{enumerate}
\end{remark}
\begin{proof}
  Using~\eqref{E:ControlLemmaBaseIneq2} of Lemma~\ref{L:ControlLemmaBase}, the proof is a minor modification of the one used to obtain~\eqref{E:uOnUIneq} in the proof for the following Theorem~\ref{T:MainPoinc2}.
\end{proof}

\begin{theorem}[Nonlocal Poincar\'{e} Inequality II]\label{T:MainPoinc2}
Let a measurable $\dom'\subseteq\dom$, an open $U\subseteq\ren$, and a compact set $\bnd\subseteq\ov{U}$ be given. Define, for each $i\in I$, the sets
\[
  V:=\dom\bs\ov{U},
  \quad
  V_i:=\dom_i\cap V,
  \quad
  V':=\dom'\cap V,
  \nd
  E:=\ov{U}\bs\bnd
\]
and the maps $\vy_{V},\vz_V:\ren\times\Phi\to\ren$ by $\vz_V:=\vz\chi_{V\times\Phi}$ and
\[
  \vz_V(\vx,\vzet):=\vz(\vx,\vzet)\cdot\chi_V(\vx)
  \nd
  \vy_V(\vx,\vzet):=\vx+\vz_V(\vx,\vzet).
\]
Suppose that $U$ essentially absorbs $V'$ a.e. in the parameterized dynamical system $\{\bbD_{\vzet}\}_{\vzet\in\Phi}$ generated by $\{\vy_V(\cdot,\vzet)\}_{\vzet\in\Phi}$. For each $\alpha\in\nats$,
\begin{itemize}
\item[$\bullet$] set
\[
  \Phi'_\alpha:=\{\vzet\in\Phi:\text{the absorption index into $U$ for $V'$ in $\bbD_{\vzet}$ is $\alpha$}\},
\]
\[
  A:=\{\alpha\in\nats:|\Phi'_\alpha|>0\}
  \nd
  A_0:=A\cup\{0\};
\]
\item[$\bullet$] define $Z'_\alpha:\dom\twoheadrightarrow\ren$ and $\{Y'_{k,\alpha}\}_{k\in\whls}$
\[
  Z'_\alpha(\vx):=\vz(\{\vx\}\times\Phi'_\alpha)
  \nd
  Y'_{k,\alpha}:=\vy_V^k(V'\times\Phi'_\alpha).
\]
\end{itemize}
Assume the following.
\begin{enumerate}[(i)]
\item\label{A:yzPoinc}
\begin{itemize}
\item[$\bullet$] $\vz$ is countably Lipschitz on $\{V_i\times\Phi_j\}_{i\in I,j\in J}$.
\item[$\bullet$] $\vzet\mapsto\vz(\vx,\vzet)$ is an injective Lusin map on $\Phi$, for each $\vx\in V$.
\item[$\bullet$] $\vx\mapsto\vy(\vx,\vzet)$ is a Lusin map on $V$ and countably injective on $\{V_i\}_{i\in I}$, for each $\vzet\in\Phi$.
\end{itemize}
\item\label{A:IndicatrixPoinc} There exists $\{N_k\}_{k\in\whls}\subset L^\infty(\ren\times\ren;\whls)$ such that
\[
  N_{\vy^k_{V}(\cdot,\vzet)}(\vx,V')
  \le
  N_k(\vx,\vz(\vx,\vzet)),
  \quad\text{for a.e. }(\vx,\vzet)\in\ren\times\Phi.
\]
  Put $\|N_\alpha\|_{L^\infty L^1(Y'_{\alpha,\alpha}\times Z'_\alpha(\cdot))}
  :=
  \esssup_{\vx\in Y'_{\alpha,\alpha}}\|N_\alpha(\vx,\cdot)\|_{L^1(Z'_\alpha(\vx))}$.
\item\label{A:BoundsPoinc} There exists $M_V<\infty$ and $\{\Lambda_\alpha\}_{\alpha=1}^\infty,\{\Theta_\alpha\}_{\alpha=1}^{\alpha_0}\subset[0,\infty)$ such that for each $\alpha\in\nats$, $k=0,\dots,\alpha-1$, and a.e. $\vzet\in\Phi'_\alpha$, we have
\begin{align*}
  & |\det\partial_{\ve{x}}\vy(\vx,\vzet)|\Theta_\alpha\ge 1
  \text{ and }
  |\det\partial_{\vzet}\vz(\vx,\vzet)|\Lambda_\alpha\ge 1,
  && \text{for a.e. }\vx\in V
\intertext{and}
  & \mu(\vx,\vz(\vx,\vzet))M_V
  \ge
  \alpha^p\Theta_\alpha^k N_k(\vx,\vz(\vx,\vzet))
    \frac{\Lambda_\alpha}{|\Phi|},
  && \text{for a.e. }\vx\in V\cap Y'_{k,\alpha}.
\end{align*}
\item\label{A:gammaPoinc} There exists $\{\nu_\alpha\}_{\alpha\in A_0}\subset(0,1)$, $\{M_{E,\alpha}\}_{\alpha\in A_0}\subset[0,\infty)$, $\{\gamma_\alpha\}_{\alpha\in A_0}\subset L^1_{\loc}(\ren\bs\bnd)$, 
    and correspondences $\{\Psi_\alpha:E\twoheadrightarrow\ren\}_{\alpha\in A_0}$ such that for each $\alpha\in A_0$,
\begin{itemize}
\item[$\bullet$] $\Psi_\alpha$ and $\Psi_\alpha^{-1}$ are measurable-valued and $[\vx\mapsto|\Psi_\alpha(\vx)|]\in L^+(E)$;
\item[$\bullet$] $F_\alpha:=\bigcup_{\vx\in E}\Psi_\alpha(\vx)$ is measurable and $F_\alpha\subseteq\ov{U}$
\item[$\bullet$] For each $\vy\in E$,
\[
  \gamma_\alpha(\vy)\ge 1
  \nd
  \lint{\Psi_\alpha^{-1}(\vy)}\frac{\gamma_\alpha(\vx)}
    {|\Psi_\alpha(\vx)|}\dd\vx
    \le\nu_\alpha\gamma_\alpha(\vy).
\]
\item[$\bullet$] for a.e. $\vx\in E$ and a.e. $\vy\in\Psi_\alpha(\vx)$,
\[
  \mu(\vx,\vz)M_{E,0}
  \ge
  \frac{C_0}{|\Psi_0(\vx)|}\gamma_0(\vx),
\]
and for all $\alpha\in A$,
\[
  \mu(\vx,\vz)M_{E,\alpha}
  \ge
  \alpha^p\Theta_\alpha^\alpha\|N_\alpha\|_{L^\infty L^1}
  \frac{\Lambda_\alpha}{|\Phi|}\frac{C_\alpha}{|\Psi_\alpha(\vx)|}
  \gamma_\alpha(\vx).
\]
  Here $C_\alpha=C_\alpha(\nu_\alpha,p)$ denotes the constant defined in~\eqref{D:ControlLemmaBaseConst} of Lemma~\ref{L:ControlLemmaBase}.
\item[$\bullet$] $M_E:=\sup_{\vx\in E}\sup_{\vy\in\Psi_\alpha(\vx)}\sum_{\alpha\in A_0}M_{E,\alpha}\chi_{\Psi_\alpha(\vx)}(\vy)<\infty$.
\end{itemize}
\end{enumerate}
Define $M:=\max\{M_E,M_V\}$, $Y':=V\cap\bigcup_{k=0}^\infty\vy^k_V(V'\times\Phi)$, and $Z:\dom\twoheadrightarrow\ren$ by
\[
  Z(\vx):=\lc\begin{array}{ll}
    \vz(\vx,\Phi), & \vx\in V,\\
    \bigcup_{\alpha=0}^\infty\Psi_\alpha(\vx)-\vx, & \vx\in E.
  \end{array}\right.
\]
If $u\in L^1(\ren)$ satisfies $u(\vx)=0$ for a.e. $\vx\in\bnd$ and
\begin{equation}\label{E:uAssumMainPoinc}
  \lint{E}
  \lp\flint{\Psi_\alpha(\vx)}|u(\vy)|\dd\vy\rp^p \gamma_\alpha(\vx)\dd\vx
  <\infty.
\end{equation}
then
\begin{equation}\label{E:FinalMainPoincIneq}
  \lint{\dom'}|u(\vx)|^p\dd\vx
  \le M
    \lint{Y'\cup E}\lint{Z(\vx)}
      |u(\vx+\vz)-u(\vx)|^p\mu(\vx,\vz)\dd\vz\dd\vx.
\end{equation}
\end{theorem}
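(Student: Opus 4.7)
The plan is to split $\int_{\dom'}|u|^p$ along the decomposition $\dom' = V' \cup (\dom' \cap E) \cup (\dom' \cap \bnd)$, where the last piece contributes nothing since $u = 0$ a.e.\ on $\bnd$. I will bound $\int_{V'}|u|^p$ by iterating the change-of-variables estimate of Lemma~\ref{L:OpenAbsorptionParamSet} along the orbits of $\{\bbD_\vzet\}_{\vzet \in \Phi}$, and bound $\int_{\dom' \cap E}|u|^p$ (together with the tail integrals produced by Lemma~\ref{L:OpenAbsorptionParamSet}) via the weighted Poincar\'{e} inequality of Lemma~\ref{L:ControlLemmaBase}. Hypotheses~(\ref{A:yzPoinc}) and~(\ref{A:BoundsPoinc}) verify the measurability assumptions of Lemma~\ref{L:Phi'Measurable}, so each $\Phi'_\alpha \in \class{B}(\ren)$; the essential-absorption hypothesis on $U$ gives $\sum_{\alpha \in A}|\Phi'_\alpha| = |\Phi|$ modulo a null set.

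For the $V'$ estimate, for each $\alpha \in A$ I apply Lemma~\ref{L:OpenAbsorptionParamSet} with $\dom' = V'$, $\Phi' = \Phi'_\alpha$, $k_0 = \alpha$, $\Lambda = \Lambda_\alpha$, $\Theta = \Theta_\alpha$, and $N'_k = N_k$; by definition of the absorption index, $Y'_{\alpha,\alpha} \subset\subset U$ a.e.\ on $\Phi'_\alpha$, so $U$ plays the role of the auxiliary set in~\eqref{E:PoincareIneqBase}. Dividing by $|\Phi|$ and summing over $\alpha \in A$ bounds $\int_{V'}|u|^p$ by a sum of ``intermediate'' integrals over $V \cap Y'_{k,\alpha}$ with weight $\Lambda_\alpha \alpha^{p-1}\Theta_\alpha^k N_k/|\Phi|$, plus ``tail'' integrals over $U \cap Y'_{\alpha,\alpha}$. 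The domination in~(\ref{A:BoundsPoinc}) controls the intermediate weight by $\mu M_V/\alpha$; the injectivity of $\vzet \mapsto \vz(\vx,\vzet)$ from~(\ref{A:yzPoinc}) makes the sets $\{Z'_\alpha(\vx)\}_\alpha$ essentially disjoint, so the pointwise identity $\sum_\alpha \alpha^{-1}\sum_{k=0}^{\alpha-1}\chi_{Y'_{k,\alpha}}(\vx)\chi_{Z'_\alpha(\vx)}(\vz) \leq 1$ collapses the intermediate contributions to $M_V \int_{V \cap Y'}\int_{Z(\vx)}\mu|u(\vx+\vz)-u(\vx)|^p\,d\vz\,d\vx$. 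Using the $L^\infty L^1$-bound on $N_\alpha$ from~(\ref{A:IndicatrixPoinc}), the tail is dominated by a constant multiple of $\int_{U \cap Y'_{\alpha,\alpha}}|u|^p \leq \int_E|u|^p$ (since $u = 0$ on $\bnd$), which is absorbed in the next step.

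For both $\int_{\dom' \cap E}|u|^p$ and the residual $\int_E|u|^p$, for each $\alpha \in A_0$ I apply inequality~\eqref{E:ControlLemmaBaseIneq1} of Lemma~\ref{L:ControlLemmaBase} on $E$ with weight $\gamma_\alpha$ and correspondence $\Psi_\alpha$, whose hypotheses are supplied by~(\ref{A:gammaPoinc}). The a~priori integrability of $|u|^p\gamma_\alpha$ required by the lemma is not directly assumed; I will recover it by applying the lemma to truncations $u_K := \max\{-K,\min\{u,K\}\}$ on an exhausting sequence $E'_\delta := \{\vx \in E : d_\bnd(\vx) > \delta\}$ (on each of which $\gamma_\alpha$ is bounded by the local integrability assumption), then passing to the limits $K \to \infty$ and $\delta \to 0^+$ via monotone convergence on the left and dominated convergence on the right, with~\eqref{E:uAssumMainPoinc} supplying the dominating integrable majorant. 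Changing variables $\vy = \vx + \vz$ and invoking the pointwise domination of $\mu$ in the last bullets of~(\ref{A:gammaPoinc}) converts each piece into $M_{E,\alpha}\int_E \int_{\Psi_\alpha(\vx)-\vx}\mu|u(\vx+\vz)-u(\vx)|^p\,d\vz\,d\vx$; the overlap bound $\sum_\alpha M_{E,\alpha}\chi_{\Psi_\alpha(\vx)}(\vy) \leq M_E$ then consolidates these into $M_E \int_E \int_{Z(\vx)}\mu|u(\vx+\vz)-u(\vx)|^p\,d\vz\,d\vx$.

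Adding the $V'$ and $E$ bounds and taking $M = \max\{M_V,M_E\}$ yields~\eqref{E:FinalMainPoincIneq}. The main obstacle I expect is the combinatorial bookkeeping: the factor $\alpha^{p-1}$ from Lemma~\ref{L:OpenAbsorptionParamSet} must combine with $\Theta_\alpha^k \Lambda_\alpha$ (respectively $\Theta_\alpha^\alpha\|N_\alpha\|_{L^\infty L^1}\Lambda_\alpha$) to match the $\alpha^p$-weighted dominations in~(\ref{A:BoundsPoinc}) (respectively~(\ref{A:gammaPoinc})), with the extra $1/\alpha$ absorbed by the essential disjointness of the $Z'_\alpha(\vx)$; secondarily, the truncation-and-limit justification for Lemma~\ref{L:ControlLemmaBase} needs care since $\gamma_\alpha \in L^1_\loc(\ren \setminus \bnd)$ need not lie in $L^1(E)$, so the exhaustion by $E'_\delta$ is essential.
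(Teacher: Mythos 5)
Your proposal follows essentially the same route as the paper: split $\dom'$ into $V'$ and $\dom'\cap\ov{U}$, apply Lemma~\ref{L:OpenAbsorptionParamSet} with $\Phi'=\Phi'_\alpha$ and $k_0=\alpha$ on the $V'$ side (with Lemma~\ref{L:Phi'Measurable} supplying Borel measurability of $\Phi'_\alpha$ and injectivity of $\vzet\mapsto\vz(\vx,\vzet)$ giving the disjoint partition $\{Z'_\alpha(\vx)\}_\alpha$ of $Z(\vx)$), then feed the tail $\int_E|u|^p$ together with $\int_{\dom'\cap E}|u|^p$ into Lemma~\ref{L:ControlLemmaBase} via a truncate-and-pass-to-the-limit argument, and finally consolidate with the dominations from~(\ref{A:BoundsPoinc}) and~(\ref{A:gammaPoinc}); this is exactly the paper's structure. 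One detail to tighten: in the truncation step Lemma~\ref{L:ControlLemmaBase} should be applied with the domain kept fixed at $E$ and the \emph{function} replaced by $u\,\chi_{\{|u|\le K\}}\,\chi_{E'_\delta}$ (i.e.\ $u$ zeroed outside $E'_\delta$ as well as height-truncated), rather than shrinking the lemma's domain to $E'_\delta$ — if you restrict the domain, the lemma's zero-boundary hypothesis (that the function vanishes on $F\setminus E$) fails, because $\bigcup_{\vx\in E'_\delta}\Psi_\alpha(\vx)\setminus E'_\delta$ may intersect $E\setminus E'_\delta$ where the height-truncated $u$ need not vanish; the paper's $u_\ell=u\chi_{E_\ell}\chi_{T_\ell}$ handles both truncations in one step with the domain $E$ unchanged.
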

\begin{remark}\label{R:SpecPoincCases}
\begin{enumerate}[(a)]
\item If $\vy_V^{k'}(\vx,\vzet)\notin V$, then $\vy_V^k(\vx,\vzet)=\vy_V^{k'}(\vx,\vzet)$, for all $k\ge k'$. Since $U$ is an absorption set for $V$, we may take $N_k(\vx,\cdot)\equiv 1$, for $\vx\notin U\cup V$.
\item If it is assumed that $\ov{U}\bs\dom\subseteq\Gamma$, then $Y'\cup E\subseteq\dom$, so~\eqref{E:FinalMainPoincIneq} implies
\[
    \lint{\dom'}|u(\vx)|^p\dd\vx
  \le M
    \lint{\dom}\lint{Z(\vx)}
      |u(\vx+\vz)-u(\vx)|^p\mu(\vx,\vz)\dd\vz\dd\vx.
\]
\item If $\bnd=\ov{U}$, then $E=\emptyset$ and assumption~(\ref{A:gammaPoinc}) is an empty assumption. Moreover, since only the measurability of $u$ is needed for Lemma~\ref{L:OpenAbsorptionParamSet}, in this case, it is not necessary to assume $u\in L^1$ a priori.
\item If $\dom\subseteq U$, then $V=\emptyset$ and only assumption~(\ref{A:gammaPoinc}) needs to be satisfied.
\item As explained in Remark~\ref{R:MainPoinc1}, if $|\Psi_\alpha(\vx)|$ is uniformly positive throughout $E$ and $\gamma_\alpha\in L^1(E)$, then the requirement in~\eqref{E:uAssumMainPoinc} is implied by $u\in L^1(\ren)$. Moreover, the requirement that $u=0$ a.e. on $\bnd$ is only relevant if $|\bnd|>0$.
\item In the proof, we see that it the sequence of open sets $\{U'_\ell\}_{\ell=1}^\infty$ need only satisfy $|\bigcap_{\ell=1}^\infty U'_\ell\bs\bnd|=0$. Thus, it is only necessary for $\bnd\subseteq\ov{U}$ to be measurable and for $\gamma_\alpha\in L^1(\re\bs U'_\ell)$, for each $\ell\in\nats$ and $\alpha\in A_0$.
\end{enumerate}
\end{remark}

\begin{proof}
Clearly, we may assume that the upper bound in~\eqref{E:FinalMainPoincIneq} is finite. We decompose the domain of integration in the lower bound in~\eqref{E:FinalMainPoincIneq} into the set $V'$ and $\dom'\cap\ov{U}$. The proof relies on using Lemma~\ref{L:OpenAbsorptionParamSet} on the set $V'$ and  Lemma~\ref{L:ControlLemmaBase} on $E$.

By Lemma~\ref{L:Phi'Measurable}, we find $\{\Phi'_\alpha\}_{\alpha=1}^\infty\subset\class{B}(\ren)$. The Lusin property, in assumption~(\ref{A:yzPoinc}), implies the correspondences $\{Z'_\alpha:V\twoheadrightarrow\ren\}_{\alpha=1}^\infty$, defined by $Z'_\alpha(\vx):=\vz(\vx,\Phi'_\alpha)$ are each Lebesgue measurable-valued. Moreover, the injectivity property implies $\{Z'_\alpha(\vx)\}_{\alpha=1}^\infty$ is a partition of $Z(\vx)$, for each $\vx\in V$. Assumptions~(\ref{A:yzPoinc}),~(\ref{A:IndicatrixPoinc}), and~(\ref{A:BoundsPoinc}) allow us to use Lemma~\ref{L:OpenAbsorptionParamSet} to obtain, for each $\alpha\in A$,
\begin{multline*}
  |\Phi'_\alpha|\lint{V'}|u(\vx)|^p\dd\vx
  \le
  \alpha^{p-1}\sum_{k=0}^{\alpha-1}\Lambda_\alpha\Theta_\alpha^k
    \lint{V\cap Y'_{k,\alpha}}\lint{Z'_\alpha(\vx)}
      N_k(\vx,\vz)|u(\vx+\vz)-u(\vx)|^p\dd\vz\dd\vx\\
    +\Lambda_\alpha\alpha^p\Theta_\alpha^\alpha
  \lint{U\cap Y'_{\alpha,\alpha}}\lint{Z'_\alpha(\vx)}
    N_\alpha(\vx,\vz)|u(\vx)|^p\dd\vz\dd\vx.
\end{multline*}
Recall that, by definition of an essentially absorbing set, we find $|\vy^\alpha(V'\times\{\vzet\})\bs U|=0$, for each $\vzet\in\Phi'_\alpha$
For each of the integrals in the summation, we may expand the domain of integration for the outer integral to $Y'$. Since $u=0$ a.e. in $\bnd$, so
\begin{multline}\label{E:IntermPoincIneq1}
  |\Phi'_\alpha|\lint{V'}|u(\vx)|^p\dd\vx
  \le M_V|\Phi|
    \lint{Y'}\lint{Z'_\alpha(\vx)}
      |u(\vx+\vz)-u(\vx)|^p\mu(\vx,\vz)\dd\vz\dd\vx\\
  +\Lambda_\alpha\alpha^p\Theta_\alpha^\alpha\|N_\alpha\|_{L^\infty L^1}
    \lint{E}|u(\vx)|^p\dd\vx.
\end{multline}

We now work to bound the last integral above.\\
\parbox[t]{2.3in}{
Select open sets $\{U'_\ell\}_{\ell=1}^\infty$ such that
\begin{itemize}
\item[$\bullet$] $\bnd\subseteq U'_\ell$, for each $\ell\in\nats$,
\item[$\bullet$] $U'_{\ell+1}\subseteq U'_\ell$, for each $\ell\in\nats$,
\item[$\bullet$] $\bnd=\bigcap_{\ell=1}^\infty U'_\ell$,
\item[$\bullet$] $E_\ell:=E\bs U'_\ell$,
\end{itemize}}
\parbox[t]{2.3in}{
For each $\ell\in\nats$, define
\begin{itemize}
\item[$\bullet$] $T_\ell:=\{\vx\in E:|u(\vx)|\le\ell\}$,
\item[$\bullet$] $u_\ell:=u\chi_{E_\ell}\chi_{T_\ell}\in L^\infty(\ren)$,
\item[$\bullet$] $f_{\alpha,\ell}\in L^\infty(E)$ by
\[
  f_{\alpha,\ell}(\vx):=\flint{\Psi_\alpha(\vx)}u_\ell(\vy)\dd\vy
    -u_\ell(\vx).
\]
\end{itemize}
}\\
We also define the measurable function $f:E\to\reo$ by
\[
  f_\alpha(\vx):=\flint{\Psi_\alpha(\vx)}u(\vy)\dd\vy
    -u(\vx).
\]
By assumption $\gamma_\alpha\in L^1_{\loc}(\ren\bs\bnd)$. Thus $|u_\ell|^p\gamma_\alpha\in L^1(E)$. We may therefore apply Lemma~\ref{L:ControlLemmaBase}. From~\eqref{E:ControlLemmaBaseIneq1},
\begin{equation}\label{E:u_lIneq1Poinc2}
  \lint{E}|u_\ell(\vx)|^p\gamma_\alpha(\vx)\dd\vx
  \le
  C_\alpha\lint{E}|f_{\alpha,\ell}(\vx)|^p
    \gamma_\alpha(\vx)\dd\vy\dd\vx.
\end{equation}
The hypotheses for $\Psi_\alpha$, $|\Psi_\alpha|$, and $\gamma_\alpha$ imply $|f_{\alpha,\ell}|^p\gamma_\alpha$ is nonnegative and measurable. We will use the Lebesgue dominated convergence theorem to argue that the inequality above is preserved in the limit as $\ell\to\infty$. Clearly $\{u_\ell\}_{\ell=1}^\infty$  converges a.e. (in fact everywhere) to $u$ on $E$. By hypothesis, given $\vx\in E$, we find $u\in L^1(\Psi_\alpha(\vx))$ and $|u_\ell(\vy)|\le|u(\vy)|$, for all $\vy\in\Psi_\alpha(\vx)$. The dominated convergence theorem implies
\[
  \lim_{\ell\to\infty}\lint{\Psi_\alpha(\vx)}u_\ell(\vy)\dd\vy=
  \lint{\Psi_\alpha(\vx)}u(\vy)\dd\vy.
\]
Thus $\lim_{\ell\to\infty}f_{\alpha,\ell}(\vx)=f_\alpha(\vx)$ for a.e. $\vx\in E$. Hence $\{|f_{\alpha,\ell}|^p\gamma_\alpha\}_{\ell=1}^\infty$ converges to $|f_\alpha|^p\gamma_\alpha$ a.e. in $E$. Next we observe that, for each $\vx\in E$,
\[
  |f_{\alpha,\ell}(\vx)|
  \le
    \flint{\Psi_\alpha(\vx)}|u_\ell(\vy)|\dd\vy
    +|u_\ell(\vx)|
  \le
  |f_\alpha(\vx)|+2\flint{\Psi_\alpha(\vx)}|u(\vy)|\dd\vy.
\]
Hence
\begin{equation}\label{E:f_alphaBound}
  |f_{\alpha,\ell}(\vx)|^p\gamma_\alpha(\vx)
  \le
  2^{p-1}|f_\alpha(\vx)|^p\gamma_\alpha(\vx)
    +2^p\lp\flint{\Psi_\alpha(\vx)}|u(\vy)|\dd\vy\rp^p\gamma_\alpha(\vx).
\end{equation}
The rightmost term above with a function in $L^1(E)$. For the first term on the right in~\eqref{E:f_alphaBound}, we recall that we are working under the assumption that the upper bound in~\eqref{E:FinalMainPoincIneq} is finite. Suppose that $\alpha\in A$. Jensen's inequality and the last part of assumption~(\ref{A:gammaPoinc}) imply
\begin{align*}
  \lint{E}|f_\alpha(\vx)|^p\gamma_\alpha(\vx)\dd\vx
  &\le
  \lint{E}\flint{\Psi_\alpha(\vx)}|u(\vy)-u(\vx)|^p
    \gamma_\alpha(\vx)\dd\vy\dd\vx\\
  &\le
  \frac{M_{E,\alpha}|\Phi|}
  {\alpha^p\Theta_\alpha^\alpha\|N_\alpha\|_{L^\infty L^1}
    \Lambda_\alpha C_\alpha}
  \lint{E}\flint{\Psi_\alpha(\vx)}|u(\vy)-u(\vx)|^p
    \mu(\vx,\vy-\vx)\dd\vy\dd\vx\\
  &\le
  \frac{M_{E,\alpha}|\Phi|}
  {\alpha^p\Theta_\alpha^\alpha\|N_\alpha\|_{L^\infty L^1}
    \Lambda_\alpha C_\alpha}
  \lint{Y'\cup E}\flint{\Psi_\alpha(\vx)}|u(\vy)-u(\vx)|^p
    \mu(\vx,\vy-\vx)\dd\vy\dd\vx
  <\infty.
\end{align*}
Thus, we may apply the Lebesgue dominated convergence theorem to the upper bound in~\eqref{E:u_lIneq1Poinc2}. Using the monotone convergence theorem for the lower bound, we conclude that
\begin{align}
\nonumber
  \lint{E}|u(\vx)|^p\dd\vx
  &\le
  \lint{E}|u(\vx)|^p\gamma_\alpha(\vx)\dd\vx
  \le
  C_\alpha\lint{E}|f_\alpha(\vx)|^p\gamma_\alpha(\vx)\dd\vx\\
\label{E:uOnUIneq}
  &\le
  C_\alpha\lint{E}
    \flint{\Psi_\alpha(\vx)}|u(\vy)-u(\vx)|^p\gamma_\alpha(\vx)\dd\vy\dd\vx.
\end{align}
The last item in assumption~(\ref{A:gammaPoinc}) implies, for all $\alpha\in A_0$,
\[
    \Lambda_\alpha\alpha^p\Theta_\alpha^\alpha\|N_\alpha\|_{L^\infty L^1}
  \lint{E}|u(\vx)|^p\dd\vx
  \le
  M_{E,\alpha}\lint{E}\lint{\ren}
    |u(\vy)-u(\vx)|^p\mu(\vx,\vy-\vx)\chi_{\Psi_\alpha(\vx)}(\vy)\dd\vy\dd\vx.
\]

Returning to~\eqref{E:IntermPoincIneq1}, we obtain, for each $\alpha\in A$,
\begin{multline*}
  |\Phi'_\alpha|\lint{V'}|u(\vx)|^p\dd\vx
  \le M_V|\Phi|\lint{Y'}\lint{Z'_\alpha(\vx)}
      |u(\vx+\vz)-u(\vx)|^p\mu(\vx,\vz)\dd\vz\dd\vx\\
  +M_{E,\alpha}|\Phi|\lint{E}\lint{\ren}
    |u(\vy)-u(\vx)|^p\mu(\vx,\vy-\vx)\chi_{\Psi_\alpha(\vx)}(\vy)\dd\vy\dd\vx.
\end{multline*}
Recall that $\{Z'_\alpha(\vx)\}_{\alpha=1}^\infty$ is a partition of $Z(\vx)$, for each $\vx\in V$. Summing over $\alpha\in A$ and using the monotone convergence leads to
\begin{multline}\label{E:V'PoincIneq}
  \lint{V'}|u(\vx)|^p\dd\vx
  \le M_V
    \lint{Y'}\lint{Z(\vx)}
      |u(\vx+\vz)-u(\vx)|^p\mu(\vx,\vz)\dd\vz\dd\vx\\
  +\lint{E}\lint{\ren}
    |u(\vy)-u(\vx)|^p\mu(\vx,\vy-\vx)
    \sum_{\alpha\in A}M_{E,\alpha}\chi_{\Psi_\alpha(\vx)}(\vy)\dd\vy\dd\vx.
\end{multline}
Since $u=0$ a.e. in $\bnd$, the same argument used above also provides the inequality
\[
  \lint{\dom'\cap\ov{U}}|u(\vx)|^p\dd\vx
  \le
  \lint{E}|u(\vx)|^p\dd\vx\\
  \le
  M_{E,0}\lint{E}\lint{\ren}
    |u(\vy)-u(\vx)|^p\mu(\vx,\vy-\vx)\chi_{\Psi_0(\vx)}(\vy)\dd\vy\dd\vx.
\]
Adding this to~\eqref{E:V'PoincIneq} yields
\begin{align*}
  \lint{\dom'}|u(\vx)|^p\dd\vx
  &\le
  \lint{V'}|u(\vx)|^p\dd\vx+\lint{E}|u(\vx)|^p\dd\vx\\
  &\le M_V
    \lint{Y'}\lint{Z(\vx)}
      |u(\vx+\vz)-u(\vx)|^p\mu(\vx,\vz)\dd\vz\dd\vx\\
  &\qqquad+\lint{E}\lint{\ren}
    |u(\vy)-u(\vx)|^p\mu(\vx,\vy-\vx)
    \sum_{\alpha\in A_0}M_{E,\alpha}\chi_{\Psi_\alpha(\vx)}(\vy)\dd\vy\dd\vx.
\end{align*}
The proof is complete upon using the definition of $M_U$ and $M$ and changing variables in the inner integral of the last term.
\end{proof}

We now present two corollaries. We continue to use the notation from Theorem~\ref{T:MainPoinc2}. For both corollaries, we assume that $\bnd=\ov{U}$, so it is not necessary to verify assumption~(\ref{A:gammaPoinc}).

For Corollary~\ref{C:PoincCor1}, there are two main simplifying assumptions made. The first is that there is less (volume) compression of the support of the kernel along orbits with large absorption indices. Larger absorption indices are typically associated with parameters $\vzet$ such that $\|\vz(\cdot,\vzet)\|_{\ren}$ is small. Thus $\|\vy(\vx,\vzet)-\vx\|_{\ren}$ will be small, and it is reasonable to expect the distortion in the kernel's support between these two points to also be small. This is essentially the content of the first part of assumption~\ref{A:detBoundsCor1}. The second simplifying assumption, in Corollary~\ref{C:PoincCor1}, is that for each absorbtion index, there is a uniform bound for the Banach indicatrix.
\begin{corollary}\label{C:PoincCor1}
Let $U\subseteq\ren\bs\ov{\dom}$ be an open absorption set for $\dom$, and set $\bnd:=\ov{U}$. Assume the following.
\begin{enumerate}[(i)]
\item\label{A:AbsPoincAssumCor1} The hypotheses in~(\ref{A:yzPoinc}) of Theorem~~\ref{T:MainPoinc2} are satisfied.
\item\label{A:detBoundsCor1} There exists $0\le\lambda,\theta<\infty$ and $\{\Lambda_\alpha\}_{\alpha=1}^\infty\subset(0,\infty)$ such that for each $\alpha\in\nats$ and a.e. $(\vx,\vzet)\in\dom\times\Phi'_\alpha$, we have
\[
  |\det(\partial_{\vx}\vy(\vx,\vzet))|
  \ge
  \lp\frac{\alpha}{\alpha+\lambda}\rp^\theta
  \nd
  |\det\partial_{\vzet}\vz(\vx,\vzet)|\Lambda_\alpha\ge 1.
\]
\item\label{A:muBoundsCor1} There exists $M_0<\infty$ such that for each $\alpha\in\nats$ and for a.e. $\vx\in\dom$,
\begin{equation}\label{A:muBoundCor1}
  \mu(\vx,\vz')M_0
  \ge
  \alpha^p N_\alpha\frac{\Lambda_\alpha}{|\Phi|},
  \quad\text{ for a.e. }\vz'\in\vz(\vx,\Phi'_\alpha),
\end{equation}
with
\[
  N_\alpha
  :=
  \esssup_{(\vx,\vzet)\in\dom\times\Phi'_\alpha}
    N_{\vy^{\alpha-1}(\cdot,\vzet)}(\vx,\dom)<\infty
\]
\end{enumerate}
If $u:\ren\to\re$ is measurable and $u=0$ a.e. in $\bnd$, then
\[
  \lint{\dom}|u(\vx)|^p\dd\vx\le M_0\ee^{\theta\lambda}
    \lint{\dom}\lint{Z(\vx)}
    |u(\vx+\vz)-u(\vx)|^p\mu(\vx,\vz)\dd\vz\dd\vx.
\]
\end{corollary}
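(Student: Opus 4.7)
The plan is to reduce the corollary to Theorem~\ref{T:MainPoinc2} by taking $\dom' = \dom$. Since $U \subseteq \ren\bs\ov\dom$, the theorem's sets become $V = V' = \dom$, and the choice $\bnd = \ov U$ forces $E := \ov U \bs \bnd = \emptyset$. In particular, assumption~(\ref{A:gammaPoinc}) of the theorem is vacuous, the index set $A_0$ reduces to $A$, and by Remark~\ref{R:SpecPoincCases}(c) the integrability condition~\eqref{E:uAssumMainPoinc} and the $L^1(\ren)$ requirement on $u$ are no longer needed: measurability of $u$ together with $u = 0$ a.e.\ on $\bnd$ suffices. Assumption~(\ref{A:AbsPoincAssumCor1}) of the corollary is exactly hypothesis~(\ref{A:yzPoinc}) of the theorem.

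To match the quantitative part of hypothesis~(\ref{A:BoundsPoinc}), I would set
\[
  \Theta_\alpha := \lp\frac{\alpha+\lambda}{\alpha}\rp^\theta,
\]
so that the first inequality of~(\ref{A:BoundsPoinc}) follows immediately from assumption~(\ref{A:detBoundsCor1}) of the corollary. The key quantitative point is that
\[
  \Theta_\alpha^\alpha = \lp 1 + \frac{\lambda}{\alpha}\rp^{\theta\alpha} \le \ee^{\theta\lambda}
  \qquad\text{for every }\alpha \in \nats,
\]
by the standard exponential inequality $(1+t/\alpha)^\alpha \le \ee^t$; hence $\Theta_\alpha^k \le \ee^{\theta\lambda}$ uniformly in $0 \le k \le \alpha$. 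For the measurable dominator of the Banach indicatrix required in~(\ref{A:IndicatrixPoinc}), I would take the constant choice $N_k(\vx,\vz) := N_\alpha$ on $Z'_\alpha(\vx)$ for every $0 \le k \le \alpha-1$. The justification is that if $\vx_1,\vx_2 \in V'$ satisfy $\vy_V^k(\vx_1,\vzet) = \vy_V^k(\vx_2,\vzet)$, then applying $\vy_V$ another $\alpha-1-k$ times gives $\vy_V^{\alpha-1}(\vx_1,\vzet) = \vy_V^{\alpha-1}(\vx_2,\vzet)$, so the multiplicity $N_{\vy_V^k(\cdot,\vzet)}(\cdot,V')$ is pointwise dominated (after applying $\vy_V^{\alpha-1-k}$ to the argument) by $N_{\vy_V^{\alpha-1}(\cdot,\vzet)}(\cdot,V')$, whose essential supremum is $N_\alpha$ by the definition in assumption~(\ref{A:muBoundsCor1}).

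Combining these identifications, the corollary's bound~\eqref{A:muBoundCor1} delivers the second inequality of hypothesis~(\ref{A:BoundsPoinc}) of the theorem with $M_V := M_0 \ee^{\theta\lambda}$. Since $E = \emptyset$, we have $M_E = 0$ and therefore $M = M_V = M_0 \ee^{\theta\lambda}$. Moreover, by its very definition $Y' = V \cap \bigcup_{k=0}^\infty \vy_V^k(V'\times\Phi) \subseteq \dom$, so $Y' \cup E \subseteq \dom$, and $Z(\vx) = \vz(\vx,\Phi)$ everywhere on $\dom$. Applying Theorem~\ref{T:MainPoinc2} then yields exactly
\[
  \lint{\dom} |u(\vx)|^p \dd\vx
  \le M_0 \ee^{\theta\lambda}
    \lint{\dom} \lint{Z(\vx)} |u(\vx+\vz) - u(\vx)|^p \mu(\vx,\vz) \dd\vz \dd\vx.
\]

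The main obstacle is the quantitative choice of $\Theta_\alpha$. A naive uniform lower bound on $|\det\partial_{\vx}\vy|$ would make $\Theta_\alpha^\alpha$ blow up with the absorption index, and since orbits with arbitrarily large $\alpha$ must be accommodated, the product $\alpha^p \Theta_\alpha^\alpha N_\alpha \Lambda_\alpha/|\Phi|$ in hypothesis~(\ref{A:BoundsPoinc}) would be unbounded. The $\alpha$-dependent refinement in~(\ref{A:detBoundsCor1}), coupled with the exponential inequality, is precisely what tames this factor and absorbs all $\alpha$-dependence into the single constant $\ee^{\theta\lambda}$.
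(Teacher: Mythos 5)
Your proof is correct and follows the same route as the paper's. Both take $\dom'=\dom$ (so $V=V'=\dom$, $E=\emptyset$, assumption~(\ref{A:gammaPoinc}) vacuous), choose $\Theta_\alpha=\lp1+\lambda/\alpha\rp^\theta$ so that $\Theta_\alpha^k\le\Theta_\alpha^\alpha\le\ee^{\theta\lambda}$ uniformly in $k\le\alpha$, absorb this factor into $M_V$, and then apply Theorem~\ref{T:MainPoinc2}. The only difference is cosmetic: where the paper simply asserts that the dominators $\{N_k\}$ form an increasing sequence so that $N_k\le N_{\alpha-1}\le N_\alpha$ for $k\le\alpha-1$, you make this precise by choosing $N_k\equiv N_\alpha$ on $Z'_\alpha(\vx)$ and justifying it through the forward-shift observation that any two points identified by $\vy_V^k(\cdot,\vzet)$ remain identified by $\vy_V^{\alpha-1}(\cdot,\vzet)$, so $N_{\vy_V^k(\cdot,\vzet)}(\vx,V')\le N_{\vy_V^{\alpha-1}(\cdot,\vzet)}(\vy_V^{\alpha-1-k}(\vx,\vzet),V')$. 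This is a somewhat cleaner justification than the paper's. One point worth noting (present in the paper as well): the ess sup defining $N_\alpha$ in~\eqref{A:muBoundCor1} is taken over $\dom\times\Phi'_\alpha$, while the shifted point $\vy_V^{\alpha-1-k}(\vx,\vzet)$ could in principle already lie outside $\dom$ (in the absorbing set) for some $\vx\in\dom\cap Y'_{k,\alpha}$; a fully rigorous reading should either take the ess sup over $(\dom\cup U)\times\Phi'_\alpha$ or argue that the orbit of any point contributing to $\dom\cap Y'_k$ remains in $\dom$ through step $\alpha-1$. This is a shared technicality, not a defect specific to your argument.
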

\begin{proof}
As indicated in Remark~\ref{R:SpecPoincCases}, to invoke Theorem~\ref{T:MainPoinc2}, it is unnecessary to verify assumption~(\ref{A:gammaPoinc}). The proof simply requires checking assumption~(\ref{A:IndicatrixPoinc}) of Poincar\'{e} Inequality II and identifying $\{\Theta_\alpha\}_{\alpha=1}^\infty$ and $M_V<\infty$ that ensures hypothesis~(\ref{A:BoundsPoinc}) of Theorem~\ref{T:MainPoinc2} is satisfied. We observe that $\{N_k\}_{k=0}^\infty$ is an increasing sequence of functions, so for each $\alpha\in\nats$ and a.e. $(\vx,\vzet)\in\dom\times\Phi'_\alpha$,
\[
  N_k(\vx,\vz(\vx,\vzet))\le N_{\alpha-1}(\vx,\vz(\vx,\vzet))\le N_\alpha.
  \frl k=0,\dots,\alpha-1.
\]
For each $\alpha\in\nats$, put
\[
  \Theta_\alpha:=\lp\frac{\alpha}{\alpha+\lambda}\rp^{-\theta}
  =\lp1+\frac{\lambda}{\alpha}\rp^\theta.
\]
Thus, for each $k=0,\dots,\alpha-1$,
\[
  \Theta_\alpha^k
  \le
  \Theta_\alpha^\alpha=\lp1+\frac{\lambda}{\alpha}\rp^{\alpha\theta}
  \le\ee^{\theta\lambda}.
\]
Hence, by assumption~(\ref{A:muBoundsCor1}), for each $\alpha\in\nats$ and $k=0,\dots,\alpha-1$, we have
\[
  \mu(\vx,\vz(\vx,\vzet'))M_0\ee^{\theta\lambda}
  \ge
  \alpha^pN_k(\vx,\vz(\vx,\vzet'))\Theta_\alpha^k
  \frac{\Lambda_\alpha}{|\Phi|},
\]
for a.e. $\vx\in\dom\cap Y'_{k,\alpha}$ and a.e. $\vzet\in\Phi'_\alpha$. The result follows from Theorem~\ref{T:MainPoinc2}.
\end{proof}

For the second corollary, we assume that there exists $\alpha_0\in\nats$ such that $\Phi'_\alpha=\emptyset$, for all $\alpha\ge\alpha_0$. Thus, the lengths of the paths to the absorption set are uniformly bounded by $\alpha_0$. In this case, it is reasonable to expect uniform bounds for the other parameters in assumptions~(\ref{A:IndicatrixPoinc}) and~(\ref{A:BoundsPoinc}) of Theorem~\ref{T:MainPoinc2}.
\begin{corollary}\label{C:PoincCor2}
Let $U\subseteq\ren\bs\ov{\dom}$ be an open absorption set for $\dom$, and set $\bnd:=\ov{U}$. Assume the following.
\begin{enumerate}[(i)]
  \item The hypotheses in~(\ref{A:yzPoinc}) of Theorem~~\ref{T:MainPoinc2} are satisfied.
  \item There exists $\alpha_0\in\nats$ such that the absorption index for a.e. $\vx\in\dom$ is at most $\alpha_0$.
  \item There exists $\Lambda_0,M_0,N_0,\Theta_0<\infty$ such that
\begin{itemize}
  \item[$\bullet$] $\esssup_{(\vx,\vzet)\in\dom\times\Phi}
    N_{\vy^{\alpha_0-1}(\cdot,\vzet)}(\vx,\dom)\le N_0$;
  \item[$\bullet$] for a.e. $(\vx,\vzet)\in\dom\times\Phi$
\[
  |\det(\partial_{\vx}\vy(\vx,\vzet))|\Theta_0\ge 1
  \nd
  |\det\partial_{\vzet}\vz(\vx,\vzet)|\Lambda_0\ge 1;
\]
  \item[$\bullet$] for a.e. $\vx\in\dom$
\[
  \mu(\vx,\vz')M_0
  \ge
  \frac{\Lambda_0}{|\Phi|},\quad\text{ for a.e. }\vz'\in Z(\vx).
\]
\end{itemize}
\end{enumerate}
If $u:\ren\to\re$ is measurable and $u=0$ a.e. in $\bnd$, then
\[
  \lint{\dom}|u(\vx)|^p\dd\vx
  \le
  \alpha_0^pM_0N_0\Theta_0^{\alpha_0}
  \lint{\dom}\lint{Z(\vx)}
    |u(\vx+\vz)-u(\vx)|^p\mu(\vx,\vz)\dd\vz\dd\vx.
\]
\end{corollary}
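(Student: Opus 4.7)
The plan is to apply Theorem~\ref{T:MainPoinc2} with $\dom' := \dom$ and verify each of its hypotheses from the simplified assumptions of the corollary. Since $U \subseteq \ren \bs \ov{\dom}$ and $\bnd = \ov{U}$, one has $V = \dom \bs \ov{U} = \dom = V'$ and $E = \ov{U} \bs \bnd = \emptyset$. Therefore assumption~(\ref{A:gammaPoinc}) of Theorem~\ref{T:MainPoinc2} is vacuous (formally, set $A_0 = A$ and $M_{E,\alpha}=0$), so $M = M_V$, and by Remark~\ref{R:SpecPoincCases} only measurability of $u$ is required.

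Assumption~(\ref{A:yzPoinc}) is granted by hypothesis. For assumption~(\ref{A:IndicatrixPoinc}), I would set $N_k(\vx, \vz) \equiv N_0$ for every $k \in \whls$. For $k \le \alpha_0 - 1$, the iteration identity $\vy_V^{\alpha_0 - 1} = \vy_V^{\alpha_0 - 1 - k} \circ \vy_V^k$ yields, for a.e.\ $\vzet \in \Phi$,
\[
  \esssup_{\vx \in \ren} N_{\vy_V^k(\cdot,\vzet)}(\vx, V')
  \le \esssup_{\vx' \in \ren} N_{\vy_V^{\alpha_0-1}(\cdot,\vzet)}(\vx', V')
  \le N_0,
\]
and for $k \ge \alpha_0$ the orbit has been absorbed a.e., so $\vy_V^k(\cdot,\vzet) = \vy_V^{\alpha_0}(\cdot,\vzet)$ a.e.\ and the same bound persists.

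Assumption~(\ref{A:BoundsPoinc}) is the crux. The uniform absorption hypothesis forces $A \subseteq \{1, \dots, \alpha_0\}$. For each $\alpha \in A$ set $\Theta_\alpha := \Theta_0$ and $\Lambda_\alpha := \Lambda_0$; the Jacobian bounds are then immediate. The remaining kernel inequality
\[
  \mu(\vx, \vz(\vx, \vzet))\, M_V
  \ge \alpha^p\, \Theta_0^k\, N_0\, \Lambda_0 / |\Phi|,
  \qquad k = 0, \dots, \alpha - 1,
\]
is enforced by the choice $M_V := \alpha_0^p\, \Theta_0^{\alpha_0}\, N_0\, M_0$, by combining the standing lower bound $\mu\, M_0 \ge \Lambda_0/|\Phi|$ with $\alpha \le \alpha_0$ and $\Theta_0^k \le \Theta_0^{\alpha_0}$ (one may assume $\Theta_0 \ge 1$ without loss; otherwise the $\Theta_0^{\alpha_0}$ factor may simply be replaced by $1$).

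The main obstacle is bookkeeping rather than analysis: ensuring that a \emph{single} choice of $M_V$ dominates the required bound uniformly over $\alpha \in A$ and $k \in \{0, \dots, \alpha - 1\}$. The finite absorption hypothesis is precisely what prevents blow-up of the geometric factor $\Theta_0^k$ and reduces it to the constant $\Theta_0^{\alpha_0}$ appearing in the conclusion. Since $Y' \cup E = Y' \subseteq \dom$, Theorem~\ref{T:MainPoinc2} applied with the above choices yields the asserted Poincar\'{e} inequality with constant $M = \alpha_0^p\, M_0\, N_0\, \Theta_0^{\alpha_0}$.
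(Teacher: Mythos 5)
Your proof is correct and carries out precisely the verification the paper leaves implicit; the paper's own proof is the single sentence that the corollary is a straightforward application of Theorem~\ref{T:MainPoinc2}. Your parenthetical about $\Theta_0\ge 1$ is a genuine observation: for $\Theta_0<1$ the maximum of $\Theta_0^k$ over $k\in\{0,\dots,\alpha_0-1\}$ is $1$, so the achievable constant would be $\alpha_0^pM_0N_0$ rather than the smaller $\alpha_0^pM_0N_0\Theta_0^{\alpha_0}$ — but since $|\det\partial_{\vx}\vy|\,\Theta_0\ge 1$ remains valid upon enlarging $\Theta_0$ to $\max\{\Theta_0,1\}$, the stated constant is safe to read under that normalization, as you note.
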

\begin{proof}
The proof is a straightforward application of Theorem~\ref{T:MainPoinc2}.
\end{proof}
\begin{remark}\label{R:CorRem}
Clearly the Poincar\'{e} inequalities in Corollaries~\ref{C:PoincCor1} and~\ref{C:PoincCor2} may be refined so that $U$ absorbs $\dom'\subset\dom$.
\end{remark}

\section{General Examples}\label{S:GenExamples}

We use $\tI^n\in\re^{n\times n}$ to denote the identity matrix.

The first example is a generalization of Example~\ref{Ex:BasicEx2} given earlier.
\begin{example}\label{Ex:GFlow}
Let $\te{F}\in\cnt(\ov{\dom};\re^{n\times n})$ be a Lipschitz function, with Lipschitz constant $K<\infty$. Define $\vz\in\cnt(\dom\times\Phi;\ren)$ by $\vz(\vx,\vzet):=\te{F}(\vx)\vzet$, so
\[
  \vy(\vx,\vzet)=\lc\begin{array}{ll}
    \vx+\te{F}(\vx)\vzet, & \vx\in\dom,\\
    \vx, & \vx\notin\dom.
  \end{array}\right.
\]
Let $\{\bbD_\vzet\}_{\vzet\in\Phi}$ denote the parameterized dynamical system generated by $\{\vy(\cdot,\vzet)\}_{\vzet\in\Phi}$. Assume the following.
\begin{enumerate}[(i)]
\item There exists open set $U\subseteq\ren\bs\dom$ that absorbs $\dom$ a.e. in $\{\bbD_\vzet\}$.
\item\label{A:GInjectivity} There exists an $R<\infty$ such that $KR\le1$ and $\Phi\subseteq\bll_R(\ve{0})$, and $\ve{0}\notin\Phi$.
\item\label{A:FlowAbsIndexBounds} There exists an $R\le S<\infty$ such that, for each $\alpha\in\nats$ and $\vzet\in\Phi$,
\[
  \vzet\in\bigcup_{k=\alpha}^\infty\Phi'_\alpha
  \Longrightarrow\vzet\in\Phi\cap\ov{\bll_{S/\alpha}}(\ve{0}).
\]
\item\label{A:FlowDetBounds} There exists $0\le\theta,\tau,\Lambda_0<\infty$ such that for all $(\vx,\vzet)\in\dom\times\Phi$,
\[
  \left|\det\lp\tI^n+\partial_{\vx}[\te{F}(\vx)\vzet]\rp\right|
  \ge
  \lp\frac{1}{1+\tau\|\vzet\|_{\ren}}\rp^\theta
  \nd
  |\det\te{F}(\vx)|\Lambda_0\ge 1.
\]
\item\label{A:FlowmuBounds} There exists $C<\infty$ such that
\[
  \mu(\vx,\vz)C
  \ge
  \frac{1}{\|\vz\|^p_{\ren}}\frac{\Lambda_0|Z(\vx)|}{|\Phi|}
  \chi_{\dom\times\Phi}(\vx,\vz).
\]
\end{enumerate}
If $u:\ren\to\re$ is measurable and $u=0$ a.e. in $\bnd:=\ov{U}$, then
\[
  \lint{\dom}|u(\vx)|^p\dd\vx
  \le CS^p\ee^{\theta S\tau}\lint{\dom}
  \flint{Z(\vx)}|u(\vx+\vz)-u(\vx)|^p\mu(\vx,\vz)\dd\vz\dd\vx,
\]
\end{example}
\begin{remark}\label{R:FlowRem}
\begin{enumerate}[(a)]
\item We present a condition implying the inequalities in~(\ref{A:FlowDetBounds}) of Example~\ref{Ex:GFlow}. Given $\te{B}\in\re^{n\times n}$, define $\|\te{B}\|_{\max}:=\max_{i,j\in\{1,\dots,n\}}|B_{(i,j)}|$. The following is established in~\cite{BreOsb:15a}: for any $\te{B}\in\re^{n\times n}$,
\[
  n\|\te{B}\|_{\max}\le1
  \Longrightarrow
  \det(\tI^n+\te{B})\ge1-n\|\te{B}\|_{\max}.
\]
Suppose that $\te{F}$ satisfies the seconnd inequality in assumption~(\ref{A:FlowDetBounds}) and that $nKR<1$, where $K$ is the Lipschitz constant for $\te{F}$ and $R$ is identified in assumption~(\ref{A:GInjectivity}). Then
\[
  n\|\partial_{\vx}[\te{F}(\vx)\vzet]\|_{\max}
  \le
  n\|\partial_{\vx}\te{F}(\vx)\|_{\max}\|\vzet\|_{\ren}
  \le
  nKR<1.
\]
Put $\tau:=nK$. It follows that
\begin{align*}
  \det(\tI^n+\partial_{\vx}[\te{F}(\vx)\vzet])
  &\ge
  \frac{1-nK\|\vzet\|_{\ren}}{1+\tau\|\vzet\|_{\ren}}
    (1+\tau\|\vzet\|_{\ren})\\
  &=
  \frac{1-nK\|\vzet\|_{\ren}}{1+\tau\|\vzet\|_{\ren}}
    \lp1+\frac{nK\|\vzet\|_{\ren}}{1-nK\|\vzet\|_{\ren}}\rp
  >\frac{1}{1+\tau\|\vzet\|_{\ren}}.
\end{align*}
\item\label{R:Hadamard} We note that by Hadamard's inequality, given any $\vx_0\in\dom$
\begin{multline*}
  \sup_{\vx\in\dom}|\det\te{F}(\vx)|
  \le
  n^\frac{n}{2}\sup_{\vx\in\dom}\|\te{F}(\vx)\|^n_{\re^{n\times n}}
  \le
  n^\frac{n}{2}\ls K\diam(\dom)+\|\te{F}(\vx_0)\|^n_{\re^{n\times n}}\rs\\
  \Longrightarrow
  |Z(\vx)|
  \le
  n^\frac{n}{2}\ls K\diam(\dom)+\|\te{F}(\vx_0)\|^n_{\re^{n\times n}}\rs|\Phi|,
  \frl\vx\in\dom.
\end{multline*}
\item\label{R:pFlowRem} If $1\le p<n$, the lower bound in assumption~(\ref{A:FlowmuBounds}) is integrable, for each $\vx\in\dom$, if $p<n$. For $p\ge n$, the lower bound remains integrable, at each $\vx\in\dom$, if there exists $r>0$ such that $\Phi\cap\bll_r(\ve{0})=\emptyset$.
\item We also note that this example can be further generalized by considering countably Lipschitz maps $\te{F}$.
\end{enumerate}
\end{remark}
\begin{proof}
We will verify the hypotheses of Corollary~\ref{C:PoincCor1}. The Lipschitz requirement is clearly satisfied. By assumption~(\ref{A:FlowDetBounds}), the map $\vzet\mapsto\vz(\vx,\vzet)$ is injective on $\Phi$ for each $\vx\in\dom$. Given $\vzet\in\Phi$, assumption~(\ref{A:GInjectivity}) implies, for each $\vx_1,\vx_2\in\dom$,
\begin{align*}
  \|\vy(\vx_1,\vzet)-\vy(\vx_2,\vzet)\|_{\ren}
  &\ge
  \|\vx_1-\vx_2\|_{\ren}
    -\|\te{F}(\vx_1)-\te{F}(\vx_2)\|_{\re^{n\times n}}\|\vzet\|_{\ren}\\
  &\ge
  (1-K\|\vzet\|_{\ren})\|\vx_1-\vx_2\|_{\ren}\\
  &>
  (1-KR)\|\vx_1-\vx_2\|_{\ren}\ge0,
\end{align*}
whenever $\vx_1\neq\vx_2$. Hence $\vx\mapsto\vy(\vx,\vzet)$ is injective on $\dom$, for each $\vzet\in\Phi$. The Lusin requirements follow from the Lipschitz continuity properties and assumption~(\ref{A:FlowDetBounds}). Thus hypothesis~(\ref{A:AbsPoincAssumCor1}) of Corollary~\ref{C:PoincCor1} is satisfied. By assumption~(\ref{A:FlowAbsIndexBounds}) and~(\ref{A:FlowDetBounds}), given $\alpha\in\nats$,
\[
  \vzet\in\Phi'_\alpha
  \Longrightarrow
  \|\vzet\|\le\frac{S}{\alpha}
  \Longrightarrow
  |\det(\tI^n+\partial_{\vx}[\te{F}(\vx)\vzet])|
  \ge\lp\frac{\alpha}{\alpha+S\tau}\rp^\theta.
\]
Hence, assumption~(\ref{A:detBoundsCor1}) of Corollary~\ref{C:PoincCor1} is satisfied with $\lambda=S\tau$. Since $\vy$ is injective, we find $N_\alpha'\in\{0,1\}$, as defined in assumption~(\ref{A:muBoundsCor1}) of Corollary~\ref{C:PoincCor1}. For each $\alpha\in\nats$, we saw that $\alpha<S\|\vzet\|^{-1}$. Thus assumption~(\ref{A:FlowmuBounds}) and Corollary~\ref{C:PoincCor1} yields
\[
  \lint{\dom}|u(\vx)|^p\dd\vx
  \le CS^p\ee^{\theta S\tau}\lint{\dom}\frac{1}{|Z(\vx)|}
  \lint{Z(\vx)}|u(\vx+\vz)-u(\vx)|^p\mu(\vx,\vz)\dd\vz\dd\vx,
\]
and the result follows.
\end{proof}

In the following example, the domain is split into two regions. The two regions are ``connected'' by an interface strip, which in turn is ``connected'' to the absorption set $U$.
\begin{example}\label{Ex:Discontinuous}
Consider $\dom=(0,2)^2$, and set $\dom_i:=(i-1,i)\times(0,2)$, $i\in I=\{1,2\}$. Given $0<\delta_1<1$, put $U:=U_1\cup U_2$ and $\bnd:=\ov{U}$, with
\[
  U_1:=(1-\delta_1,1+\delta_1)\times(-1,0),
  \quad
  U_2:=(1-\delta_1,1+\delta_1)\times(2,3).
\]
Let $0<\theta\le\frac{\pi}{4}$ and $0<\delta_2<\delta_1\sin\theta$ be given, and set
\[
  Z_1:=
  \{\vz=(z_1,z_2)\in\set{C}_\theta(\ve{0};\ve{e}_1):z_1<\delta_1
    \text{ and }|z_2|>\delta_2\}
  \nd
  Z_2:=-Z_1.
\]
Define $Z,\Psi:\dom\twoheadrightarrow\ren$ by
\[
  Z(\vx):=\lc\begin{array}{ll}
    Z_1, & \vx\in\dom_1,\\
    Z_2, & \vx\in\dom_2
  \end{array}\right.
  \nd
  \Psi(\vx):=(\dom\cup U)\cap(\vx+Z(\vx)).
\]
Suppose there exists a $C<\infty$ such that
\[
  \mu(\vx,\vy-\vx)C
  \ge
  \frac{1}{|y_2-x_2|^{p+1}|Z_1|}\chi_{\dom}(\vx)\chi_{\Psi(\vx)}(\vy),
  \quad\text{for a.e. }(\vx,\vy)\in\ren\times\ren.
\]
If $u(\vx)=0$ for a.e. $\vx\in U$, then
\[
  \lint{\dom}|u(\vx)|^p\dd\vx
  \le
  2^{p+4}
  C\lint{\dom}\lint{\Psi(\vx)}|u(\vy)-u(\vx)|^p\mu(\vx,\vy-\vx)\dd\vy\dd\vx.
\]
\end{example}
\begin{remark}
\begin{enumerate}[(a)]
\item We note that $|Z_1|=\frac{(\delta_1\sin\theta-\delta_2)^2}{\sin\theta}$.
\item The example can be rescaled and repeated periodically to produce a Poincar\'{e} inequality for a laminate like structure.
\end{enumerate}
\end{remark}
\begin{proof}
We produce the Poincar\'{e} inequality by applying Corollary~\ref{C:PoincCor1} on members of an open partition of $\dom$ into four subsets. We provide the details for one of the members, the others being handled in a similar manner. Set $\dom':=(0,1)\times(0,1)$. (The other members of the partition is $(1,2)\times(0,1)$, $(0,1)\times(1,2)$, and $(1,2)\times(1,2)$.) We define
\[
  \Phi:=\{\vzet=(\zeta_1,\zeta_2)\in Z_1:\zeta_2>\delta_2\},
\]
the correspondences $S_1,S_2,S:\Phi\twoheadrightarrow\ren$ by
\[
  S_1(\vzet):=(1-\zeta_1,1)\times(0,3),\quad
  S_2(\vzet):=(1,1+\zeta_1)\times(0,3),\nd
  S(\vzet):=S_1(\vzet)\cup S_2(\vzet)
\]
and the piecewise Lipschitz map $\vz:\ren\times\Phi\to\ren$ by
\[
  \vz(\vx,\vzet):=\lc\begin{array}{ll}
    (\zeta_1,\zeta_2), & \vx\in\dom_1,\\
    (-\zeta_1,\zeta_2), & \vx\in\dom_2,\\
    \ve{0}, & \vx\notin\dom.
  \end{array}\right.
\]
We see that $S(\vzet)\bs\ov{\dom}=U_2$, for all $\vzet\in\Phi$. The map $\vy:\ren\times\Phi\to\ren$ satisfies
\[
  \vy(\vx,\vzet):=\vx+\vz(\vx,\vzet)
  \in
  \lc\begin{array}{ll}
    \dom\bs S(\vzet), & 0<x_1\le1-2\zeta_1
      \text{ and }0<x_2<2-\zeta_2,\\
    \dom\cap S(\vzet), & 1-2\zeta_1<x_1<1+\zeta_1
      \text{ and }0<x_2<2-\zeta_2,\\
    U_2, & \vx\in S(\vzet)\cap\dom\text{ and }2-\zeta_2<x_2<2.\\
  \end{array}\right.
\]
Moreover, since $\theta\le\frac{\pi}{4}$, we find $0<\zeta_2<\zeta_1$ and thus $x_2\le 2-\zeta_1<2-\zeta_2$, for each $\vzet\in\Phi$, $\vx\in\vy^k(\dom'\times\{\vzet\})\bs S(\vzet)$, and $k\in\whls$. We conclude that $\vy^k(\dom'\times\{\vzet\})\subset\dom_1\cup S(\vzet)$, and if $\vx\in\dom\cap\vy^k(\dom'\times\{\vzet\})$, then $\vx+\vz(\vx,\vzet)\in\Psi(\vx)$. Given $\vx=(x_1,x_2)\in\dom$ and $\vzet\in\Phi$, we find that
\[
  \vy^{-1}_{\vzet}(\{\vx\})=\lc\begin{array}{ll}
    \{(x_1-\zeta_1,x_2-\zeta_2),(x_1+\zeta_1,x_2-\zeta_2)\},
      & \vx\in S_1(\vzet)\text{ and }x_2>\zeta_2,\\
    \{(x_1-\zeta_1,x_2-\zeta_2)\},
      & \vx\in S_2(\vzet)\text{ and }x_2>\zeta_2,\\
    \{(x_1-\zeta_1,x_2-\zeta_2)\},
      & \vx\in\dom_1\bs S_1(\vzet),\\
    \emptyset,
      & 0<x_1\le\zeta_1\text{ or }0<x_2\le\zeta_2.
  \end{array}\right.
\]
Hence, if $\vx\in\dom_1\bs S_1(\vzet)$, then $N_{\vy^k_{\vzet}}(\vx,\dom')\in\{0,1\}$, for all $k\in\whls$. We deduce that
\begin{align*}
  \vx\notin S_1
  & \Longrightarrow N_{\vy^{k+1}_\vzet}(\vx,\dom')\le N_{\vy^k_{\vzet}}(\vx,\dom')\\ \vx\in S_1
  & \Longrightarrow
  N_{\vy^{k+1}_{\vzet}}(\vx,\dom')=1+N_{\vy^k_{\vzet}}(\vx,\dom').
\end{align*}
It follows that $N_{\vy^k_{\vzet}}(\vx,\dom')\le k+1$. Clearly, given $\vzet\in\Phi$, the absorption index $\alpha$ for $\dom'$ satisfies $\alpha\le 2/\zeta_2$. This also implies
\[
  N'_\alpha:=\esssup_{(\vx,\vzet)\in\dom\times\Phi'_\alpha}
    N_{\vy^{\alpha-1}_{\vzet}}(\vx,\dom')\le\alpha\le\frac{2}{\zeta_2}.
\]
Set $Y':=\dom\cap\bigcup_{k=0}^\infty\vy^k(\dom'\times\Phi)$. We may invoke Corollary~\ref{C:PoincCor1} and Remark~\ref{R:CorRem}, with $|\Phi|=|Z_1|/2$, $\lambda,\theta=0$ and $\Lambda_\alpha=1$, for each $\alpha\in\nats$, to conclude that
\begin{align*}
  \lint{\dom'}|u(\vx)|^p\dd\vx
  &\le
  2^{p+2}
  C\lint{Y'}\lint{\Phi}|u(\vx+\vz)-u(\vx)|^p\mu(\vx,\vz)\dd\vz\dd\vx\\
  &\le
  2^{p+2}
  C\lint{\dom}\lint{\Psi(\vx)}|u(\vy)-u(\vx)|^p\mu(\vx,\vy-\vx)\dd\vy\dd\vx.
\end{align*}
Adapting the above argument to the remaining partition members yields the result.
\end{proof}

The following is an extension of Example~\ref{Ex:BasicEx1}. Here we assume that the support of the kernel is a measurable set that can vary throughout $\dom$. As in Example~\ref{Ex:BasicEx1}, we use $\eta$ to denote the nonzero coordinate for a hemisphere of $\bll_1(\ve{0})$.
\begin{example}\label{Ex:BasicEx1Ext}
Let $0<\delta\le 1$. Suppose that $\Psi:\dom\twoheadrightarrow\ren$ satisfies the following assumptions:
\begin{itemize}
\item[$\bullet$] $\Psi$ and $\Psi^{-1}$ are measurable valued and $[\vx\mapsto|\Psi(\vx)|]\in L^+(\dom)$;
\item[$\bullet$] For each $\vx\in\dom$,
\[
  \Psi(\vx)\subseteq\bll_\delta(\vx)\cap\set{C}_{\frac{\pi}{2}}(\vx;\ve{e}_1);
\]
\item[$\bullet$] There exists a $1-\frac{\eta\delta}{1+\diam(\dom)}<\tau\le 1$ such that $|\Psi(\vx)|\ge\frac{\tau}{2}|\bll_\delta|$, for all $\vx\in\dom$.
\end{itemize}
Put
\[
  \nu_\tau:=\frac{1}{\tau}\ls1-\frac{\eta\delta}{1+\diam(\dom)}\rs,
\]
and set $\bnd:=\ov{\bigcup_{\vx\in\dom}\Psi(\vx)\bs\dom}$. If $u\in L^1(\ren)$ satisfies $u=0$ a.e. on $\bnd$, then
\[
  \lint{\dom}|u(\vx)|^p\dd\vx
  \le
  C_\tau\lp1+\diam(\dom)\rp\lint{\dom}
    \left|\flint{\Psi(\vx)}[u(\vy)-u(\vx)]\dd\vy\right|^p\dd\vx.
\]
Here $C_\tau=C(\nu_\tau,p)$ is the constant defined in~\eqref{D:ControlLemmaBaseConst} of Lemma~\ref{L:ControlLemmaBase}.
\end{example}
\begin{proof}
Without loss of generality, we assume that $\inf_{\vx\in\dom}\vx\bdot\ve{e}_1=0$. We define $\rho\in L^\infty(\dom\times\ren)$ by $\rho(\vx,\vz):=|\Psi(\vx)|^{-1}\chi_{\Psi(\vx)}(\vz)$. As defined in Theorem~\ref{T:MainPoinc1}, we compute $R(\vx)=|\Psi(\vx)|^{-1}\le\frac{\tau}{2|\bll_\delta|}$. For each $\vx\in\ren$, set $\bll_\delta^-(\vx):=\bll_\delta(\vx)\cap\set{C}_\frac{\pi}{2}(\vx;-\ve{e}_1)$. Although we cannot explicitly identify $\Psi^{-1}$, we do have
\[
  \Psi^{-1}(\vy)\subseteq\dom\cap\bll^-_\delta(\vy)
  \frl\vy\in\dom.
\]
The hypothesis on $\tau$ implies $\nu_\tau<1$. As in Example~\ref{Ex:BasicEx1}, we define $\gamma\in\cnt^\infty(\ren)$ by $\gamma(\vx)=1+\vx\bdot\ve{e}_1$. Given $\vy\in\dom$, we find $\gamma(\vy)\ge 0$, for all $\vx\in\bll_\delta^-(\vy)$, since $\delta\le 1$.  We deduce that, for each or all $\vy\in\dom$,
\[
  \lint{\Psi^{-1}(\vy)}\frac{\gamma(\vx)}{|\Psi(\vx)|}\dd\vx
  \le
  \frac{1}{\tau}\flint{\bll_\delta^-(\vy)}\gamma(\vx)\dd\vx
  =
  \frac{1}{\tau}\lp1+\vy\bdot\ve{e}_1-\gamma_0\eta\delta\rp
  \le\nu_\tau\gamma(\vy).s
\]
By assumption $|\Psi|$ is uniformly positive in $\dom$. It follows that~(BC) is satisfied (see Remark~\ref{R:SpecPoincCases}). Poincar\'{e} Inequality I yields the result.
\end{proof}

For Examples~\ref{Ex:SignChange1} and~\ref{Ex:SignChange2}, we assume $1<p<\infty$ and put $q:=\frac{p}{p-1}$. The examples are analogues of Example~\ref{Ex:BasicEx1} with a sign-changing kernel that has symmetric support $\ov{\bll}_\delta$. In Example~\ref{Ex:SignChange1} the support may be relatively small, when compared to $\diam(\dom)$; in Example~\ref{Ex:SignChange2} it can be relatively large.
\begin{example}\label{Ex:SignChange1}
Let $\alpha<\frac{n}{q}$ and $0<\delta\le\frac{1}{2}\diam(\dom)$ be given. Put
\begin{equation}\label{E:R_0Example}
  R_0
  :=
  \lp\frac{n-\alpha}{n}\rp\lp\frac{n-\alpha}{n-q\alpha}\rp^{p-1}.
\end{equation}
Suppose that $\diam(\dom)\ge\frac{2}{\sqrt{3}}$. Let $\sigma:\dom\times\sph^n_1\to\{-1,1\}$ be a measurable function such that, for some $0<\tau\le 1$,
\[
  \lint{\sph^n_1}\sigma(\vx,\vom)\dd\haus^{n-1}(\vom)
  =\tau\haus^{n-1}(\sph^n_1)=\tau n|\bll^n_1|,
  \frl\vx\in\dom
\]
Define $\rho_\delta:\dom\times\ren\to\re$ by
\begin{equation}\label{E:rhoExample}
  \rho_\delta(\vx,\vz):=\lc\begin{array}{ll}
  \ds{\frac{\delta^\alpha}{\tau|\bll_\delta|}\lp\frac{n-\alpha}{n}\rp
    \|\vz\|^{-\alpha}\sigma(\vx,\smfrac{\vz}{|\vz|})},
  & \vz\in\bll_\delta\bs\{\ve{0}\},\\
  0, & \vz\notin\bll_\delta\bs\{\ve{0}\}.
  \end{array}\right.
\end{equation}
Suppose that
\begin{equation}\label{E:Example1Constraint}
  \nu_\delta:=\frac{R_0}{\tau}
  \ls1-\frac{n}{n+2}\lp\frac{\delta}{\diam(\dom)}\rp^2\rs<1.
\end{equation}
If $u\in L^1(\ren)$ satisfies $u=0$ for a.e. on $\Gamma_\delta:=\ann_{[0,\delta]}\bs\dom$, then
\begin{equation}\label{E:Example1Poinc}
  \lint{\dom}|u(\vx)|^p\dd\vx
  \le
  C_\delta\diam(\dom)^2\lint{\dom}
    \left|\lint{\bll_\delta}[u(\vx+\vz)-u(\vx)]
    \rho_\delta(\vx,\vz)\dd\vz\right|^p\dd\vx.
\end{equation}
Here $C_\delta=C(\nu_\delta,p)$ is the constant defined in~\eqref{D:ControlLemmaBaseConst} of Lemma~\ref{L:ControlLemmaBase}.
\end{example}
\begin{remark}
\begin{enumerate}[(a)]
\item The requirement in~\eqref{E:Example1Constraint} restricts that range of admissible values for $\alpha$ and $\tau$. Since $\delta\le\frac{1}{2}\diam(\dom)$, we must have $\frac{R_0}{\tau}<\frac{4(n+2)}{3n+8}$. For $\delta>0$ small, we must have both $\alpha>0$ and $1-\tau>0$ small as well.
\item The restrictions on $\diam(\dom)$ and $\delta>0$ can be relaxed by considering, for example, the function $\vx\mapsto\gamma_0-\|\vx-\vx_0\|^2$ in place of the $\gamma$ defined in the proof.
\item If $\alpha=0$ and $\sigma\equiv 1$, then $R_0=\tau=1$. Using Remark~\ref{R:OptControlConst}, if $p=2$, then we find that $C_\delta\le4\lp\frac{n+2}{n}\rp^2\lp\frac{\diam(\dom)}{\delta}\rp^4$, so~\eqref{E:Example1Poinc} implies
\[
  \lint{\dom}|u(\vx)|^2\dd\vx
  \le
  4\diam(\dom)^6\lp\frac{n+2}{n}\rp^2\lint{\dom}
    \left|\flint{\bll_\delta}\frac{u(\vx+\vz)-u(\vx)}{\delta^2}\dd\vz
    \right|^2\dd\vx.
\]
\end{enumerate}
\end{remark}
\begin{proof}
Define $\Psi:\dom\twoheadrightarrow\ren$ by $\Psi(\vx):=\bll_\delta(\vx)$, so $\Psi^{-1}(\vx)=\bll_\delta(\vx)\cap\dom$. We see that, for all $\vx\in\dom$
\[
  R(\vx)=\lp\lint{\bll_\delta}|\rho_\delta(\vx,\vz)|^q\dd\vz\rp^{p-1}
  =
  \lp\frac{\delta^\alpha}{\tau|\bll_\delta|}\rp^p\lp\frac{n-\alpha}{n}\rp^{p-1}
    \lp\lint{\bll_\delta}\|\vz\|^{-q\alpha}\dd\vz\rp^{p-1}
  =
  \frac{1}{\tau|\bll_\delta|}R_0,
\]
Let $\vx_0\in\ren$ be the unique point such that $\dom\subseteq\bll_{\frac{1}{2}\diam(\dom)}(\vx_0)$. Define $\gamma\in\cnt^\infty(\ren)$ by
\[
  \gamma(\vx):=\diam(\dom)^2-\|\vx-\vx_0\|^2,
\]
so
\[
  1\le\frac{3}{4}\diam(\dom)^2\le\gamma(\vx)
  \le\diam(\dom)^2,
  \frl\vx\in\dom
\]
We will verify
\begin{equation}\label{E:gamma_deltaVer}
  \frac{R_0}{\tau|\bll_\delta|}\lint{\Psi^{-1}(\vx)}\gamma(\vx')\dd\vx'
  =
  \frac{R_0}{\tau|\bll_\delta|}
    \lint{\bll_\delta(\vx)\cap\dom}\gamma(\vx')\dd\vx'
  \le
  \nu_\delta\gamma(\vx),
  \frl\vx\in\dom.
\end{equation}
Given $\vz\in\ren$, we find $\gamma(\vx+\vz)=\gamma(\vx)-2\lp\vx-\vx_0\rp\bdot\vz-\|\vz\|^2$. Since $0<\delta\le\frac{1}{2}\diam(\dom)$ and $\|\vx-\vx_0\|<\frac{1}{2}\diam(\dom)+\delta$, we see that $\gamma(\vx)\ge 0$, for all $\vx\in\dom_\delta$. Thus
\begin{align*}
  \frac{R_0}{\tau|\bll_\delta|}
    \lint{\bll_\delta(\vx)\cap\dom}\gamma(\vx')\dd\vx'
  &\le
  \frac{R_0}{\tau}\flint{\bll_\delta}\gamma(\vx+\vz)\dd\vz
  =
  \frac{R_0}{\tau}\gamma(\vx)-\frac{2R_0}{\tau}(\vx-\vx_0)
    \bdot\underbrace{\flint{\bll_\delta}\vz\dd\vz}_{=\ve{0}}
  -\frac{R_0}{\tau}\flint{\bll_\delta}\|\vz\|^2\dd\vz\\
  &=
  \gamma(\vx)
    \frac{R_0}{\tau}\ls1-\frac{n}{n+2}\frac{\delta^2}{\gamma(\vx)}\rs
  \le
  \gamma(\vx)
    \frac{R_0}{\tau}\ls1-\frac{n}{n+2}\lp\frac{\delta}{\diam(\dom)}\rp^2\rs
  =\nu_\delta\gamma(\vx).
\end{align*}
Finally, as in the previous example, we have $|\bnd_\delta|>0$ and thus~(BC) is satisfied. Theorem~\ref{T:MainPoinc1} yields~\eqref{E:Example1Poinc}.
\end{proof}

\begin{example}\label{Ex:SignChange2}
We again assume $\alpha<\frac{n}{q}$. Let $R_0$ and $\rho_\delta$ be defined as in~\eqref{E:R_0Example} and~\eqref{E:rhoExample}, respectively. Suppose that
\[
  \nu_\delta:=R_0\lp\frac{|\dom|}{\tau|\bll_\delta|}\rp<1.
\]
If $u\in L^1(\ren)$ satisfies $u=0$ for a.e. $\Gamma_\delta$, then
\begin{equation}\label{E:Example2Poinc}
  \lint{\dom}|u(\vx)|^p\dd\vx
  \le
  C_\delta\lint{\dom}
  \left|\lint{\bll_\delta}[u(\vx+\vz)-u(\vx)]\rho_\delta(\vx,\vz)\dd\vz\right|^p
    \dd\vx.
\end{equation}
\end{example}
\begin{proof}
We see that $R(\vx)=\frac{R_0}{\tau|\bll_\delta|}$ and $\Psi^{-1}(\vx)=\bll_\delta(\vx)\cap\dom\subseteq\dom$, for each $\vx\in\dom$. We put $\gamma\equiv1$ throughout $\ren$. Then
\[
  \frac{R_0}{\tau|\bll_\delta|}\lint{\Psi^{-1}(\vx)}\gamma\dd\vx'
  \le R_0\lp\frac{|\dom|}{\tau|\bll_\delta|}\rp=\nu_\delta\gamma.
\]
The result follows from Theorem~\ref{T:MainPoinc1}.
\end{proof}
\begin{remark}
We observe that $\lim_{\delta\to\infty}\rho_\delta(\vx,\vz)=0$, for all $(\vx,\vz)\in\dom\times\ren$, and $\lim_{\delta\to\infty}C_\delta=1$. Thus
\[
  \lim_{\delta\to\infty}
  \ls\lint{U\cap\bll_\delta}[u(\vx+\vz)-u(\vx)]\rho_\delta(\vx,\vz)\dd\vz\rs
  =-u(\vx),
  \frl\vx\in\dom
\]
If there is a $\delta>0$ such that the upper bound in~\eqref{E:Example2Poinc} is finite, then $u\in L^p(\dom)$ and by the Lebesgue dominated convergence theorem,
\[
  \lim_{\delta\to\infty}C_\delta\lint{\dom}
  \left|\lint{\bll_\delta}[u(\vx+\vz)-u(\vx)]\rho_\delta(\vx,\vz)\dd\vz\right|^p
    \dd\vx
  =\lint{\dom}|u(\vx)|^p\dd\vx.
\]
\end{remark}

The next two examples provide an application of Poincar\'{e} Inequality II in the setting where Dirichlet-like conditions, in the form of~\eqref{E:uAssumMainPoinc}, are imposed on a manifold with co-dimension of at least one. By a $\cnt^2$-manifold we mean a topological manifold with a $\cnt^2$-structure as defined in~\cite{Lee:13a}.
\begin{example}[Constraints $m$-Dimansional $\cnt^2$-Manifolds]\label{Ex:LowDimCompMan}
Let $\bnd\subset\ov{\dom}$ be a $0\le m\le n-1$ dimensional a compact $\cnt^2$-manifold without boundary, and let $\beta>n-m$ be given. There exists $C_\bnd<\infty$ and an open set $U\subseteq\dom$, satisfying $\bnd\subseteq\ov{U}$, such that
\begin{equation}\label{E:ManifoldDirichletPoinc}
  \lint{U}|u(\vx)|^p\dd\vx
  \le C_\bnd\lint{U\bs\bnd}\flint{\bll_{d_\bnd(\vx)}(\ve{0})}
    \frac{|u(\vx+\vz)-u(\vx)|^p}{d_\bnd(\vx)^\beta}\dd\vz\dd\vx,
\end{equation}
for any $u\in L^1(\ren)$ satisfying
\[
  \lint{\dom\bs\bnd}\frac{1}{d_\bnd(\vx)^\beta}
    \lp\flint{\bll_{d_\bnd(\vx)}}|u(\vy)|\dd\vy\rp^p\dd\vx<\infty.
\]
\end{example}
\begin{remark}\label{R:ManExample}
\begin{enumerate}[(a)]
\item The constant $C_\bnd$ can be identified in terms of the upper bound for $|\Psi_0|^{-1}$ in Remark~\ref{R:SmoothMan}(\ref{R:PsiMeasBound}) and depends on $m$, $n$, $\beta$, $p$, and the curvature of $\bnd$ (see~\eqref{E:C'_x0Def}.
\item The set $\bnd$ may include a portion of $\partial\bnd$ or be completely contained within $\bnd$. The example can be furthermore extended to manifolds with boundary by requiring $\beta>n-m+1$ (see Remark~\ref{R:SmoothMan}(\ref{R:ManifoldWithBound})).
\item We can adapt the argument below to uniformly Lipschitz manifolds. Based on Remark~\ref{R:SmoothMan}(\ref{R:LipManifold}), we may establish the following version of~\eqref{E:ManifoldDirichletPoinc}:
\[
  \lint{U}|u(\vx)|^p\dd\vx
  \le C_\bnd\lint{U\bs\bnd}\flint{\dom\cap\bll_{K_0^2\cdot d_\bnd(\vx)}(\vx)}
    \frac{|u(\vy)-u(\vx)|^p}{d_\bnd(\vx)^\beta}\dd\vy\dd\vx,
\]
  where $U\subseteq\dom$ is an open set and $1\le K_0<\infty$ is the Lipschitz constant for $\bnd$. With the introduction of curvature measures, it may be possible to extend the result to a more general class of compact sets and refine the inequality to the one in~\eqref{E:ManifoldDirichletPoinc}.
\end{enumerate}
\end{remark}
\begin{proof}
We will produce $U$ as the union of a finite cover of open sets $\{U_\ell\}_{\ell=1}^{\ell_0}$, where upon a rescaling, we can use Corollary~\ref{C:LowDimSmoothMan} and Poincar\'{e} Inequality II to establish an appropriate Poincar\'{e} inequality for each $\ell=1,\dots,\ell_0$.  We set $G:=\mathbb{H}^m_{(m+1,\dots,n)}$. As discussed in Remark~\ref{R:SmoothMan}(\ref{R:Rescale}), for each $\vx_0\in\bnd$, there exists $1\le K_{\vx_0}<\infty$, an open neighborhood $U_{\vx_0}$ of $\vx_0$ and a bijective $\vg_{\vx_0}\in\cnt^2((-1,1)^n;U_{\vx_0})$ such that
\begin{itemize}
  \item[$\bullet$] $\vg_{\vx_0}((-1,1)^n\cap G)=U_{\vx_0}\cap\bnd$;
  \item[$\bullet$] $\vg^{-1}_{\vx_0}\in\cnt^2(U_{\vx_0};(-1,1)^n)$;
  \item[$\bullet$] $\vg_{\vx_0}(\vP_G(\vv))=\vP_\bnd(\vg_{\vx_0}(\vv))$, for all $\vv\in(-1,1)^n$;
  \item[$\bullet$] $\|\partial_{\vv}\vg_{\vx_0}\|_{\re^{n\times n}}\le K_{\vx_0}$, $\|\partial_{\vx}\vg_{\vx_0}\|_{\re^{n\times n}}\le K_{\vx_0}$, and $\|\partial^2_{\vv}\vg_{\vx_0}\|_{\re^{n\otimes3}}\le K_{\vx_0}$.
\end{itemize}
Since $\partial\dom\cup\bnd$ is compact and $\dom\bs\bnd$ is open, we may assume that $d_\bnd(\vx)\le d_{\partial\dom}(\vx)$, for all $\vx\in U_{\vx_0}$. We see that $\vg_{\vx_0}$ satisfies all but the last part of requirement~(\ref{A:ContDiffg}) in Corollary~\ref{C:LowDimSmoothMan}. We ``blow-up'' around $\vx_0$ to obtain a $\vg$ that possesses the final property. Put $R_0:=2K_{\vx_0}^2\ge 2$ and define $\vg\in\cnt^2((-R_0,R_0);\ren)$ by
\[
  \vg(\vv):=R_0\ls\vg_{\vx_0}(\smfrac{\vv}{R_0})-\vx_0\rs.
\]
Thus $\vg^{-1}(\vx):=R_0\vg_{\vx_0}^{-1}(\frac{\vx+\vx_0}{R_0})$. A straightforward computation shows that for all $\vv\in(-R_0,R_0)$ and all $\vx\in\vg((-R_0,R_0)^n)$
\[
  \partial_{\vv}\vg(\vv)=\partial_{\vv}\vg_{\vx_0}(\smfrac{\vv}{R_0}),
  \;
  \partial_{\vx}\vg^{-1}(\vx)
    =\partial_{\vx}\vg^{-1}_{\vx_0}(\smfrac{\vx+\vx_0}{R_0}),
  \text{ and }
  \partial^2_{\vv}\vg(\vv)
    =\frac{1}{R_0}\partial_{\vv}^2\vg_{\vx_0}(\smfrac{\vv}{R_0}).
\]
Set $\bnd_0:=\vg((-R_0,R_0)^n\cap G)$. It is clear that $\vg$ commutes with the projection operator, since $\vg_{\vx_0}$ does. Restricting $\vg$ to $(-1,1)^n$ and setting $U_0:=\vg((-1,1)^n)$ yields a map $\vg$ possessing all of the properties listed in Corollary~\ref{C:LowDimSmoothMan}(\ref{A:ContDiffg}), with $K_0:=K_{\vx_0}$. With $b,\theta$ satisfying the appropriate constraints and $\Psi_0:U_0\bs\bnd_0\twoheadrightarrow U_0\bs\bnd_0$ defined as in Corollary~\ref{C:LowDimSmoothMan}, we conclude that
\[
  \lint{\Psi_0^{-1}(\vy)}\frac{1}{d_{\bnd_0}(\vx)^\beta|\Psi_0(\vx)|}\dd\vx
  \le
  \frac{\nu}{d_{\bnd_0}(\vy)^\beta},
  \frl\vy\in U_0.
\]
Define $U'_{\vx_0}:=\vx_0+\frac{1}{R_0}U'_{\vx_0}\subseteq U_{\vx_0}$ and $\Psi_{\vx_0}:U'_{\vx_0}\bs\bnd\twoheadrightarrow U'_{\vx_0}\bs\bnd$ by $\Psi_{\vx_0}(\vx):=\vx_0+\frac{1}{R_0}\Psi_0(R_0(\vx-\vx_0))$. As $\ve{0}\in U_0$, we find $\vx_0\in U'_{\vx_0}$. Since
\[
  d_\bnd(\vx)\le d_{\partial\dom}(\vx),
  \quad
  d_\bnd(\vx)=\frac{1}{R_0}d_{\bnd_0}(R_0(\vx-\vx_0)),
  \nd
  \Psi_0(\vx)\subseteq\bll_{d_{\bnd_0}(\vx)}(\vx),
\]
we see that $\Psi_{\vx_0}\subseteq\bll_{d_\bnd(\vx)}(\vx)\subseteq\dom\bs\bnd$, for all $\vx\in U'_{\vx_0}$. Moreover
\begin{equation}\label{E:Psi_x0Containment}
  F_{\vx_0}
  :=\bigcup_{\vx\in U'_{\vx_0}\bs\bnd}
  \Psi_{\vx_0}(\vx)\subseteq U'_{\vx_0}\subseteq\dom\bs\bnd.
\end{equation}
Put $\gamma_{\vx_0}:=\lp R_0/d_\bnd\rp^\beta\in L^+(\ren\bs\bnd)$, so $\gamma_{\vx_0}\ge 1$ on $\ren\bs\bnd$. The change of variables $\vx\mapsto R_0(\vx-\vx_0)$ yields
\[
  \lint{\Psi_{\vx_0}^{-1}(\vy)}
    \frac{\gamma_{\vx_0}(\vx)}{|\Psi_{\vx_0}(\vx)|}\dd\vx
  =R_0^\beta\nsss\lint{\Psi_0^{-1}(R_0(\vy-\vx_0))}\nsss\nss
    \frac{1}{d_{\bnd_0}(\vx)^\beta|\Psi_0(\vx)|}\dd\vx
  \le
  \nu\gamma_{\vx_0}(\vy),
  \text{ for all }\vy\in U'_{\vx_0}.
\]
Before applying the Poincar\'{e} Inequality II on $U'_{\vx_0}$, we need to verify~\eqref{E:uAssumMainPoinc}. Using Remark~\ref{R:SmoothMan}(\ref{R:PsiMeasBound}), there exists $C'_{\vx_0}=C'_{\vx_0}(m,n,b,\theta,\nu,K_{\vx_0})$ such that
\begin{equation}\label{E:C'_x0Def}
  |\Psi_{\vx_0}(\vx)|^{-1}
  \le
  R_0^n|\Psi_0(\vx)|^{-1}
  \le
  C'_{\vx_0}\cdot\lp\frac{R_0}{b\cdot d_{\bnd_0}(R_0(\vx-\vx_0))}\rp^n
  =C'_{\vx_0}\cdot\lp\frac{R_0^2}{b\cdot d_{\bnd}(\vx)}\rp^n.
\end{equation}
It follows that
\[
  \lint{U'_{\vx_0}\bs\bnd}\lp\flint{\Psi_{\vx_0}(\vx)}|u(\vy)|\dd\vy\rp^p
  \gamma_{\vx_0}(\vx)\dd\vx
  \le
  C'_{\vx_0}\cdot\lp\frac{R_0^{2n+\beta}}{b^n}\rp
  \lint{\dom\bs\bnd}\frac{1}{d_\bnd(\vx)^\beta}
    \lp\flint{\bll_{d_\bnd}(\vx)}|u(\vy)|\dd\vy\rp^p
  \dd\vx<\infty.
\]
At this point, we fix the values of $b$, $\theta$, and $\nu$. Recalling~\eqref{E:Psi_x0Containment},~\eqref{E:C'_x0Def}, and the definitions of $R_0$ and $\gamma_{\vx_0}$, we may apply the Poincar\'{e} Inequality II, with $A_0=\{0\}$, and obtain
\begin{align*}
  \lint{U'_{\vx_0}}|u(\vx)|^p\dd\vx
  &\le
  C(\nu,p)\lint{U'_{\vx_0}\bs\bnd}\flint{\Psi_{\vx_0}(\vx)}|u(\vy)-u(\vx)|^p
    \gamma_{\vx_0}(\vx)\dd\vy\dd\vx\\
  &\le
  \underbrace{2C'_{\vx_0}\lp\frac{K_{\vx_0}^{2(n+\beta)}}{b^n}\rp\cdot C(\nu,p)}
    _{:=C''_{\vx_0}}\,
  \lint{U'_{\vx_0}\bs\bnd}\flint{U\cap\bll_{d_\bnd(\vx)}(\vx)}
    \nss\frac{|u(\vy)-u(\vx)|^p}{d_\bnd(\vx)^\beta}\dd\vy\dd\vx.
\end{align*}
The compactness of $\bnd$ allows us to extract a finite subcover $\{U'_{\vx_\ell}\}_{\ell=1}^{\ell_0}$ of $\{U'_{\vx_0}\}_{\vx_0\in\bnd}$. The result follows with $U:=\bigcup_{\ell=1}^{\ell_0}U'_{\vx_\ell}$ and $C_\bnd:=\ell_0\cdot\max_{\ell=1,\dots,\ell_0}C''_{\vx_\ell}$.
\end{proof}

Our final example extends the previous one to the entire set $\dom$.
\begin{example}\label{Ex:ManExample2}
Suppose that $\dom\subseteq\ren$ is an open convex set. Let $\bnd\subset\ov{\dom}$ be a $0\le m\le n-1$ dimensional a compact $\cnt^2$-manifold without boundary, and let $\beta>n-m$ be given. With $\delta_0>0$, define $\delta\in\cnt(\ren)$ and $\Psi:\dom\bs\bnd\twoheadrightarrow\dom\bs\bnd$ by
\[
  \delta(\vx):=\min\{\delta_0,d_\bnd(\vx)\}
  \nd
  \Psi(\vx):=\dom\cap\bll_{\delta(\vx)}(\vx).
\]
There exists $C_\bnd<\infty$ such that
\begin{equation}\label{E:ManifoldDirichletPoinc2}
  \lint{\dom}|u(\vx)|^p\dd\vx
  \le C_\bnd\lint{\dom\bs\bnd}\flint{\Psi(\vx)}
    \frac{|u(\vy)-u(\vx)|^p}{\delta(\vx)^\beta}\dd\vz\dd\vx,
\end{equation}
for any $u\in L^1(\ren)$ satisfying
\[
  \lint{\dom\bs\bnd}\frac{1}{\delta(\vx)^\beta}
    \lp\flint{\Psi(\vx)}|u(\vy)|\dd\vy\rp^p\dd\vx<\infty.
\]
\end{example}
\begin{remark}
\begin{enumerate}[(a)]
\item The statements in Remark~\ref{R:ManExample} are also applicable in the setting of this example.
\item We only require $\bnd$ to be a $\cnt^2$-manifold. The set $\partial\dom\bs\bnd$ may be less regular.
\item The convexity requirement for $\dom$ may be dropped provided there is a transformation, such a $\cnt^2$-diffeomorphism, of $\dom$ into a convex set.
\end{enumerate}
\end{remark}
\begin{proof}
As in Example~\ref{Ex:LowDimCompMan}, we obtain a $0<\nu<1$, $\{\vx_\ell\}_{\ell=1}^{\ell_0}\subseteq\bnd$, open sets $\{U'_\ell\}_{\ell=1}^{\ell_0}$, and $\{\Psi_{\vx_\ell}:U'_\ell\bs\bnd\twoheadrightarrow U'_\ell\bs\bnd\}_{\ell=1}^{\ell_0}$ such that
\begin{itemize}
\item[$\bullet$] $\vx_\ell\subset U_\ell\subseteq\dom$ and $\bnd\subset\ov{\bigcup_{\ell=1}^{\ell_0}U_\ell}$,
\item[$\bullet$] $\Psi_{\vx_\ell}(\vx)\subseteq\bll_{d_\bnd(\vx)}(\vx)$, for all $\vx\in U'_\ell$,
\item[$\bullet$] $\int_{\Psi^{-1}_{\vx_\ell}(\vy)}
  d_\bnd(\vx)^{-\beta}|\Psi_{\vx_\ell}(\vx)|^{-1}\dd\vx\le\nu d_\bnd(\vx)^{-\beta}$, for all $\vy\in U'_\ell$.
\end{itemize}
Without loss of generality, we may assume that $d_\bnd(\vx)\le 1$, for all $\vx\in U:=\bigcup_{\ell=1}^{\ell_0}U'_{\vx_\ell}$, so $d_\bnd^{-\beta}\ge 1$ on $U$. For each $\ell\in\nats$, let $\dom_\ell\subseteq\dom$ the open and convex Voronoi set in $\dom$ associated with $\vx_\ell$. Thus, for each $\vx\in\dom_\ell$, we find $\|\vx-\vx_\ell\|_{\ren}<\|\vx-\vx_k\|_{\ren}$, for all $k\neq\ell$. The collection $\{\dom_\ell\}_{\ell=1}^{\ell_0}$ constitutes an open partition of $\dom$. We will use the Poincar\'{e} Inequality II on each $\dom_\ell$.

To this end, fix $\ell=1,\dots,\ell_0$. Without loss of generality, we assume that $\vx_\ell=\ve{0}$. Using the notation of Theorem~\ref{T:MainPoinc2}, put $U=U'_\ell\cap\dom_\ell$, so $V=\dom_\ell\bs\ov{U}$, $\bnd\cap\dom_\ell\subset\ov{U}$, and $E=\ov{U}\bs\bnd$. Put
\[
  \vep:=\min\{1,\delta_0,\smfrac{1}{2}\inf_{\vx\in V}d_\bnd(\vx)\}>0.
\]
Thus $\Phi:=\bll_\vep(\ve{0})\subset\bll_{2\vep}(\ve{0})\subseteq U$. Define $\sigma\in\cnt^1((0,\infty))$ and $\vz,\vy\in L^\infty(\ren\times\Phi;\ren)$ by
\[
  \sigma(\tau):=\frac{\vep}{\tau+\vep},\quad
  \vz(\vx,\vzet):=\sigma(\|\vx\|_{\ren})\lp\vzet-\vx\rp\chi_V(\vx),
  \nd
  \vy(\vx,\vzet):=\vx+\vz(\vx,\vzet).
\]
Since $\|\vx\|_{\ren}-\vep<\|\vzet-\vx\|_{\ren}<\|\vx\|_{\ren}+\vep$, we find $\vz\in\bll_\vep(\ve{0})\subseteq\bll_{\delta(\vx)}(\ve{0})$, for all $(\vx,\vzet)\in V\times\Phi$. Moreover, we must have $\sigma(\|\vx\|_{\ren})<\frac{1}{3}$, since $\|\vx\|_{\ren}>2\vep$. This and the convexity of $\dom_\ell$ implies $\vy(V,\vzet)\subseteq U\cup V$, for each $\vzet\in\Phi$. In fact, if $\vx\in V$, then
\begin{align*}
  d_\bnd(\vy(\vx,\vzet)
  &\ge
  d_\bnd(\vx)-\|\vz(\vx,\vzet)\|
  =d_\bnd(\vx)-\frac{\vep\|\vzet-\vx\|_{\ren}}{\|\vzet-\vx\|_{\ren}+\vep}\\
  &>
  d_\bnd(\vx)-\vep\ge\frac{1}{2}d_\bnd(\vx)>\vep.
\end{align*}
It follows that $\dist(\vy^k(V\times\{\vzet\}),\bnd)>\vep$, for all $k\in\whls$ and $\vzet\in\Phi$. The maps $\vy$ and $\vz$ are clearly differentiable throughout $V$. Using $\otimes$ for a tensor product, we compute
\[
  \partial_{\vx}\vy(\vx,\vzet)
  =
  (1-\sigma(\|\vx\|_{\ren}))\tI^n
  +\frac{\sigma'(\|\vx\|_{\ren})}{\|\vx\|_{\ren}}\vx\otimes(\vzet-\vx).
\]
Since the second term is a rank-one matrix, we deduce that
\[
  \det\partial_{\vx}\vy(\vx,\vzet)
  =(1-\sigma(\|\vx\|_{\ren}))^n
    \lp1+\frac{\sigma'(\|\vx\|_{\ren})}{\|\vx\|_{\ren}(1-\sigma(\|\vx\|_{\ren}))}
    \vx\bdot(\vzet-\vx)\rp.
\]
Now $|\sigma'(\|\vx\|_{\ren})|=\frac{\vep}{(\|\vx\|_{\ren}+\vep)^2}$ and $(1-\sigma(\|\vx\|_{\ren}))=\frac{\|\vx\|_{\ren}}{\|\vx\|_{\ren}+\vep}$, so
\[
  \left|\frac{\sigma'(\|\vx\|_{\ren})}{\|\vx\|_{\ren}(1-\sigma(\|\vx\|_{\ren}))}
    \vx\bdot(\vzet-\vx)\right|
  \le
  \frac{\vep}{\|\vx\|_{\ren}}<\frac{1}{2}.
\]
Consequently,
\[
  \det\partial_{\vx}\vy(\vx,\vzet)>\lp\frac{1}{3}\rp^n.
\]
For each $\vx\in V$, we have the bound $\sigma(\|\vx\|_{\ren})<\frac{\vep}{\diam(V)+\vep}$, and hence
\[
  \det\partial_{\vzet}\vz(\vx,\vzet)=\sigma(\|\vx\|_{\ren})^n
  >\lp\frac{\vep}{\diam(V)+\vep}\rp^n.
\]
Next, we argue that $U$ essentially absorbs $V$ in parameterized dynamical system generated by $\vy$ and that $\vx\mapsto\vy(\vx,\vzet)$ is injective on $V$. Let $\vzet\in\Phi$ and $\vx_0\in V$ be given. For each $k\in\nats$, put $x_k:=\vy(\vx_{k-1},\vzet)$, so $\{\vx_k\}_{k=0}^\infty$ is the forward orbit of $\vx_0$. We define $\wh{\vx}_k:=\vx_k-\vzet$. It follows that, for each $k\in\whls$, if $\vx_k\in V$, then $\{\vx_\ell\}_{\ell=0}^k\subset V$ and
\begin{align*}
  &\vx_{k+1}=\wh{\vx}_k+\vzet-\sigma(\|\vx_k\|_{\ren})\wh{\vx}_k
  =(1-\sigma(\|\vx_k\|_{\ren}))\wh{\vx}_k+\vzet\\
  &\Longrightarrow
  \wh{\vx}_{k+1}=(1-\sigma(\|\vx_k\|_{\ren}))\wh{\vx}_k
  =\prod_{\ell=0}^k(1-\sigma(\|\vx_\ell\|_{\ren}))\wh{\vx}_0.
\end{align*}
As noted above, we $\sigma(\|\vx\|_{\ren})<\frac{\vep}{\diam(V)+\vep}$, for all $\vx\in V$. It follows that
\[
  \|\vx_{k+1}-\vzet\|_{\ren}
  =\|\wh{\vx}_{k+1}\|_{\ren}\le\lp\frac{\diam(V)}{\diam(V)+\vep}\rp^k\diam(V).
\]
Select $k_0\in\nats$ such that $\lp\frac{\diam(V)}{\diam(V)+\vep}\rp^{k_0-1}\diam(V)<\vep$. Since $\vzet\in\bll_\vep(\ve{0})$, we deduce that $\vx_{k_0}\in\bll_{2\vep}(\ve{0})\subseteq U$. We conclude that $U$ absorbs $V$, since $k_0$ is independent of $\vx_0\in V$. In fact, the absorption index is at most $k_0$ for all $\vzet\in\Phi$. To show injectivity, suppose that there exists $\vx_1,\vx_2\in V$ such that $\vx_1\neq\vx_2$ and $\vy(\vx_1,\vzet)=\vy(\vx_2,\vzet)$. Using the notation already introduced, we may assume that $\|\wh{\vx}_1\|_{\ren}<\|\wh{\vx}_2\|_{\ren}$. Then $(1-\sigma(\vx_1))<(1-\sigma(\vx_2))$
\[
  (1-\sigma(\|\vx_1\|_{\ren}))\wh{\vx}_1=(1-\sigma(\|\vx_2\|_{\ren}))\wh{\vx}_2
  \Longrightarrow
  \wh{\vx}_1=\frac{1-\sigma(\|\vx_2\|_{\ren})}{1-\sigma(\|\vx_1\|_{\ren})}
    \wh{\vx}_2.
\]
This implies $\|\wh{\vx}_1\|_{\ren}>\|\wh{\vx}_2\|_{\ren}$, which is a contradiction. Therefore the map $\vx\mapsto\vy(\vx,\vzet)$ is injective on $V$.

We are now in position to apply Theorem~\ref{T:MainPoinc2}. The set of absorption indices $A$ satisfies $A\subseteq\{1,\dots,k_0\}$, with $k_0$ identified above. Let $\Psi'_\ell:\dom_\ell\twoheadrightarrow U'_\ell\cup V$ be defined by
\[
  \Psi'_\ell(\vx):=\lc\begin{array}{ll}
    \Psi_{\vx_\ell}(\vx), & \vx\in \ov{U},\\
    \vx+\vz(\{\vx\}\times\Phi), & \vx\in V,
  \end{array}\right.
\]
so $\Psi'_\ell(\vx)\subseteq\dom\cap\bll_\vep(\vx)\subseteq\dom\cap\bll_{\delta(\vx)}(\vx)$ and $|\Psi'_\ell(\vx)|\ge\sigma(\vx)^2|\Phi|>\lp\frac{\vep^2}{\diam(V)}\rp^n|\bll_1|$, for all $\vx\in V$. For each $\alpha\in A\cup\{0\}$, we may select $M_{E,\alpha},M_V<\infty$ sufficiently large so that assumption~(\ref{A:gammaPoinc}) is satisfied. Recalling~\eqref{E:C'_x0Def}, Theorem~\ref{T:MainPoinc2} yields
\[
  \lint{\dom_\ell}|u(\vx)|^p\dd\vx
  \le
  C_\ell\lint{\dom_\ell\bs\bnd}\flint{\Psi'_\ell(\vx)}
    \frac{|u(\vy)-u(\vx)|^p}{\delta(\vx)^\beta}\dd\vz\dd\vx
  \le
  C'_\ell\lint{\dom_\ell\bs\bnd}\flint{\dom\cap\bll_{\delta(\vx)}(\vx)}
    \frac{|u(\vy)-u(\vx)|^p}{\delta(\vx)^\beta}\dd\vz\dd\vx.
\]
Put $C_\bnd:=\max_{\ell=1,\dots,\ell_0}C'_\ell$. Since $\{\dom_\ell\}_{\ell=1}^{\ell_0}$ is an open partition of $\dom$, we may sum the above inequality over $\ell=1,\dots,\ell_0$ to obtain~\eqref{E:ManifoldDirichletPoinc2}.
\end{proof}
\bibliographystyle{plain}
\bibliography{PoincareIneqBib}

\end{document}